\definecolor{Red}{cmyk}{0,1,1,0}
\definecolor{Blue}{cmyk}{1,1,0,0}
\newtheorem*{rep@theorem}{\rep@title}
\newcommand{\newreptheorem}[2]{%
	\newenvironment{rep#1}[1]{%
		\def\rep@title{#2 \ref{##1}}%
		\begin{rep@theorem}}%
		{\end{rep@theorem}}}
\theoremstyle{plain}
\newtheorem{theorem}{Theorem}[section]
\newtheorem{corollary}[theorem]{Corollary}
\newtheorem{proposition}[theorem]{Proposition}
\newtheorem{lemma}[theorem]{Lemma}
\theoremstyle{definition}
\newtheorem{definition}[theorem]{Definition}
\newtheorem{remark}[theorem]{Remark}
\newtheorem{example}[theorem]{Example}
\newcommand{\hooklongrightarrow}{\lhook\joinrel\longrightarrow}
\DeclareMathOperator*{\esssup}{ess\,sup}
\title[On the Dimension of the Space of Harmonic Functions]
{On the Dimension of the Space of Harmonic Functions on Transitive Shift Spaces}
\author[L. Cioletti]{L. Cioletti}
\address{Department of Mathematics, Universidade de Bras\'ilia, 70910-900, Bras\'ilia, Brazil}
\email{cioletti@mat.unb.br}
\author[L. Melo]{L. Melo}
\address{Department of Mathematics, Universidade de Bras\'ilia, 70910-900, Bras\'ilia, Brazil}
\email{melo@mat.unb.br}
\author[R. Ruviaro]{R. Ruviaro}
\address{Department of Mathematics, Universidade de Bras\'ilia, 70910-900, Bras\'ilia, Brazil}
\email{ruviaro@mat.unb.br}
\author[E. A. Silva]{E. A. Silva}
\address{Department of Mathematics, Universidade de Bras\'ilia, 70910-900, Bras\'ilia, Brazil}
\email{e.a.silva@mat.unb.br}
\thanks{This work was partially supported by the Coordena\c c\~ao de
	Aperfei\c coamento de Pessoal de N\'ivel Superior - Brasil (CAPES) - Finance Code 001.
	L. Cioletti, R. Ruviaro and E.~A. Silva acknowledge financial support by FAP-DF. L. Cioletti
	is supported by CNPq through project PQ 313217/2018-1. L. Melo acknowledges
  financial support by Minist\'erio da Economia - Brasil.
}
\subjclass[2020]{Primary 37D35, 28Dxx; Secondary 60J05, 60F05}
\keywords{Conformal Measures, Functional Central Limit Theorem, Harmonic Functions, Markov Processes, Phase Transition, Poisson's Equation, Ruelle-Perron-Frobenius Theorem, Thermodynamic Formalism}
\date{}
\begin{document}

\begin{abstract}
	In this paper, we show a new relation between phase transition 
	in one-dimensional Statistical Mechanics and the multiplicity 
	of the dimension of the space of harmonic functions for an extension 
	of the classical transfer operator. 
	We accomplish this by extending the classical Ruelle-Perron-Frobenius 
	theory to the realm of low regular potentials. This is done by 
	establishing finer properties of the associated conformal measures 
	and thoroughly developing a method to obtain information on the 
	maximal eigenspace of a suitably constructed family of Markov Processes. 
	Our results are valid in the setting of finite and infinite alphabets. 
	Several new applications are given to illustrate the theory. 
	For example, we determine the support of a large class of 
	equilibrium states associated with low regular potentials, 
	including ones allowing phase transition. 
	Additionally, we prove a version of the Functional Central Limit Theorem for 
	equilibrium states. A remarkable aspect of this result is that 
	it does not require the spectral gap property of the associated transfer operator. 
	It is valid for long-range spins systems that might not be positively correlated and
	for non-local observables. 
\end{abstract}
\vspace*{-2cm}
\maketitle

\setlength{\marginparwidth}{3.8cm}
\vspace*{-1cm}

\tableofcontents

\section{Introduction}

This paper is about the Perron-Frobenius eigenvector space of a class of transfer 
operators arising in Ergodic Theory and Equilibrium Statistical Mechanics. 
For this class of operators, defined by a general continuous potential, 
our main result relates the dimension of this subspace with the number 
of extreme conformal measures. Consequently, we show that the geometric multiplicity 
of this eigenvector space can only be greater than one if a 
first-order phase transition takes place, as in 
Dobrushin \cite{MR0250631} and Lanford and Ruelle \cite{MR256687}.

In the one-sided full shift, 
the Perron-Frobenius eigenvector space simplicity is, in general, linked to the absence of phase transition in Statistical Mechanics \cite{MR0234697}. 
Such results have been proven in several other works by exploring 
the regularity properties of the potential associated with the transfer operator, 
see \cite{MR1793194,MR2423393,MR3377291,MR1085356,MR2342978,MR0466493}. 
To a certain extent, it is fair to say that,  to date, the techniques employed 
to study this problem boil down to establishing the pre-compactness 
of a particular family of functions, usually obtained by a variation of the 
Arzel\`a-Ascoli Theorem. 
Here we are mostly interested in the case where 
the potential is continuous but does not have these regularity properties. 
In such cases, little is known about the Perron-Frobenius eigenvector space. 
The quest for a theory that can handle such a general class of potentials 
is the primary motivation of the present article. 
To develop this theory, we introduce a new set of tools that help us 
understand the behavior of the maximal spectral data of transfer operators 
associated with low regular potentials and their relation with the phase transition phenomenon.

A function belonging to this Perron-Frobenius eigenvector space 
is sometimes called a harmonic function \cite{MR1078079,MR1837681,MR4043220}. 
In the present article, this terminology is further motivated 
by the conclusion of our Theorem \ref{teo-positivo}. 
Roughly speaking, it says that a function on this space that vanishes on 
a set of non-trivial measure has to be identically zero. 

In Ergodic Theory, the spectral analysis of transfer operators is a useful tool to understand the asymptotic behavior of complicated non-linear dynamical systems. 
In Statistical Mechanics, these operators play a major role in computing 
the asymptotic behavior of partition functions of lattice spin systems. 
Consequently, the transfer operator carries relevant information on either 
the presence or absence of phase transitions when the temperature varies. 
Since the literature on both subjects and their relationship is vast, 
we refer the reader to 
\cite{MR1793194,MR3793614,MR2423393,MR4007162,MR1014246,MR3852182,MR3701541,
	MauldinUrbanski:2003,MR576928,MR1085356,MR0234697,MR2129258,MR1738951,MR0233038,
	MR412389,MR0466493,MR1783787,MR2342978} 
and references therein for more details. 

In symbolic dynamics, which is the context of this work, the transfer 
operator $\mathscr{L}$ is usually constructed from a continuous potential $f$, 
which is simply a continuous function from some subset of a metrizable product 
space $X$ to $\mathbb{R}$. Generally speaking, in Ergodic Theory the potential $f$ 
is responsible for encoding information on the local rates of expansion 
or contraction of the dynamical system.  In Statistical Mechanics, it is related to the 
intensity of the micro-local interactions between particles. 
In both settings, the transfer operator helps us understand the passage 
from local to global properties, such as Gibbs Measures, topological pressure, 
asymptotics of Birkhoff sums, and so on. 

\break

\subsection{Notation and Statements of the Main Results}\label{sec:preliminaries}

In this subsection, we set up some necessary notations 
before presenting the statements of all our main results. 
Along with the summary of this paper's main results, we  
remark on the hypotheses and provide comparisons with similar results in the literature. 

\medskip

Let $(E,d_{E})$ be a compact metric space,
$X=E^{\mathbb{N}}$ the product space equipped with
a metric $d$ inducing the product topology, and $\mathscr{B}(X)$
the Borel sigma-algebra on $X$.
As usual,  $(C(X),\|\cdot\|_{\infty})$ denotes the Banach space of all
real-valued continuous functions on $X$, endowed with the supremum norm.
The set of all extended-real-valued $\mathscr{B}(X)$-measurable
functions on $X$ is denoted here by
$M(X,\mathscr{B}(X))$.
For an arbitrary metric space $Y$ let us denote by
$\mathscr{M}_{s}(Y), \mathscr{M}(Y)$, and $\mathscr{M}_{1}(Y)$ the space
of all finite Borel signed, positive and probability measures on $Y$, respectively.

The class of transfer operators we deal with in this paper are
those constructed in the following way.
We fix a continuous potential $f:X\to\mathbb{R}$, and an \textit{a priori} Borel probability
measure $p\in \mathscr{M}_{1}(E)$. The operator $\mathscr{L}:C(X)\to C(X)$ sends
a continuous function $\varphi$ to a continuous function
$\mathscr{L}\varphi$, which is defined for any $x\in X$, as 
follows:
\begin{align}\label{def-Lf}
	\mathscr{L}\varphi(x)
	\equiv
	\int_{E} \exp(f(ax))\varphi(ax)\, dp(a),\quad \text{where}\ ax\equiv (a,x_1,x_2,\ldots).
\end{align}

\medskip

Tychonof's theorem implies that $X$ is a compact space.
And, by the Riesz-Markov representation theorem,
the Banach transpose of the transfer operator,
denoted by  $\mathscr{L}^{*}$, can be identified with
a bounded  linear operator acting on $\mathscr{M}_{s}(X)$.
The transpose operator, $\mathscr{L}^{*}$, is a positive operator and takes the cone
of all finite Borel positive measures to itself.
Furthermore, by using the compactness of $\mathscr{M}_{1}(X)$
with respect to  the weak-$*$-topology
and a classical argument based on the Tychonov-Schauder Theorem,
we can show that the
set of eigen-probabilities associated with the spectral radius
of $\mathscr{L}$, i.e.,
\[
\mathscr{G}^{*}\equiv \{\nu\in \mathscr{M}_{1}(X) :  \mathscr{L}^*\nu =\rho(\mathscr{L})\nu \}
\]
is always non-empty (see \cite{CLS20}).

Denker and Urbanski \cite{MR1014246} 
call a probability measure $\nu\in\mathscr{G}^*$ a 
$\rho(\mathscr{L}_{f})$-conformal measure. Here, we follow a similar terminology
except measure $\nu\in\mathscr{G}^*$ is simply called a conformal measure.   
The underlying dynamics here
is always the left shift-map 
$\sigma:X\to X$ given by $(x_1,x_2,x_3,\ldots)\longmapsto (x_2,x_3,x_4,\ldots)$.

We work with the extension of the transfer operator \eqref{def-Lf} to  
the Lebesgue space $L^{1}(\nu)\equiv L^{1}(X,\mathscr{B}(X),\nu)$, 
where $\nu\in \mathscr{G}^{*}$.
We need to be careful when talking about
the $L^1(\nu)$-extension of the transfer operator in \eqref{def-Lf}. 
In our setting, where uncountable alphabets are allowed, 
there is generally no guarantee that $L^{1}(\nu)$ is larger than 
$C(X)$. More precisely, depending on the support of $\nu$, notation $\mathrm{supp}(\nu)$, 
there might be no linear embedding $\pi:C(X)\hooklongrightarrow L^1(\nu)$. 
For instance, if $E$ is an uncountable set and the support 
of the {a priori} measure ($\mathrm{supp}(p)$) is a finite
set, such embedding does not exist. We deal with this technical issue in the appendix
by proving that if the {a priori} measure $p$ is fully supported, then any $\nu\in \mathscr{G}^{*}$ 
is fully supported. This, in turn, implies the existence of a linear embedding $\pi:C(X)\hooklongrightarrow L^1(\nu)$.  

Our main results are presented in Section \ref{sec:markov}. 
There, we consider the appropriate assumptions to construct an extension $\mathbb{L}$
of our original transfer operator $\mathscr{L}$. By taking a suitable normalization,  
the extension $\mathbb{L}$ has operator norm $1$ and
can be viewed as a Markov process. 
Next, we develop the basic theory for such processes, taking into account our 
Markov operators' particularities. 
Our first remarkable result is the following theorem:

\begin{reptheorem}{teo-positivo}
	Let $f$ be any continuous potential and $\nu\in \mathrm{ex}(\mathscr{G}^{*})$ (extreme point).
	If $u\geqslant 0$ is a harmonic function for $\mathbb{L}:L^1(\nu)\to L^1(\nu)$,
	then $u>0$ $\nu$-a.e.
\end{reptheorem}

We apply this theorem to get the support of equilibrium measures 
for low regular potentials in Subsection \ref{sec-app-EqSt} under mild assumptions. 
It states that if we can construct a harmonic function that is positive in 
at least one set of positive measures, then this property has to hold globally. 
This result opens doors to new local techniques to construct 
fully supported equilibrium states in a partially expanding setting. 

We later use the above theorem to prove the following result:

\begin{reptheorem}{teo-unica}
	If $\nu$ is an extreme point in $\mathscr{G}^{*}$
	then the space of harmonic functions (SHF) for the operator $\mathbb{L}:L^1(\nu) \rightarrow L^1(\nu)$,
	has dimension at most one.
\end{reptheorem}

The above result has two fascinating consequences. 
The first one is an application
on the study of phase transition 
in Equilibrium Statistical Mechanics, presented in 
Subsection \ref{sec-phase-transition}. 
The second one is given by Corollary \ref{cor-dim-continuas}. 

At the end of Section \ref{sec:markov}, we present the theorem 
that is one of the main results of this paper:

\begin{reptheorem}{th:eigenspace-dimension}
	Let $f$ be an arbitrary continuous potential, $p:\mathscr{B}(E)\to[0,1]$ a fully supported probability
	measure,
	and  $m\in\mathscr{G}^{*}$ an arbitrary conformal measure.
	Thus, the eigenspace of	$\mathbb{L}:L^1(m)\to L^1(m)$ associated to its
	spectral radius must have a dimension no bigger than
	the cardinality of the set of extreme points in $\mathscr{G}^{*}$.
\end{reptheorem}

The proof of Theorem \ref{th:eigenspace-dimension} reveals that SHF's dimension is 
maximized when the conformal measure $m$ is taken as the barycenter 
of $\mathscr{G}^*$ (in the sense of Choquet \cite{MR80720}). 
We also remark that the result of Theorem \ref{th:eigenspace-dimension} is optimal in the 
sense that there is a potential $f$ for which the space of harmonic function is trivial
and the upper bound is saturated, as shown in the examples of Sections \ref{sec-Curie-Weiss} and \ref{sec-g-measures}.

The mechanism behind the multiplicity of the space of harmonic
functions here is different from all the ones described above. 
Let us expand on this comment. 
The comparison with non-transitive systems is obvious.
However, in the other cases, where the dynamics are forward transitive, 
the comparison reveals the new phenomenon discovered here. 

Note that the setting in \cite{MR2887917} is analogous to ours. 
There, the transfer operator, up to switching from the interval 
$[0,1]$ to the symbolic space $X=\{0,1\}^{\mathbb{N}}$, is given by 
\begin{equation*}
	\mathscr{L}(\psi)(x)= \sum_{a\in E}\psi(ax)\varphi(ax), \quad \forall x \in X,
\end{equation*}
where $\psi$ is fixed and plays the role of the potential, 
and $\varphi$ is a continuous test function, $E=\{0,1\}$. 
As observed earlier, the main difference from our context is that $\psi$ is allowed to have zeros.
This is crucial in \cite{MR2887917}, since the dimension of 
the SHF for $\mathscr{L}$ is bounded by the number of disjoint closed subsets of $X$
where transitive subsystems can be defined.
In our case, however, $\psi=\exp(f)$ and is always positive.
Moreover, our underlying dynamical system is transitive and the space
of continuous harmonic functions for $\mathscr{L}:C(X)\to C(X)$,
has dimension at most one, as proved in Corollary \ref{cor-dim-continuas}.

However, we can make an analogy between the operator $\mathscr{L}$ in \cite{MR2887917} - 
acting on the space of continuous functions - and the extension $\mathbb{L}$ to $L^1(\nu)$
of our transfer operator. 
Both have the dimension of the SHF's bounded 
by the number of disjoint sets that are invariant
by appropriate dynamics. In \cite{MR2887917}, the dynamics is the doubling mapping. 
Here, the dynamics is not the underlying dynamics, 
the left shift mapping, but the dynamics given by 
the homogeneous discrete time Markov process defined on the phase space $X$. 
In our case, each of the invariant sets have full measure with respect 
to the extreme points in $\mathscr{G}^*$,
which implies that they are dense in $X$, since the conformal measures are
fully supported (Theorem \ref{Teo-EP-fully-supp}).
This distinction is essential, as the SHF's
for $\mathscr{L}$ are unaffected by phase transitions.
The opposite is true for the extension $\mathbb{L}$, since its
SHF is spanned by independent harmonic functions supported on
disjoint invariant sets, when they exist.
This phenomenon is exemplified by the discussion in Subsection \ref{sec-Curie-Weiss} and \ref{sec-g-measures}, 
where the multiplicity of the SHF emerges from 
finer properties of the associated Markov process.

\medskip

The reason for developing this theory for the full shift 
is not just for the sake of mathematical generalization.
In fact, the full shift is the only one-sided shift suitable 
for describing one-dimensional one-sided spin systems on Equilibrium Statistical Mechanics
as shown in \cite{CLS20}. Also, the techniques developed here 
provide new insights into the physics of the phase transition problem, 
and identify new mathematical phenomenon, which is the multiplicity of the 
Perron-Frobenius eigenvector space in the forward transitive setting.
Additionally, this theory provides further applications of Thermodynamic Formalism and  
is a simpler description of the phase transition in one-dimensional Statistical Mechanics through 
the dynamics (not the underlying dynamics $\sigma:X\to X$) 
determined by the Markov process introduced in Section \ref{sec:markov}.   

\medskip   

Continuing with our main results, we mention the ones appearing 
in the Applications and Examples section. More precisely,
in Subsection \ref{sec-conf-full-supp}, we prove that any conformal measure 
in $\mathscr{G}^{*}$ has full support. In the subsequent subsection, 
we use this result to study the support of equilibrium states for a 
very general class of potentials and conclude that they are also fully supported.
As far as we know, this is the first general result for equilibrium states that is
valid even in cases where the potential allows for a first-order phase transition. 
We also remark that the needing of a fully supported
conformal measure is present in some previous works related to 
transfer operators defined with {a priori} measure on uncountable compact 
metric alphabets as in \cite{CLS20,lopes2020information,MR3377291}. 
Nevertheless, in these papers, 
the authors did not directly address this question; instead, they assumed 
this property without any further discussion on its validity. 

\bigskip

In Subsection \ref{FCLT}, we use the extension of the transfer operator to $L^2(\nu)$, 
associated with a normalized potential, to show the validity of a 
version of the Functional Central Limit Theorem (FCLT)
for observables solving the Poisson equation. More precisely, we prove the following 
theorem:
\begin{reptheorem}{functionalclt}
Let $P$ be the transfer operator induced by the extension $\mathbb{L}$
associated with a continuous
and normalized potential and $\mu\in \mathscr{G}^{*}$.
Let $\phi:X\to \mathbb{R}$ be a non-constant observable  in $L^{2}(\mu)$
satisfying $\mu(\phi)=0$. If
there exists a solution $\upsilon\in L^{2}(\mu)$ for Poisson's equation $(I-\mathbb{L})\upsilon=\phi$,
then the stochastic process $Y_{n}(t)$, given by
\begin{align*}
Y_{n}(t)=\displaystyle\frac{1}{\varrho\sqrt{n}}\sum_{j=0}^{[nt]}\phi\circ \sigma^{j}, \qquad 0\leqslant t<\infty,
\end{align*}
where $\varrho=\mu(\upsilon^2)-\mu(P\upsilon^2)$, converges in distribution to the Wiener measure in
$D[0,\infty)$.
\end{reptheorem}

In Example \ref{ex-no-spe-gap}, we consider a continuous but not H\"older continuous potential $f$.
We show that the associated transfer operator $\mathscr{L}$ acts on the space $C^{\beta\log}$
(see definition of this space in Example \ref{ex-no-spe-gap}).
The interest of this example lies on the fact that the restriction 
$\mathscr{L}|_{C^{\beta\log}}$ possesses a maximal positive eigenfunction but 
does not have the spectral gap property. 
By taking a sufficiently large $\beta$ and applying Theorem \ref{functionalclt}, we prove
for any observable $\phi\in C^{\beta \log}$, the existence of $\varrho>0$ such that
the stochastic process $Y_{n}(t)$  
converges in distribution to the Wiener measure in $D[0,\infty)$.

\subsection{Harmonic Functions on Non-transitive Settings}\label{Sec-HistBackground}

Multidimensional\break spaces of harmonic functions (SHF) for a 
transfer operator have appeared in several applications: 
On functional equations related to ergodic theory and Markov chains 
\cite{MR1078079}; on specific hyperbolic maps with metastable states \cite{MR2887917,MR2832249}; 
on multiresolution wavelet theory \cite{MR1837681,MR1008470}; and on the 
Ruelle-Perron-Frobenius Theorem in the context of non-forward topologically 
transitive subshifts of finite type \cite{MR1793194,MR1005524}.

Here, the multiplicity of SHF emerges from a mechanism that is entirely 
different from the ones listed above. 
Firstly, in contrast to the aforementioned works, our dynamic $\sigma:X\to X$ is forward transitive. 
In \cite{MR1793194,MR1078079,MR2887917,MR2832249,MR1837681,MR1008470}, 
linearly independent harmonic functions are supported 
on disjoint closed invariant sets, with respect to their respective dynamics. 
Furthermore, in these works 
the multiplicity of SHF is solely encoded by their underlying dynamical systems. 
In our case, the harmonic functions are also supported on invariant sets. 
However, the multiplicity is encoded in the potential. 
The invariant sets here have a completely different description since 
they are all dense sets on the phase space $X$. 
Moreover, their multiplicity emerges from the phase transition phenomenon, 
which is related to the regularity properties of the potential. 
The complications brought to the low regularity of the potentials 
are overcome by studying the structure of these invariant 
sets through a suitable Markov process constructed from the transfer operator. 
As we will see, the structure of the invariant sets is determined 
by the invariant sets of the process, which in turn are 
determined by the support of 
the extreme measures in the convex set $\mathscr{G}^{*}$.
Next, we provide further details on the settings
considered in the work mentioned previously.

In an abstract setting of functional equations, Conze and Raugi studied in a seminal paper 
\cite{MR1078079} SHFs for the following transfer operator, 
whose action on a test function  $\varphi:[0,1]\to\mathbb{R}$ is defined by 
\[
\mathscr{L}\varphi (x) 
= 
u(x/2)\varphi(x/2) 
+
u((x+1)/2)\varphi((x+1)/2),
\]
where $u:[0,1]\to [0,1]$ is a fixed continuous function (playing a similar role as a $g$-function)
satisfying $u(x/2)+u(x/2+1/2)=1$, 
for all $x\in [0,1/2]$. 
Their analysis is based on the properties of a martingale constructed from the iterates $\{\mathscr{L}^n\}_{n\in\mathbb{N}}$. An important feature in their work 
is that the function $u$ is allowed to vanish.
The authors also proved that the dimension of the SHF for 
$\mathscr{L}$ is bounded by the number of disjoint closed subsets of 
$X$ where transitive subsystems can be defined.

Dolgopyat and Wright \cite{MR2887917} studied metastable systems. They began with a 
hyperbolic interval map $T_0:I\to I$ with $m$ 
disjoint invariant sub-intervals $I_1, I_2,\ldots, I_m$.
Their map has $m$ mutually singular ergodic absolutely continuous 
invariant measures (ACIMs) $\mu_1,\mu_2,\ldots, \mu_m$,
which determine $m$ linearly independent harmonic
functions for the transfer operator $\mathscr{L}:\mathrm{BV}(I)\to BV(I)$:
\[
\mathscr{L}_{0}\varphi(x) = \sum_{y\in T^{-1}(x)} \frac{\varphi(y)}{|T_{0}'(y)|}.
\]
They constructed a metastable system $T_{\varepsilon}$ by perturbing, 
in a special way, the initial map $T_{0}$ and approximating its unique ACIM $\mu_{\varepsilon}$
by a convex combination of $\mu_1,\mu_2,\ldots,\mu_m$. After the $\varepsilon$-perturbation,
the dynamics become transitive and the SHF for the transfer operator
$\mathscr{L}_{\varepsilon}$ turned one-dimensional. 

Jorgensen \cite{MR1837681} considered a very general class of transfer
operators inspired by the multiresolution wavelet theory. 
Its guiding example was given by the transfer operator described below. 
Let $N\geqslant 2$ be a fixed integer and consider the Lebesgue space $L^1(\mathbb{T})\equiv L^1(\mathbb{T},\mathscr{B}(\mathbb{T}),\lambda)$ 
where $\mathbb{T}\equiv \{z\in \mathbb{C}: |z|=1\}$ and $\lambda$ is its unique Haar measure.
Thus the operator is defined by
\[
\mathscr{L}\varphi(z) = \frac{1}{N} \sum_{w^N=z} |m_0(w)|^2\varphi(w), 
\]
where $m\in L^{\infty}(\mathbb{T})$.
In this context it is usual to consider a further normalizing 
condition such as $\mathscr{L}\mathds{1}=\mathds{1}$ (quadrature wavelet filters). 
Next, the author determined the dimension of the space of harmonic 
functions by computing the number of a specific class of normal 
representations of the $C^*$-algebra $\mathfrak{A}_{N}$ on two 
unitary generators $U$ and $V$ satisfying the relation $UVU^{-1}=V^N$.

In subshifts of finite type, multidimensional SHF shows up 
in Theorem 1.5 in \cite{MR1793194}. To be more precise, let $N\geqslant 2$ be a fixed integer number, 
$E=\{1,\ldots,N\}$, and $A$ an $N\times N$ matrix with coefficients in $\{0,1\}$. The subshift
of finite type defined by the transition matrix $A$ is the restriction $\sigma_{A}^{+}$
of the full shift $\sigma^{+}: X\to X$ (on the one-sided product space $X=E^{\mathbb{N}}$) 
to the invariant set $\Sigma_{A}^{+}\equiv \{x\in X: A(x_n,x_{n+1})=1, \ \forall n\in \mathbb{N}\}$.
For any $0<\theta<1$, consider the metric $d_{\theta}$ on $\Sigma_{A}^{+}$ defined by the expression 
$d_{\theta}(x,y) = \sum_{n\in\mathbb{N}} \theta^n \delta_{K}(x_n,y_n)$, where $\delta_{K}$
is the Kroenecker delta.  
Let $f:\Sigma_{A}^{+} \to \mathbb{R}$ be a non-negative H\"older potential and consider 
the transfer operator $\mathscr{L}:C(\Sigma_{A}^{+})\to C(\Sigma_{A}^{+})$ given by
\[
\mathscr{L}\varphi (x) = \sum_{y\in (\sigma_{A}^{+})^{-1}(x)} e^{f(y)}\varphi(y).
\]
Assume that 
\[
\rho(\mathscr{L},C(\Sigma^{+}_{A})) \equiv \lim_{n\to\infty} 
\Big( \sup_{x\in \Sigma^{+}_{A}} \mathscr{L}^n\mathds{1}\Big)^{\frac{1}{n}}\neq 0.
\]
Then $\rho(\mathscr{L},C(\Sigma^{+}_{A}))$ is an eigenvalue and it can have a geometric multiplicity $m\geqslant 1$
if $\sigma_{A}^{+}$ is not forward transitive. Under these assumptions, item (3.b) of
Theorem 1.5 in \cite{MR1793194} shows that there is a basis $\{h_1,\ldots, h_{m}\}$ of the eigenspace
of $\mathscr{L}$ associated to the spectral radius $\rho(\mathscr{L},C(\Sigma^{+}_{A}))$. Actually,
they also show there are probability measures $\{\nu_1,\ldots, \nu_{m}\}$ satisfying 
$\mathscr{L}^{*}\nu_{j} = \rho(\mathscr{L},C(\Sigma^{+}_{A})) \nu_{j}$,
and $\langle \mu_{j},h_{i}\rangle = \delta_{K}(i,j)$, for all $j=1,\ldots m$.

\subsection{Organization of the Paper}
In Section \ref{sec:markov}, we use  Hopf's theory 
of Markov processes to prove our main results. 
In Section \ref{sec-exp-app}, we present some examples and applications 
of the theory developed in the previous section. It is divided into seven subsections.   
In Subsection \ref{sec-app-EqSt}, we study the support of some equilibrium states
associated to low regular continuous potentials.  
In Subsection \ref{sec-Curie-Weiss}, 
we introduce the Curie-Weiss potential and show that the SHF 
associated to this model is two-dimensional. 
This example aims to illustrate the abstract theory developed in Section \ref{sec:markov}. 
In Subsection \ref{sec-g-measures}, we provide
several examples where the dimension of SHF is greater than one.
In Section \ref{sec-suff-cond-eigenfunctions},
we present some classical results, adapted to our setting, that ensures the existence 
of harmonic functions. In Subsection \ref{sec-phase-transition} we provide an application in
Equilibrium Statistical Mechanics. 
We show that the SHF dimension is related to the phase transition 
phenomenon in one-dimensional lattice spin systems.
In Section \ref{FCLT}, we prove the validity of 
an FCLT by using the Poisson equation. We use our theorem to obtain an FCLT for 
the Dyson model (in the uniqueness regime) with a non-local observable. 
In Section \ref{sec-concluding-rmk}, we present concluding remarks. 

In the appendix, we develop some technical results of 
the support of the conformal measures and the existence of the extension of the transfer operator,
and we compute the spectrum of the extension. 
The main results are: Theorems \ref{Teo-EP-fully-supp} and \ref{Teo-Extension-L1}, and Proposition
\ref{mu_f satisfies C1} and \ref{prop-spectrum}. Although these results are well-known 
in finite alphabet settings, $|E|<+\infty$, they are fundamental 
and new in more general settings such as uncountable compact alphabets.

\section{Harmonic Functions and Markov Processes}\label{sec:markov}

Throughout this section, $f$ is a continuous potential, $\nu\in\mathscr{G}^{*}$ is a conformal measure,
$\mathscr{L}:C(X)\to C(X)$ is the transfer operator defined by \eqref{def-Lf}, 
and the operator $\mathbb{L}:L^1(\nu)\to L^1(\nu)$ is the extension of $\mathscr{L}$ provided by Theorem \ref{Teo-Extension-L1}.
Up to adding a constant to the potential, we can always assume that $\rho(\mathscr{L})=1$. 
Therefore, $\mathbb{L}$  can be seen as a Markov process in the Hopf's sense \cite{hopf},
which
means that it is a positive contraction on $L^1(\nu)$. Stating it
more precisely:

\begin{definition}[Hopf-Markov Processes]
A Markov process is defined as an ordered quadruple $(X,\mathscr{F},\mu,T)$, 
where the triple $(X,\mathscr{F},\mu)$ is a sigma-finite measure space with a positive measure
$\mu$ and $T$ is a bounded linear operator acting on $L^1(\mu)$
satisfying:
\begin{itemize}
	\item[(1)] $T$ is a contraction: 
	$\sup\{\|T\varphi\|_{1}: \|\varphi\|_{1}\leqslant 1 \}\equiv \|T\|_{\mathrm{op}}\leqslant 1$;
	\item[(2)] $T$ is a positive operator, that is, if $\varphi\geqslant 0$, then $T\varphi\geqslant 0$.
\end{itemize}
\end{definition}

Here, the sigma-algebra $\mathscr{F}$ will be the Borel sigma-algebra $\mathscr{B}(X)$, 
$T$ is the extension $\mathbb{L}:L^1(\nu)\to L^1(\nu)$ of 
the transfer operator $\mathscr{L}$, and $\mu=\nu$. 
Condition (2), the positivity property of $\mathbb{L}$,
is inherited from $\mathscr{L}$, and the condition (1) follows from  $\|\mathbb{L}\|_{\mathrm{op}}=\rho(\mathscr{L})$ (Theorem \ref{Teo-Extension-L1}),
and the assumption $\rho (\mathscr{L})=1$.

Since $\nu\in\mathscr{G}^{*}$, we have that
\begin{equation}\label{eigencondition}
\int_X \mathscr{L} \varphi\, d\nu = \int_X \varphi \, d\nu \quad \forall \varphi \in C(X,\mathbb{R}).
\end{equation}
This duality relation can be rewritten as $\langle \mathds{1}, \mathbb{L}\varphi \rangle_\nu = \langle
\mathds{1},
\varphi \rangle_\nu$ for every $\varphi \in C(X,\mathbb{R})$, where 
$\langle \cdot, \cdot \rangle_\nu$
is the usual bilinear form which puts $L^\infty(\nu)$ (left entry) and $L^1(\nu)$ (right
entry) in duality, and $\mathds{1} \in L^\infty(\nu)$ is the $\nu$-equivalence class
of the constant function equal to one.

From the continuity of $\mathbb{L}$ and the density of $C(X)$ on $L^1(\nu)$, we get that
\begin{equation*}
\langle \mathbb{L}^*\mathds{1},  u \rangle_\nu = \langle \mathds{1},  \mathbb{L}u \rangle_\nu
= \langle \mathds{1},  u \rangle_\nu \quad \forall u \in L^1(\nu).
\end{equation*}
This means that $\mathbb{L}^*\mathds{1} = \mathds{1}$ and therefore 
$\mathds{1}$ is always an eigenfunction of its dual.
On the other hand, the existence of a positive or non-negative eigenfunction
for $\mathbb{L}$ itself
is a much more delicate issue. In Subsection \ref{sec-suff-cond-eigenfunctions},
we discuss this problem
and provide some necessary and sufficient conditions for its existence and uniqueness.

\subsection{Measurable Identity Principle}

In the general theory of Markov processes, a measurable set $B\in\mathscr{B}(X)$
satisfying $\mathbb{L}^{*}\mathds{1}_{B}=\mathds{1}_{B}$ 
is sometimes called an invariant set for the process. 
Such sets play an essential role in the theory to be developed ahead and 
we begin by showing that conditioning a conformal measure to an invariant set 
results again in a conformal measure.  

\begin{proposition}\label{prop-conditional-coformal}
If $\mathbb{L}^*\mathds{1}_B = \mathds{1}_B$, for some  $B\in\mathscr{B}(X)$,
then the Borel measure $\nu_{B}$ given by $A\longmapsto \nu(A\cap B)$ is an eigenmeasure for $\mathscr{L}^{*}$.
Moreover, if $\nu(B)\neq 0$ then the conditional measure $A\longmapsto \nu(A\cap B)/\nu(B)\equiv \nu(A|B)$
is an element of $\mathscr{G}^*$.
\end{proposition}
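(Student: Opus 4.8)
The plan is to verify the eigenmeasure relation $\mathscr{L}^{*}\nu_{B}=\nu_{B}$ (recall that we have normalized so that $\rho(\mathscr{L})=1$) by testing against continuous functions and transporting the computation to the adjoint $\mathbb{L}^{*}$ acting on $L^{\infty}(\nu)$, which is exactly where the hypothesis lives. By the Riesz--Markov identification of $\mathscr{L}^{*}$ on measures, it suffices to show that
\begin{equation*}
\int_{X}\mathscr{L}\varphi\,d\nu_{B} = \int_{X}\varphi\,d\nu_{B}\qquad\text{for all }\varphi\in C(X).
\end{equation*}
The first step is the elementary observation that integration against $\nu_{B}$ equals integration of the $\mathds{1}_{B}$-weighted integrand against $\nu$: since $\nu_{B}(A)=\nu(A\cap B)=\int_{A}\mathds{1}_{B}\,d\nu$, for any bounded measurable $\psi$ one has $\int_{X}\psi\,d\nu_{B}=\int_{X}\psi\,\mathds{1}_{B}\,d\nu=\langle\mathds{1}_{B},\psi\rangle_{\nu}$.

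The second step is the core of the argument. Using that $\mathbb{L}$ extends $\mathscr{L}$, so that $\mathscr{L}\varphi=\mathbb{L}\varphi$ in $L^{1}(\nu)$ for every $\varphi\in C(X)$, I rewrite the left-hand side and move $\mathds{1}_{B}$ across the adjoint:
\begin{equation*}
\int_{X}\mathscr{L}\varphi\,d\nu_{B}
=\langle\mathds{1}_{B},\mathbb{L}\varphi\rangle_{\nu}
=\langle\mathbb{L}^{*}\mathds{1}_{B},\varphi\rangle_{\nu}
=\langle\mathds{1}_{B},\varphi\rangle_{\nu}
=\int_{X}\varphi\,d\nu_{B},
\end{equation*}
where the penultimate equality is precisely the invariance hypothesis $\mathbb{L}^{*}\mathds{1}_{B}=\mathds{1}_{B}$. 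Since $\varphi\in C(X)$ was arbitrary and $\nu_{B}$ is a finite positive Borel measure ($\nu_{B}(X)=\nu(B)\leqslant 1$), the Riesz--Markov theorem yields $\mathscr{L}^{*}\nu_{B}=\nu_{B}$, establishing that $\nu_{B}$ is an eigenmeasure for $\mathscr{L}^{*}$.

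For the conditional statement, when $\nu(B)\neq 0$ I set $\tilde{\nu}=\nu_{B}/\nu(B)$, i.e. $\tilde{\nu}(A)=\nu(A\mid B)$. This is a probability measure, since $\tilde{\nu}(X)=\nu(B)/\nu(B)=1$, and by linearity of $\mathscr{L}^{*}$ the eigenrelation just obtained scales to $\mathscr{L}^{*}\tilde{\nu}=\tilde{\nu}$; hence $\tilde{\nu}\in\mathscr{G}^{*}$.

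The argument is essentially bookkeeping, so there is no serious analytic obstacle; the only point requiring care is keeping the four operators consistent---$\mathscr{L}$ on $C(X)$, its $L^{1}(\nu)$-extension $\mathbb{L}$, the adjoint $\mathbb{L}^{*}$ on $L^{\infty}(\nu)$ (where the hypothesis $\mathbb{L}^{*}\mathds{1}_{B}=\mathds{1}_{B}$ is stated), and the measure-level transpose $\mathscr{L}^{*}$---and invoking the density of $C(X)$ in $L^{1}(\nu)$ only implicitly, through the already-established extension property, rather than re-deriving it.
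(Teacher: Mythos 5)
Your proof is correct and follows essentially the same route as the paper's: both verify $\int_X \mathscr{L}\varphi\, d\nu_B = \int_X \varphi\, d\nu_B$ for continuous $\varphi$ by writing the integral as $\langle \mathds{1}_B, \mathbb{L}\varphi\rangle_\nu$, passing to the adjoint, and invoking the hypothesis $\mathbb{L}^*\mathds{1}_B = \mathds{1}_B$, then normalizing for the conditional statement. No substantive difference.
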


\begin{proof}
To prove the first statement it is enough to use the condition $\mathbb{L}^*\mathds{1}_B = \mathds{1}_B$
together with \eqref{eigencondition}.  Indeed, for any continuous function $\varphi$ we have
\begin{align*}
\int_{X}\mathscr{L}\varphi\, d\nu_{B}
 & =  \int_{X}\mathds{1}_{B} \, \mathscr{L}\varphi  \, d\nu
= \langle\mathds{1}_{B},\mathscr{L}\varphi \rangle_{\nu}
\\
 & =\langle\mathds{1}_{B},\mathbb{L}\varphi \rangle_{\nu}
=\langle\mathbb{L}^{*}\mathds{1}_{B},\varphi \rangle_{\nu}
\\
 & =
\langle \mathds{1}_{B},\varphi \rangle_{\nu}
=
\int_{X} \varphi\, d\nu_{B}.
\end{align*}

If $\nu(B)\neq 0$ we have  immediately that
$\nu(\cdot\cap B)/\nu(B) = \nu(\cdot|B)$
is a conformal measure.
\end{proof}

\begin{lemma}\label{lema-extreme-01-law}
If $\nu\in\mathscr{G}^{*}$ is an extreme point, then there is no $B\in \mathscr{B}(X)$
such that $0<\nu(B)<1$ and $\mathbb{L}^{*}\mathds{1}_{B}=\mathds{1}_{B}$.
\end{lemma}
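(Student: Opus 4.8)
The plan is to argue by contradiction via the standard zero--one law strategy for extreme conformal measures, feeding Proposition \ref{prop-conditional-coformal} with both the invariant set and its complement. Suppose, contrary to the claim, that there were a set $B\in\mathscr{B}(X)$ with $0<\nu(B)<1$ and $\mathbb{L}^{*}\mathds{1}_{B}=\mathds{1}_{B}$. The first step is to observe that the complement $B^{c}$ is then also invariant: since we have already established that $\mathbb{L}^{*}\mathds{1}=\mathds{1}$, linearity of $\mathbb{L}^{*}$ gives
\[
\mathbb{L}^{*}\mathds{1}_{B^{c}}=\mathbb{L}^{*}(\mathds{1}-\mathds{1}_{B})=\mathds{1}-\mathds{1}_{B}=\mathds{1}_{B^{c}}.
\]
Moreover $\nu(B^{c})=1-\nu(B)\in(0,1)$, so both $\nu(B)$ and $\nu(B^{c})$ are strictly positive. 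This is precisely the point where the hypothesis $0<\nu(B)<1$ is used.

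The second step is to manufacture two conformal measures out of $B$ and $B^{c}$ and reassemble them into a nontrivial convex decomposition of $\nu$. Since $\nu(B)\neq 0$ and $\nu(B^{c})\neq 0$, Proposition \ref{prop-conditional-coformal} applies to each invariant set and yields that the conditional measures $\nu(\,\cdot\mid B)$ and $\nu(\,\cdot\mid B^{c})$ both lie in $\mathscr{G}^{*}$. By definition of the conditional measures, for every $A\in\mathscr{B}(X)$,
\[
\nu(A)=\nu(A\cap B)+\nu(A\cap B^{c})=\nu(B)\,\nu(A\mid B)+\nu(B^{c})\,\nu(A\mid B^{c}),
\]
so $\nu$ is expressed as a convex combination of two elements of $\mathscr{G}^{*}$ with weights $\nu(B),\nu(B^{c})\in(0,1)$ summing to one.

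The final step invokes extremality. Because $\nu$ is an extreme point of the convex set $\mathscr{G}^{*}$ and the coefficients above lie strictly between $0$ and $1$, the decomposition forces $\nu(\,\cdot\mid B)=\nu(\,\cdot\mid B^{c})=\nu$. Evaluating the identity $\nu(A\mid B)=\nu(A)$ at $A=B^{c}$ then produces
\[
0=\frac{\nu(B^{c}\cap B)}{\nu(B)}=\nu(B^{c})>0,
\]
which is absurd; hence no such $B$ can exist. I do not anticipate a genuine obstacle here, since the whole argument is a short zero--one law once Proposition \ref{prop-conditional-coformal} is in place. The only point that requires a moment's attention is noticing that $B^{c}$ is itself invariant --- this is what makes the convex decomposition of $\nu$ available --- together with the bookkeeping that both weights are strictly positive, which is exactly what the condition $0<\nu(B)<1$ provides.
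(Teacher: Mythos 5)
Your proof is correct and follows essentially the same route as the paper: both establish that $B^c$ is invariant via $\mathbb{L}^{*}\mathds{1}=\mathds{1}$, apply Proposition \ref{prop-conditional-coformal} to both $B$ and $B^c$, and contradict extremality through the decomposition $\nu = \nu(B)\,\nu(\cdot\mid B)+\nu(B^c)\,\nu(\cdot\mid B^c)$. The only cosmetic difference is that the paper notes directly that $\nu(\cdot\mid B)\neq\nu(\cdot\mid B^c)$, whereas you derive the contradiction by forcing both conditionals to equal $\nu$ and evaluating at $B^c$; these are the same argument.
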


\begin{proof}
Suppose, by contradiction,
that such a Borel set $B$ does exist. From Proposition \ref{prop-conditional-coformal}
we know that $\nu(\cdot|B)$ is a conformal measure.
Since $\mathbb{L}^*\mathds{1} = \mathds{1}$, linearity of $\mathbb{L}^{*}$
implies that $\mathbb{L}^*\mathds{1}_{B^c}=\mathds{1}_{B^c}$. By applying again
Proposition \ref{prop-conditional-coformal} we get that
$\nu(\cdot|B^c)$ is a conformal measure. Clearly, $\nu(\cdot|B)\neq \nu(\cdot|B^c)$.
But $\nu = \nu(B)\nu(\cdot|B)+\nu(B^c)\nu(\cdot|B^c)$
which contradicts the assumption that $\nu$ is extreme.
\end{proof}

The following result shows that any harmonic function for $\mathbb{L}$ 
satisfies a kind of identity principle. 
More precisely, it says that if a non-negative  harmonic function vanishes
on a
set of positive $\nu$-measure ($\nu\in \mathrm{ex}(\mathscr{G}^{*})$), then it should vanish
$\nu$-almost everywhere.

\begin{theorem}\label{teo-positivo}
Let $\nu$ be an extreme point in $\mathscr{G}^{*}$
and $u\geqslant 0$  a harmonic function (therefore not identically zero) of
$\mathbb{L}: L^1(\nu) \rightarrow L^1(\nu)$, associated
to its operator norm.  Then $u>0$ $\nu$-a.e.
\end{theorem}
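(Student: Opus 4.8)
The plan is to show that the zero set $A:=\{u=0\}$ is $\nu$-negligible. I would do this by proving that $A$ is an \emph{invariant set} for the process, i.e. $\mathbb{L}^{*}\mathds{1}_{A}=\mathds{1}_{A}$, and then invoking the $0$–$1$ law of Lemma \ref{lema-extreme-01-law}: since $u$ is not identically zero we have $\nu(A)<1$, and invariance forces $\nu(A)\in\{0,1\}$, whence $\nu(A)=0$, that is $u>0$ $\nu$-a.e.

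The first step is to record the structural identity that makes the Lemma applicable. Using the pull-out relation $\mathscr{L}((\psi\circ\sigma)\varphi)=\psi\,\mathscr{L}\varphi$ (valid because $\sigma(ax)=x$) together with the eigen-relation \eqref{eigencondition}, one obtains $\langle\psi,\mathbb{L}\varphi\rangle_{\nu}=\langle\psi\circ\sigma,\varphi\rangle_{\nu}$, so that the dual is the Koopman operator $\mathbb{L}^{*}\psi=\psi\circ\sigma$. In particular $\mathbb{L}^{*}\mathds{1}_{B}=\mathds{1}_{\sigma^{-1}B}$, so the invariant sets of the process are exactly the $\sigma$-invariant sets; and the same identity shows that $\mu:=u\,\nu$ is a finite, nonzero, $\sigma$-invariant measure with $\mu\ll\nu$ (the harmonicity $\mathbb{L}u=u$ is precisely invariance of this density). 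Next I would extract the ``easy half'' of the invariance of $A$ from harmonicity: since $u=0$ on $A$,
\[
\int_{\sigma^{-1}A}u\,d\nu=\langle\mathbb{L}^{*}\mathds{1}_{A},u\rangle_{\nu}=\langle\mathds{1}_{A},\mathbb{L}u\rangle_{\nu}=\int_{A}u\,d\nu=0,
\]
and as $u\geqslant0$ this yields $u=0$ $\nu$-a.e. on $\sigma^{-1}A$, i.e. $\sigma^{-1}A\subseteq A$ (equivalently, $\{u>0\}$ is forward $\sigma$-invariant). Thus $\mathbb{L}^{*}\mathds{1}_{A}\leqslant\mathds{1}_{A}$: at this stage $A$ is only \emph{sub}-invariant.

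The hard part, which I expect to be the main obstacle, is to upgrade $\sigma^{-1}A\subseteq A$ to an equality, i.e. to show that the defect $W:=A\setminus\sigma^{-1}A=\{u=0,\ u\circ\sigma>0\}$ is $\nu$-null. Forward invariance of $\{u>0\}$ makes the preimages $\{\sigma^{-n}W\}_{n\geqslant0}$ pairwise disjoint, so $W$ is a wandering set and $A$ agrees $\nu$-mod-$0$ with the dissipative part of the Markov process $\mathbb{L}$, whereas $\{u>0\}$ carries the equivalent invariant measure $\mu$ and is the conservative part. The crux is therefore a conservativity statement: I would argue that $\mathbb{L}$ is conservative on all of $X$, so that $\nu(W)=0$. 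The leverage is twofold: $\nu$ is a \emph{finite} invariant measure for $\mathbb{L}$, and $\nu$ is extreme. Concretely, one shows that the Hopf conservative/dissipative parts are genuine ($\sigma$-)invariant sets; then extremality, through Lemma \ref{lema-extreme-01-law}, forbids the dissipative part from having $\nu$-measure strictly between $0$ and $1$, while the existence of the nonzero finite invariant measure $\mu\ll\nu$ rules out full dissipativity. Hence the dissipative part — which coincides with $A$ modulo $\nu$, since the conservative part must carry the a.e.-positive invariant density — has measure $0$.

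Once $\nu(W)=0$, the set $A$ is genuinely invariant, $\mathbb{L}^{*}\mathds{1}_{A}=\mathds{1}_{A}$, and the $0$–$1$ law of Lemma \ref{lema-extreme-01-law} concludes that $\nu(A)=0$, i.e. $u>0$ $\nu$-a.e. The delicate points to be handled carefully are that $\sigma$ does \emph{not} preserve $\nu$ (only the derived measure $\mu$ is $\sigma$-invariant), so the passage from sub-invariance to invariance genuinely requires the conservative/dissipative decomposition rather than a naive Poincaré-recurrence argument; this is exactly where extremality of $\nu$ must be used, beyond the mere ergodicity already encoded in Lemma \ref{lema-extreme-01-law}.
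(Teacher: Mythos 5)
Your overall architecture --- show that $A=\{u=0\}$ is an invariant set and then invoke the $0$--$1$ law of Lemma \ref{lema-extreme-01-law} --- is exactly the paper's strategy, and your ``easy half'' (pairing $\mathbb{L}^{*}\mathds{1}_{A}$ against $u$ to get $\sigma^{-1}A\subseteq A$ modulo $\nu$) is the same computation the paper performs, there phrased as $\mathds{1}_{B^c}\,\mathbb{L}^{*}\mathds{1}_{B^c}=\mathds{1}_{B^c}$, hence $\mathbb{L}^{*}\mathds{1}_{B^c}\geqslant\mathds{1}_{B^c}$. You are also right that upgrading this sub-invariance to genuine invariance is the crux of the proof.

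The gap is in how you propose to perform that upgrade. Your argument requires the Hopf dissipative part $D$ to be a \emph{genuinely} invariant set, $\mathbb{L}^{*}\mathds{1}_{D}=\mathds{1}_{D}$, so that Lemma \ref{lema-extreme-01-law} yields $\nu(D)\in\{0,1\}$. But the general Hopf--Foguel theory only gives the absorbing property $\mathbb{L}^{*}\mathds{1}_{C}\geqslant\mathds{1}_{C}$, equivalently $\mathbb{L}^{*}\mathds{1}_{D}\leqslant\mathds{1}_{D}$, i.e.\ $\sigma^{-1}D\subseteq D$ in the Koopman picture; this inclusion is strict in general (think of a nonsingular endomorphism with an attracting fixed point: $C$ is the fixed point and $\sigma^{-1}C\supsetneq C$). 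So $D$ sits in precisely the same ``sub-invariant but not known to be invariant'' position as $A$ itself, the $0$--$1$ law does not apply to it, and passing to the invariant set $\bigcap_{n}\sigma^{-n}D$ only yields $\nu(\sigma^{-n}D)\to 0$, not $\nu(D)=0$. The detour through conservativity therefore relocates the difficulty rather than resolving it. (A secondary inaccuracy: ``the conservative part must carry the a.e.-positive invariant density'' is not a consequence of conservativity --- conservative nonsingular systems need not admit any finite absolutely continuous invariant measure, let alone one with a.e.\ positive density --- but that is not where the proof breaks.)

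For comparison, the paper closes the hard half not with recurrence theory but with the complementary norm estimate
$\lVert \mathbb{L}^{*}\mathds{1}_{B^c}\rVert_{1}=\langle \mathds{1}_{B^c},\mathbb{L}\mathds{1}\rangle_{\nu}\leqslant\langle \mathds{1}_{B^c},\mathds{1}\rangle_{\nu}=\lVert \mathds{1}_{B^c}\rVert_{1}$,
which, combined with the pointwise inequality $0\leqslant \mathds{1}_{B^c}\leqslant \mathbb{L}^{*}\mathds{1}_{B^c}$ from the easy half, forces $\mathbb{L}^{*}\mathds{1}_{B^c}=\mathds{1}_{B^c}$ $\nu$-a.e. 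In your notation this is exactly the missing inequality $\nu(\sigma^{-1}B^{c})\leqslant\nu(B^{c})$: a single $L^{1}$-contraction property of $\mathbb{L}^{*}$ on indicator functions, which is the step your conservativity argument was meant to replace and which you would still need to supply.
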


\begin{proof}
Suppose, by contradiction, there is a set $B\in \mathscr{B}(X)$
such that $0<\nu(B)<1$, $u|_B=0$ and $u_{B^c}>0$.
Since $\mathbb{L}u=u$, we get that
\begin{equation}\label{eq:L*}
\langle \mathbb{L}^* \mathds{1}_{B^c}, u \rangle_{\nu}
=
\langle \mathds{1}_{B^c}, \mathbb{L}u \rangle_{\nu}
=
\langle \mathds{1}_{B^c}, u \rangle_{\nu}.
\end{equation}
Note that $\mathbb{L}^*1_{B^c} \leqslant 1$,
because the adjoint of a positive contraction is
also a positive contraction. 
Since  $u$ is non-negative and supported on $B^c$, and
$0\leqslant \mathbb{L}^{*}\mathds{1}_{B^c} \leqslant 1$,
it follows from \ref{eq:L*} that
$\mathds{1}_{B^c} \mathbb{L}^*\mathds{1}_{B^c}=\mathds{1}_{B^c}$.
From these observations, we get that
$\mathbb{L}^*\mathds{1}_{B^c}\geqslant \mathds{1}_{B^c}$,
since $\mathbb{L}^{*}$ is positive. Therefore
\begin{equation*}
\lVert \mathbb{L}^* \mathds{1}_{B^c} \rVert_1
\geqslant
\lVert \mathds{1}_{B^c} \rVert_1.
\end{equation*}

As we already mentioned, $\mathbb{L}^{*}$ is a contraction with respect to the $L^\infty(\nu)$-norm.
Moreover, the operator $\mathbb{L}^*$ acts as a contraction, with respect to the $L^1(\nu)$-norm,
on the linear manifold spanned by the characteristic functions. Indeed,
\begin{equation*}
\lVert \mathbb{L}^* \mathds{1}_{B^c} \rVert_1
= \langle \mathbb{L}^* 1_{B^c}, \mathds{1}  \rangle_\nu
= \langle \mathds{1}_{B^c}, \mathbb{L} \mathds{1}  \rangle_\nu
\leqslant
\langle \mathds{1}_{B^c}, \mathds{1}  \rangle_\nu
=
\lVert \mathds{1}_{B^c} \rVert_1.
\end{equation*}

Since
$0\leqslant \mathds{1}_{B^c}\leqslant  \mathbb{L}^{*}\mathds{1}_{B^c}$
and
$\lVert \mathbb{L}^* \mathds{1}_{B^c}
\rVert_1 = \lVert \mathds{1}_{B^c} \rVert_1$,
it follows that the equation
$\mathbb{L}^*\mathds{1}_{B^c}=\mathds{1}_{B^c}$ holds.
On the other hand, $0<\nu(B^c)<1$ and $\nu$ is an extreme point in $\mathscr{G}^{*}$
and so Proposition \ref{prop-conditional-coformal} applies and we get a contradiction.
\end{proof}

In the next subsection, we show that this set of ideas can also be used
to handle the eigenspace's simplicity associated with the eigenvalue one, 
when the conformal measure is an extreme point in $\mathscr{G}^{*}$.
Before, we need the following lemma.

\begin{lemma}\label{lema-eigenfunctions-positivas}
If $\nu$ is an extreme point in $\mathscr{G}^{*}$
and $u\in L^1(\nu)$ is a harmonic function of $\mathbb{L}$ associated to one,
then $u$ has a definite sign $\nu$-almost everywhere.
\end{lemma}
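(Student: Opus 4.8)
The plan is to reduce the statement to the already-established Theorem \ref{teo-positivo} by decomposing $u$ into its positive and negative parts and showing that each part is itself a non-negative harmonic function. So I would begin by writing $u = u^{+} - u^{-}$, with $u^{+} = \max(u,0)$ and $u^{-} = \max(-u,0)$, so that $|u| = u^{+} + u^{-}$ and the two functions $u^{+}, u^{-}$ have disjoint supports.

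The first genuine step is to prove that $|u|$ is again harmonic. Since $\mathbb{L}$ is a positive, hence monotone, operator and $\mathbb{L}u = u$, $\mathbb{L}(-u) = -u$, applying monotonicity to the pointwise inequalities $|u| \geqslant u$ and $|u| \geqslant -u$ yields $\mathbb{L}|u| \geqslant u$ and $\mathbb{L}|u| \geqslant -u$, whence $\mathbb{L}|u| \geqslant |u|$ $\nu$-a.e. I would then upgrade this inequality to an equality using the duality relation $\mathbb{L}^{*}\mathds{1} = \mathds{1}$ proved above: for every $g \in L^{1}(\nu)$ one has $\int_X \mathbb{L}g\, d\nu = \langle \mathbb{L}^{*}\mathds{1}, g\rangle_{\nu} = \int_X g\, d\nu$, so with $g = |u|$ we get $\int_X \mathbb{L}|u|\, d\nu = \int_X |u|\, d\nu$. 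As $\mathbb{L}|u| - |u| \geqslant 0$ integrates to zero, it vanishes $\nu$-a.e., giving $\mathbb{L}|u| = |u|$. Consequently $u^{+} = \tfrac{1}{2}(|u|+u)$ and $u^{-} = \tfrac{1}{2}(|u|-u)$ are both non-negative and harmonic for $\mathbb{L}$.

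Finally I would invoke Theorem \ref{teo-positivo}: since $\nu$ is extreme, any non-negative harmonic function for $\mathbb{L}$ is either identically zero or strictly positive $\nu$-a.e. Applying this dichotomy to $u^{+}$ and to $u^{-}$, and recalling that these functions have disjoint supports, they cannot both be strictly positive on a set of full measure; hence at least one of them is identically zero. If $u^{-} = 0$ then $u \geqslant 0$ $\nu$-a.e., and if $u^{+} = 0$ then $u \leqslant 0$ $\nu$-a.e. In either case $u$ has a definite sign, as claimed.

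I expect the main obstacle to be the passage $\mathbb{L}|u| = |u|$. The inequality $\mathbb{L}|u| \geqslant |u|$ is an immediate consequence of positivity, but converting it into an equality rests crucially on the conservativity $\mathbb{L}^{*}\mathds{1} = \mathds{1}$, which is what lets one compare the integrals and conclude the difference vanishes. Once $|u|$ is known to be harmonic, the disjoint-support argument together with Theorem \ref{teo-positivo} closes the proof with no further computation.
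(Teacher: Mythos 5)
Your proof is correct, and it takes a genuinely different route from the paper's. The paper argues by contradiction: it sets $B=\{u>0\}$, expands $\mathbb{L}u^{+}=u^{+}+(\mathbb{L}u^{-}-u^{-})$ over $B$ and $B^{c}$, and then uses positivity together with the $L^{1}$-contraction property of $\mathbb{L}$ in two successive integration steps to force $(\mathbb{L}u^{-}-u^{-})\mathds{1}_{B^{c}}=0$ and $\mathbb{L}u^{-}\mathds{1}_{B}=0$, arriving at $\mathbb{L}u^{+}=u^{+}$ before invoking Theorem \ref{teo-positivo}. You instead run the classical lattice argument for fixed points of conservative positive operators: positivity gives the sub-invariance $\mathbb{L}|u|\geqslant|u|$, and the conservativity $\mathbb{L}^{*}\mathds{1}=\mathds{1}$ (established in the paper just before this subsection) forces $\int_X(\mathbb{L}|u|-|u|)\,d\nu=0$, hence $\mathbb{L}|u|=|u|$ and both $u^{\pm}=\tfrac12(|u|\pm u)$ are harmonic; the disjoint-support dichotomy from Theorem \ref{teo-positivo} then finishes the proof exactly as in the paper. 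Your version is shorter and makes the underlying mechanism (harmonicity of $|u|$) explicit, at the cost of leaning directly on $\mathbb{L}^{*}\mathds{1}=\mathds{1}$, whereas the paper's computation uses only positivity and $\|\mathbb{L}\|_{\mathrm{op}}\leqslant1$; since both facts are available in this setting, the two arguments are equally valid, and both ultimately reduce the lemma to Theorem \ref{teo-positivo}.
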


\begin{proof}
The proof is by contradiction.
Assume $u$ has a non-trivial decomposition on its  positive and negative parts, that is,
$0<\nu(\{x\in X: u(x)> 0\})<1$ and $0<\nu(\{x\in X: u(x)\leqslant 0\})<1$.
We call $B=\{x\in X: u(x)> 0\}$. By using the linearity of $\mathbb{L}$ and $\mathbb{L}u=u$, we get
\begin{equation}\label{L u^+}
\begin{split}
\mathbb{L} (u^+) 
&= \mathbb{L} (u + u^-)
= u + \mathbb{L}(u^-)
= u^{+} + (\mathbb{L} u^{-} - u^{-})
\\
&= (u^{+} + (\mathbb{L} u^{-} - u^{-}))\mathds{1}_B  + (u^{+} + (\mathbb{L} u^{-} - u^{-}))\mathds{1}_{B^c}
\\
&=(u^{+} +\mathbb{L} u^{-} )\mathds{1}_{B} +(\mathbb{L} u^{-} - u^{-}))\mathds{1}_{B^c}.
\end{split}
\end{equation}
By multiplying both sides of \eqref{L u^+} by $\mathds{1}_{B^c}$,
we get from the positivity of $\mathbb{L}$
that $0\leqslant \mathds{1}_{B^c}\mathbb{L} (u^+)  = (\mathbb{L} u^{-} - u^{-}))\mathds{1}_{B^c}$.
Therefore
\begin{align*}
0	\leqslant \int_{X} (\mathbb{L} u^{-} - u^{-}))\mathds{1}_{B^c}\, d\nu
 & =
\int_{X} \mathds{1}_{B^c}\mathbb{L} u^{-} - u^{-} \, d\nu
\\
 & \leq
\int_{X}\mathbb{L} u^{-}\, d\nu  - \int_{X} u^{-} \, d\nu
\\
 & =
\|\mathbb{L} u^{-}\|_{1} - \|u^{-}\|_{1}\leqslant 0,
\end{align*}
where in the last inequality we used the contraction property of $\mathbb{L}$.
This shows that $(\mathbb{L} u^{-} - u^{-}))\mathds{1}_{B^c}=0$ $\nu$-a.e..
Replacing this in \eqref{L u^+} we get the equality
$\mathbb{L}u^{+} =(u^{+} +\mathbb{L} u^{-} )\mathds{1}_{B}$.
Now, we integrate both sides of this equality obtaining
$\|\mathbb{L}u^{+}\|_{1} =\|u^{+}\|_{1} +\|\mathbb{L} u^{-}\mathds{1}_{B}\|_{1}$.
Applying once more	the contraction property  we have that
$\|\mathbb{L} u^{-}\mathds{1}_{B}\|_{1}=0$. This implies $\mathbb{L}u^{-}\mathds{1}_{B}=0$
$\nu$-a.e.. Finally, from the identity $\mathbb{L}u^{+} =(u^{+} +\mathbb{L} u^{-} )\mathds{1}_{B}$,
it follows that $\mathbb{L}u^{+} =\mathds{1}_{B} u^{+} = u^{+}$.

Since we are assuming that $\nu$ is an extreme point in $\mathscr{G}^{*}$ and we have shown that
$u^{+}$ is a not identically zero non-negative harmonic function of $\mathbb{L}$, we can apply
Theorem \ref{teo-positivo} to get that	$u^{+}>0$ $\nu$-a.e., which implies that $u=u^{+}$,
contradicting the assumption that  $u$ has non-trivial positive and negative parts.
\end{proof}

\subsection{The Dimension of the Space of Harmonic Functions}

\begin{theorem}\label{teo-unica}
If $\nu$ is an extreme point in $\mathscr{G}^{*}$
then the dimension of the eigenspace associated to
the operator norm of $\mathbb{L}:L^1(\nu) \rightarrow L^1(\nu)$
is at most one.
\end{theorem}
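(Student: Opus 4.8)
The plan is to show that any two harmonic functions for $\mathbb{L}$ associated to eigenvalue one must be linearly dependent, using the sign-definiteness provided by Lemma \ref{lema-eigenfunctions-positivas} together with the strict positivity from Theorem \ref{teo-positivo}. First I would let $u_1, u_2 \in L^1(\nu)$ be two harmonic functions and observe that, by Lemma \ref{lema-eigenfunctions-positivas}, each has a definite sign $\nu$-a.e.; after possibly multiplying by $-1$ I may assume both are non-negative, and by Theorem \ref{teo-positivo} both satisfy $u_i > 0$ $\nu$-a.e. (a harmonic function that is not identically zero cannot vanish on a positive-measure set). The key normalization is then to rescale so that $\int_X u_1 \, d\nu = \int_X u_2 \, d\nu$; this is legitimate because integrating $\mathbb{L}u_i = u_i$ against $\mathds{1}$ and using $\mathbb{L}^*\mathds{1} = \mathds{1}$ shows the integrals are preserved and, being integrals of strictly positive functions, are nonzero.

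The central step is to consider the difference $w = u_1 - u_2$, which is again a harmonic function of $\mathbb{L}$ (by linearity $\mathbb{L}w = w$) and which satisfies $\int_X w \, d\nu = 0$. I would then apply Lemma \ref{lema-eigenfunctions-positivas} to $w$: it forces $w$ to have a definite sign $\nu$-a.e. But a function of definite sign with zero integral against a positive measure must vanish $\nu$-a.e. Indeed, if $w \geqslant 0$ $\nu$-a.e. and $\int_X w \, d\nu = 0$, then $w = 0$ $\nu$-a.e., and symmetrically if $w \leqslant 0$. Hence $u_1 = u_2$ $\nu$-a.e., which, after undoing the rescaling, shows $u_1$ and $u_2$ are proportional as elements of $L^1(\nu)$. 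Since any two eigenvectors are proportional, the eigenspace has dimension at most one.

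The main obstacle, and the point requiring care, is the reduction to the non-negative case and the invocation of strict positivity. One must verify that after the sign-normalization both functions are genuinely non-negative (not merely non-positive) and are not the zero class, so that Theorem \ref{teo-positivo} applies and delivers $u_i > 0$ $\nu$-a.e.; this strict positivity guarantees the normalizing integrals are strictly positive, so the rescaling factors are well-defined and finite. A subtlety worth flagging is that Lemma \ref{lema-eigenfunctions-positivas} only yields \emph{some} definite sign for $w$, which could a priori be either nonnegative or nonpositive; both alternatives, however, combined with the vanishing integral, force $w = 0$, so no case is lost. Once $w = 0$ $\nu$-a.e.\ is established the conclusion is immediate, so essentially all the work is concentrated in this clean application of the two previously proved results.
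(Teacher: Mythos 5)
Your proof is correct and follows essentially the same route as the paper: both form a zero-mean linear combination of two sign-normalized harmonic functions and use Lemma \ref{lema-eigenfunctions-positivas} to conclude that a mean-zero harmonic function of definite sign must vanish. The only cosmetic difference is that you argue directly for proportionality while the paper frames it as a contradiction with linear independence, and you are slightly more explicit about invoking Theorem \ref{teo-positivo} to justify that the normalizing integrals are strictly positive.
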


\begin{proof}
The proof is by contradiction. Suppose that there are two linearly independent harmonic functions
$u$ and $v$ for $\mathbb{L}$. By Lemma \ref{lema-eigenfunctions-positivas}, we can assume that they
are both positive almost everywhere.
Let $\int_X u d\nu \equiv \nu(u)$ be the mean value of $u$, with respect to $\nu$.
Consider the following linear combination $w \equiv u - (\nu(u)/\nu(v))v$.
Since $w$ is a linear combination of two
harmonic functions follows that it is also a harmonic function.
Clearly, $\nu(w)=0$, and so it is either identically zero or
it has non-trivial positive and negative parts.
Note that it can not be identically zero since $w$ is a linear
combination of two linear independent functions.
Nor $w$ can have a positive and a negative part,
since it would contradict Lemma
\ref{lema-eigenfunctions-positivas} and therefore we have a contradiction.
\end{proof}

Now we have the tools to prove the following Corollary.
\begin{corollary}\label{cor-dim-continuas}
Let $f$ be any continuous potential and $\mathscr{L}:C(X)\to C(X)$ be a transfer operator constructed from
this potential
and a fully supported a priori probability measure $p$ on $E$. Then the eigenspace of $\mathscr{L}$
associated to its
spectral radius has either dimension zero or one.
\end{corollary}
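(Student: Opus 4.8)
The plan is to transfer the question from $C(X)$ to the Lebesgue space $L^1(\nu)$, where Theorem \ref{teo-unica} already resolves the dimension problem, and then to use the full-support hypothesis on $p$ to ensure that nothing is lost in this transfer. After adding a suitable constant to the potential we may assume $\rho(\mathscr{L})=1$, so that the eigenspace of $\mathscr{L}:C(X)\to C(X)$ associated to the spectral radius is exactly the space of continuous $\varphi$ with $\mathscr{L}\varphi=\varphi$.

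First I would fix an extreme point $\nu\in\mathrm{ex}(\mathscr{G}^{*})$; such a point exists because $\mathscr{G}^{*}$ is a nonempty, convex and weak-$*$-compact subset of $\mathscr{M}_1(X)$, so the Krein-Milman theorem applies. Since $p$ is fully supported, Theorem \ref{Teo-EP-fully-supp} guarantees that $\nu$ is fully supported, which in turn produces an injective linear embedding $\pi:C(X)\hooklongrightarrow L^1(\nu)$: if $\varphi\in C(X)$ satisfies $\int_X|\varphi|\,d\nu=0$, then the set $\{\varphi=0\}$ is dense, and by continuity $\varphi\equiv 0$.

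Next, because the extension $\mathbb{L}$ from Theorem \ref{Teo-Extension-L1} agrees with $\mathscr{L}$ on continuous functions and satisfies $\|\mathbb{L}\|_{\mathrm{op}}=\rho(\mathscr{L})=1$, every continuous eigenfunction $\varphi$ of $\mathscr{L}$ with $\mathscr{L}\varphi=\varphi$ is sent by $\pi$ to a harmonic function of $\mathbb{L}:L^1(\nu)\to L^1(\nu)$ associated to its operator norm. Suppose, for contradiction, that this eigenspace of $\mathscr{L}$ had dimension at least two, witnessed by linearly independent $\varphi_1,\varphi_2\in C(X)$. As $\pi$ is an injective linear map, the images $\pi(\varphi_1),\pi(\varphi_2)$ stay linearly independent in $L^1(\nu)$, so $\mathbb{L}$ would possess two linearly independent harmonic functions, contradicting Theorem \ref{teo-unica}. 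Hence the eigenspace of $\mathscr{L}$ has dimension at most one, that is, dimension zero or one.

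The main obstacle is precisely the injectivity of $\pi$, i.e. the guarantee that $\varphi\mapsto\pi(\varphi)$ destroys no linear independence; this is exactly where the full-support hypothesis on $p$ is indispensable. Without it, a conformal measure could live on a proper closed subset of $X$, and a nonzero continuous eigenfunction could collapse to the zero class in $L^1(\nu)$, breaking both the injectivity of $\pi$ and the argument. Theorem \ref{Teo-EP-fully-supp}---that full support of $p$ forces every conformal measure to be fully supported---is what removes this difficulty.
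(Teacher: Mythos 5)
Your proposal is correct and follows essentially the same route as the paper: pick an extreme conformal measure via Krein--Milman, use Theorem \ref{Teo-EP-fully-supp} to get full support and hence an injective embedding $C(X)\hooklongrightarrow L^1(\nu)$ preserving linear independence, and then invoke Theorem \ref{teo-unica} to rule out two independent harmonic functions for the extension $\mathbb{L}$. The only cosmetic difference is that the paper also passes through Lemma \ref{lema-eigenfunctions-positivas} to normalize signs before applying Theorem \ref{teo-unica}, a step your direct appeal to that theorem makes unnecessary.
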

\begin{proof}
Up to adding a constant to the potential $f$, we can assume that $\rho(\mathscr{L})=1$.
Since $X$ is a compact metric space we have that $\mathscr{G}^{*}$ is convex and compact.
By Krein-Milman Theorem we have that the set of extreme points of $\mathscr{G}^{*}$ is necessarily not empty.
Take any $\nu\in \mathrm{ex}(\mathscr{G}^{*})$.
Suppose that $\varphi$ and $\psi$ are two linearly independent
continuous harmonic functions for $\mathscr{L}$,
associated to the eigenvalue one. Note that we are in conditions to apply Theorem \ref{Teo-EP-fully-supp}
and therefore we have that the probability measure $\nu$ is fully supported.
From this fact follows that $[\varphi]_\nu \not = [\psi]_\nu$.
Indeed, they are linearly independent in $L^1(\nu)$.

Now, we consider  the extension $\mathbb{L}:L^1(\nu)\to L^1(\nu)$
of the transfer  operator $\mathscr{L}$ provided by Theorem \ref{Teo-Extension-L1}.
Of course, $[\varphi]_\nu$ and $[\psi]_\nu$
are harmonic functions of $\mathbb{L}$, associated to the eigenvalue one.
Then Lemma \ref{lema-eigenfunctions-positivas}
ensure that  they have definite signs, which can be assumed to be  positive.
But Theorem \ref{teo-unica} implies $[\varphi]_\nu= \lambda [\psi]_\nu$,
for some real number $\lambda$,
which is a contradiction.
\end{proof}

We will end this section presenting a result on the problem of the dimension of the maximal eigenspace
of $\mathbb{L}:L^1(m) \rightarrow L^1(m)$, when $m$ is as before a conformal measure,
but not necessarily an extreme point in $\mathscr{G}^{*}$. Nevertheless, first, we need
the following lemma.

\begin{lemma}\label{lema-conditional-coformal-reciprocal}
Let  $\nu, \mu \in \mathrm{ex}(\mathscr{G}^*)$ be distinct measures
and $m$ a non-trivial convex combination of them, $m=t\nu+(1-t)\mu$.
Then there is a $\mathscr{B}(X)$-measurable set, $B$, for which
$\nu(B)=1$, $\mu(B^c)=1$ and the following equations hold
\begin{equation}
\begin{split} \label{eq:L*-converse}
\mathbb{L}^*\mathds{1}_B=\mathds{1}_B \quad \text{ and } \quad \mathbb{L}^*\mathds{1}_{B^c}=\mathds{1}_{B^c},
\end{split}
\end{equation}
where $\mathbb{L}$ is the extension of $\mathscr{L}$ to $L^1(m)$.
Moreover if $D\in\mathscr{B}(X)$ is another set satisfying  $\mathbb{L}^{*}\mathds{1}_{D}=\mathds{1}_{D}$ and
$0<m(D)<1$,
then either $m(D\bigtriangleup B)=0$ or $m(D\bigtriangleup B^c )=0$,
where $D\bigtriangleup B$ denotes the symmetric difference between $D$ and $B$.
\end{lemma}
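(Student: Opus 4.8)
The plan is to reduce everything to the Radon–Nikodym densities of $\nu$ and $\mu$ against $m$. Write $g=d\nu/dm$ and $h=d\mu/dm$. Since $\nu,\mu\leqslant(\min\{t,1-t\})^{-1}m$, both densities lie in $L^\infty(m)$ and satisfy $tg+(1-t)h=\mathds{1}$ $m$-a.e. First I would record that conformality of $\nu$ translates, exactly as in the derivation of $\mathbb{L}^*\mathds{1}=\mathds{1}$ in the text, into $\mathbb{L}^*g=g$ in $L^\infty(m)$ (and likewise $\mathbb{L}^*h=h$): indeed $\langle g,\mathbb{L}\varphi\rangle_m=\int\mathscr{L}\varphi\,d\nu=\int\varphi\,d\nu=\langle g,\varphi\rangle_m$ for continuous $\varphi$, and $C(X)$ is dense in $L^1(m)$. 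Thus both densities are fixed points of $\mathbb{L}^*$. The whole statement follows once I show that $g$ is $m$-a.e. two-valued, taking only the values $0$ and $1/t$: then $B:=\{g=1/t\}$ satisfies $g=\tfrac1t\mathds{1}_B$, so $\mathbb{L}^*\mathds{1}_B=\mathds{1}_B$ is immediate from $\mathbb{L}^*g=g$, while $\mathbb{L}^*\mathds{1}_{B^c}=\mathds{1}_{B^c}$ follows by linearity from $\mathbb{L}^*\mathds{1}=\mathds{1}$; moreover $\nu(B)=\int_B g\,dm=1$ and, since $h=\tfrac{1-tg}{1-t}$ vanishes on $B$, also $\mu(B^c)=1$.

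The crucial and most delicate point is the passage from ``$g$ is $\mathbb{L}^*$-fixed'' to ``the level sets $\{g>c\}$ are invariant''. This does not follow from soft positivity arguments: because $\mathbb{L}\mathds{1}\neq\mathds{1}$ in general, one cannot integrate the inequality $\mathbb{L}^*(g\wedge c)\leqslant g\wedge c$ against $m$ to close it to an equality, and indeed for a general positive unital $L^\infty$-contraction the fixed space need not be a sublattice. The way around this is Hopf's theory: since $\mathbb{L}^*\mathds{1}=\mathds{1}$, the measure $m$ is a finite invariant measure for $\mathbb{L}$, which forces the Markov process $\mathbb{L}$ on $L^1(m)$ to be conservative (there is no dissipative part, since $\int\sum_{n}\mathbb{L}^n f\,dm=\infty$ whenever $f\geqslant0$ and $\int f\,dm>0$). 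For a conservative contraction the space of $\mathbb{L}^*$-fixed functions coincides with $L^\infty$ of the invariant $\sigma$-algebra $\mathscr{I}=\{A\in\mathscr{B}(X):\mathbb{L}^*\mathds{1}_A=\mathds{1}_A\}$; in particular $g$ is $\mathscr{I}$-measurable and every level set $\{g>c\}$ lies in $\mathscr{I}$. Justifying this conservativity step and the resulting $\mathscr{I}$-measurability is where the real work sits, and it is precisely the input from Hopf's theory \cite{hopf} on which this section relies.

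Granting invariant level sets, I would close the argument with extremality. For any $S\in\mathscr{I}$ the product $g\mathds{1}_S$ is again $\mathscr{I}$-measurable, hence $\mathbb{L}^*$-fixed, so $\nu(\,\cdot\,\cap S)=(g\mathds{1}_S)\,m$ is an eigenmeasure of $\mathscr{L}^*$ by Proposition \ref{prop-conditional-coformal}; the decomposition $\nu=\nu(\,\cdot\,\cap S)+\nu(\,\cdot\,\cap S^c)$ together with extremality of $\nu$ gives $g\mathds{1}_S=\nu(S)g$ $m$-a.e., i.e. $\mathds{1}_S=\nu(S)$ on $\{g>0\}$, so $\nu(S)\in\{0,1\}$ (this is the $0$–$1$ law in the spirit of Lemma \ref{lema-extreme-01-law}). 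Applying this to $S=\{g>c\}$ shows that $c\mapsto\nu(\{g>c\})$ takes only the values $0$ and $1$, forcing $g$ to be $m$-a.e. equal to a constant $c^*$ on $\{g>0\}$. Running the symmetric argument for the extreme measure $\mu$ and its density $h$, and using $tg+(1-t)h=\mathds{1}$, I would rule out $c^*\neq1/t$: otherwise $h$ would be $m$-a.e. strictly positive and hence, by the same two-valuedness, $m$-a.e. constant, giving $\mu\propto m$ and therefore $\mu=\nu$, a contradiction. Thus $c^*=1/t$, $g\in\{0,1/t\}$ $m$-a.e., and $B=\{g=1/t\}$ is the desired set.

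For the final uniqueness clause, let $D\in\mathscr{B}(X)$ be invariant with $0<m(D)<1$. The $0$–$1$ law just established, applied to both extreme measures, gives $\nu(D),\mu(D)\in\{0,1\}$. Since $m(D)=t\nu(D)+(1-t)\mu(D)$, the constraint $0<m(D)<1$ excludes $(\nu(D),\mu(D))=(0,0)$ and $(1,1)$, leaving only $(1,0)$ or $(0,1)$. In the first case $\nu(D\bigtriangleup B)=0$ (both measures live on $B$) and $\mu(D\bigtriangleup B)=\mu(D)=0$, whence $m(D\bigtriangleup B)=0$; in the second case the same bookkeeping against $B^c$ yields $m(D\bigtriangleup B^c)=0$. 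This dichotomy is exactly the assertion, and I expect the conservativity and $\mathscr{I}$-measurability step of the second paragraph to be the only genuinely hard part, the remainder being extremality bookkeeping through Proposition \ref{prop-conditional-coformal}.
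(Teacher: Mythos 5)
Your reduction to the two-valuedness of $g=d\nu/dm$ and the final bookkeeping are sound, but the step you yourself flag as the crux --- conservativity of $\mathbb{L}$ on $L^1(m)$ --- is not established, and the justification you give for it is a non sequitur. From $\mathbb{L}^*\mathds{1}=\mathds{1}$ you only get that $\mathbb{L}$ preserves integrals, $\int\mathbb{L}\varphi\,dm=\int\varphi\,dm$; this is \emph{not} the statement that $m$ is an invariant measure for the process (that would be $\mathbb{L}\mathds{1}=\mathds{1}$, which fails for non-normalized potentials), and it does not force conservativity. The parenthetical argument ``$\int\sum_n\mathbb{L}^nf\,dm=\infty$ whenever $\int f\,dm>0$, hence no dissipative part'' breaks down because an $m$-a.e.\ finite function can have infinite integral: a positive integral-preserving contraction can satisfy $\sum_n\mathbb{L}^nf<\infty$ a.e.\ (so the whole space is dissipative) while $\int\sum_n\mathbb{L}^nf\,dm=\sum_n\int f\,dm=\infty$. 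A toy example: on $\mathbb{N}$ with $m(\{k\})=2^{-k}$, the pre-dual on $L^1(m)$ of the Koopman operator of $k\mapsto k+1$ is a positive isometry with $T^*\mathds{1}=\mathds{1}$, yet $\sum_n T^n\mathds{1}_{\{1\}}(x)=2^{x-1}$ is finite everywhere, so the process is purely dissipative. Since Foguel's identification of the fixed space of $\mathbb{L}^*$ with $L^\infty(\mathscr{I})$ is precisely the conservative-case theorem, everything downstream of this point --- invariance of the level sets $\{g>c\}$, the $0$--$1$ law for sets in $\mathscr{I}$, and the uniqueness clause --- is left unsupported as written.

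The conclusion you need is nonetheless true, for a more concrete reason that repairs your route without any appeal to conservativity: by the integral representation (Propositions \ref{prop-extensao-l1} and \ref{mu_f satisfies C1}) together with $\mathscr{L}^*m=\rho(\mathscr{L})m$, the adjoint acts as the Koopman operator, $\mathbb{L}^*u=u\circ\sigma$ $m$-a.e.\ (this identity is invoked in the proof of Proposition \ref{prop-spectrum}). Hence $\mathbb{L}^*g=g$ means $g\circ\sigma=g$ $m$-a.e., and the fixed space of $\mathbb{L}^*$ is then automatically stable under composition with Borel maps, so every level set $\{g>c\}$ is invariant; with that substitution your extremality bookkeeping goes through. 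For comparison, the paper takes a different route altogether: it produces $B$ directly from the tail $\sigma$-algebra, using that extreme points of $\mathscr{G}^*$ are determined by, and trivial on, $\mathscr{T}$ (Georgii's Theorem 7.7(c) as adapted in \cite{CLS20}), and then verifies $\mathbb{L}^*\mathds{1}_B=\mathds{1}_B$ by the computation of Proposition \ref{prop-conditional-coformal}. Your density-level-set construction, once the invariance of level sets is justified as above, is a genuine alternative that trades the tail-field input for the Koopman identity.
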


\begin{proof}
In the appendix of reference \cite{CLS20} the authors adapted 
Theorem 7.7 item (c) of \cite{MR2807681}
to our setting.
This result  says that any extreme point
in $\mathscr{G}^*$ is uniquely determined by
the values it takes on the elements of
the tail sigma-algebra $\mathscr{T}$.
Since $\nu$ and $\mu$ are distinct and determined by the values
taken on $\mathscr{T}$, there is at least one element
$B \in \mathscr{T}$
such that $\mu(B) \not = \nu(B)$.
But from Corollary 10.5 in \cite{CLS20} we
also know that any extreme point in $\mathscr{G}^*$
is trivial on $\mathscr{T}$,
meaning that, for every $B\in \mathscr{T}$, $\nu(B)=0$ or $\nu(B)=1$.
Then, supposing that $\mu(B)=0$, we have that $\nu(B)=1$.
We now have two disjoint sets, $B$ and $B^c$ with
$\nu(B)=1$ and $\mu(B^c)=1$.

Following the computations of  Proposition \ref{prop-conditional-coformal},
we will show that $\mathbb{L}^*\mathds{1}_B=\mathds{1}_B$.
Actually, it is enough to prove that
$\langle \mathbb{L}^*\mathds{1}_B, \varphi \rangle_m = \langle \mathds{1}_B,
\varphi \rangle_m$
for every $\varphi \in C(X,\mathbb{R})$.
Indeed, for an arbitrary continuous function $\varphi$ we have
\begin{align*}
\langle \mathbb{L}^*\mathds{1}_B, \varphi \rangle_m
 & = \langle \mathds{1}_B, \mathbb{L}\varphi \rangle_m
= \langle \mathds{1}_B, \mathscr{L}\varphi \rangle_m
= \int_{X}\mathds{1}_B \mathscr{L}\varphi\, dm         \\
 & =  \int_{X} \mathscr{L}\varphi \, dm(\cdot\cap B)
=  \int_X \mathscr{L}\varphi \, d(t\nu)
= \int_X \varphi \, d(t\mathscr{L}^*\nu)               \\
 & = \int_X \varphi \, d(t\nu)
= \int_X \mathds{1}_B \varphi \, dm
= \langle \mathds{1}_{B},\varphi \rangle_m
\end{align*}
The third equality above holds because $m(\cdot\cap B) = t\nu$.
Again by using the density of $C(X)$ in $L^1(m)$,
we conclude that $\mathbb{L}^*\mathds{1}_B=\mathds{1}_B$.
Recalling that	$\mathbb{L}^{*}\mathds{1}=\mathds{1}$
we get from previous identity that $\mathbb{L}^{*}\mathds{1}_{B^c}=\mathds{1}_{B^c}$.

It remains to prove  the $m$-almost everywhere uniqueness of $B$. More precisely, suppose that
there exist $D\in\mathscr{B}(X)$ with $0<m(D)<1$ such that $\mathbb{L}^{*}\mathds{1}_{D}=\mathds{1}_{D}$
and $m(D\bigtriangleup B)>0$.
Then we have to show that $m(D\bigtriangleup B^c)=0$.
\begin{figure}[h]
\centering
\includegraphics[width=4.6cm,height=2.5cm]{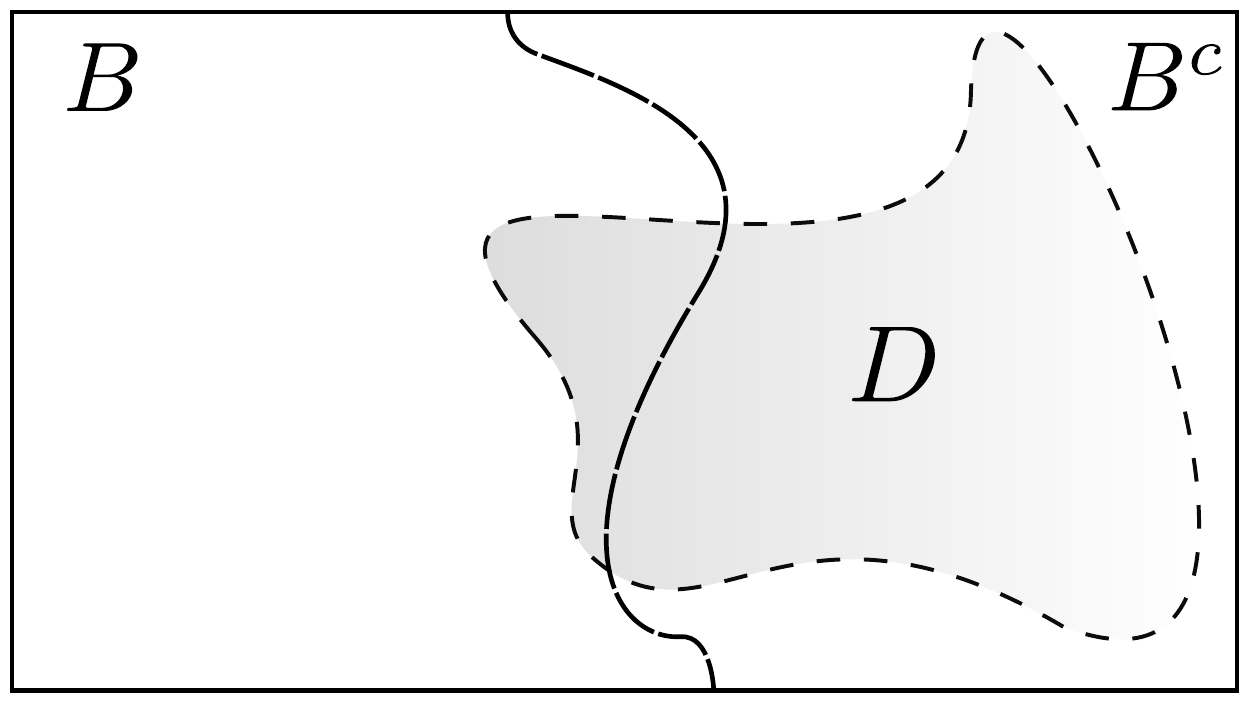}
\label{fig-lemma-symm-diff}
\caption{The sets $B$ and $D$ in the general case.}
\end{figure}

It follows from our assumption that  $m(D\cap B^c)>0$ or $m(D^c\cap B)>0$.
The analysis of both cases are similar and so we can assume that $m(D\cap B^c)>0$.

We claim that $\mathbb{L}^{*}\mathds{1}_{D\cap B^c} = \mathds{1}_{D\cap B^c}$.
Indeed, $\mathds{1}_{D} = \mathbb{L}^{*}(\mathds{1}_{D}\mathds{1}_{B} + \mathds{1}_{D}\mathds{1}_{B^c})$. By
multiplying both sides by $\mathds{1}_{B^c}$
we get
$\mathds{1}_{D}\mathds{1}_{B^c}
=
\mathds{1}_{B^c} \mathbb{L}^{*}(\mathds{1}_{D}\mathds{1}_{B})
+
\mathds{1}_{B^c} \mathbb{L}^{*}(\mathds{1}_{D}\mathds{1}_{B^c})
$.
To prove the claim it is enough to show that
$\mathds{1}_{B^c} \mathbb{L}^{*}(\mathds{1}_{D}\mathds{1}_{B})=0$ and
$\mathds{1}_{B^c} \mathbb{L}^{*}(\mathds{1}_{D}\mathds{1}_{B^c}) =
\mathbb{L}^{*}(\mathds{1}_{D}\mathds{1}_{B^c})$.
These two statements follow  immediately from the positiveness of $\mathbb{L}^{*}$.
In fact,
$0
\leqslant
\mathds{1}_{B^c} \mathbb{L}^{*}(\mathds{1}_{D}\mathds{1}_{B})
\leqslant
\mathds{1}_{B^c}\mathbb{L}^{*}(\mathds{1}_{B})
=
\mathds{1}_{B^c}\mathds{1}_{B} =0.
$

Since $m(D\cap B^c)>0$ it follows from definitions of $m$, $B$ and $t-1>0$
that $0<\mu(D\cap B^c)\leqslant 1$.
We claim that $\mathbb{L}_{\mu}^{*}\mathds{1}_{D\cap B^c}=\mathds{1}_{D\cap B^c}$
and $\mathbb{L}_{\nu}^{*}\mathds{1}_{D\cap B}=\mathds{1}_{D\cap B}$.
Once the claim is established, we can use Lemma \ref{lema-extreme-01-law} to ensure
that   $\mu(D\cap B^c)= 1$, because the measure of this set has to be positive. The equality $\nu(D\cap B)=0$
follows from the fact that it could be zero or one. If it were one, we would have immediately $m(D)=1$, which
is a contradiction.
This information, together with the definition of $B$	and
elementary properties of probability measures,
implies what we wanted to show
$
m(D\bigtriangleup  B^c)
= t\nu(D^c\cap B^c)+(1-t) \mu(D^c\cap B^c)
+ t\nu(D\cap B)+(1-t) \mu(D\cap B)=0
$.

Now, we prove the last claim. Since both equalities have similar proof it is enough
to prove the first one, that is,
$\mathbb{L}_{\mu}^{*}\mathds{1}_{D\cap B^c}=\mathds{1}_{D\cap B^c}$.
Indeed, we already know that
$\mathbb{L}^{*}\mathds{1}_{D\cap B^c}=\mathds{1}_{D\cap B^c}$.
From the results in Section \ref{sec-int-representations} we have that
$L\mathds{1}_{D\cap B^c}(x) = \mathds{1}_{D\cap B^c}(x)$
for almost all $x\in X$, with respect to $m$.  Since $\mu\ll m$ we get
the same conclusion, of the last equation, but for almost all $x$, with respect to $\mu$.
By applying the results of Section \ref{sec-int-representations} we get that
$\mathbb{L}_{\mu}^{*}\mathds{1}_{D\cap B^c}=\mathds{1}_{D\cap B^c}$,
thus finally completing the proof.
\end{proof}

\begin{remark}\label{remark:multidimensional}
Consider the multidimensional  case and $m=\sum_{i=1}^n t_i\nu_i$, where
for each $i\in 1,\ldots,n$, $t_i\in (0,1)$,  $\sum_{i=1}^n t_i=1$ and $\nu_i \in
\mathrm{ex}(\mathscr{G}^{*})$ are distinct conformal measures.
By applying Lemma \ref{lema-conditional-coformal-reciprocal}
we can find a Borel set $B_{1i}$ such that $\nu_1(B_{1i})=1$
and $\nu_i(B_{1i}^c)=1$. Set $B_1=\cap_{i=2}^n B_{1i}$,
it is clear that $\nu_1(B_1)=1$. On the other
hand, since $B_1^c = \cup_{i=2}^n B_{1i}^c \supseteq B_{1i}^c$
and $\nu_i(B_{1i}^c)=1$, we have $\nu_i(B_1^c)=1$, for
all $i \in 2,\ldots,n.$
By considering the set $B_1$ and repeating the arguments in the proof of
Lemma \ref{lema-conditional-coformal-reciprocal}
we get that $\mathbb{L}^*\mathds{1}_{B_1}=\mathds{1}_{B_1}$
and $\mathbb{L}^*\mathds{1}_{B_1^c}=\mathds{1}_{B_1^c}$. Its almost surely uniqueness
is obtained similarly.
\end{remark}

\begin{theorem} \label{th:eigenspace-dimension}
Let  $f$ be an arbitrary continuous potential and $p:\mathscr{B}(E)\to[0,1]$ a fully supported probability
measure,
and  $m\in\mathscr{G}^{*}$ be an arbitrary conformal measure.
Then the eigenspace of	$\mathbb{L}_{m}$ associated to its
spectral radius has dimension not bigger than
the cardinality of the set of extreme points in $\mathscr{G}^{*}$.
\end{theorem}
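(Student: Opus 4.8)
The plan is to turn the dimension count into a count of pairwise disjoint invariant sets, each of which hosts an extreme conformal measure. The starting observation is that the computation in the proof of Lemma \ref{lema-eigenfunctions-positivas}, \emph{up to its very last step}, never uses that the conformal measure is an extreme point: it uses only that $\mathbb{L}=\mathbb{L}_{m}$ is a positive contraction preserving $m$ (i.e.\ $\int \mathbb{L}u\,dm=\int u\,dm$). Thus, for the present arbitrary $m$ and any harmonic $u\in L^1(m)$, the positive part $u^+$ is again harmonic. Consequently $u^-=(-u)^+$ and $|u|=u^++u^-$ are harmonic, and since $u\vee v=\tfrac12(u+v+|u-v|)$ and $u\wedge v=\tfrac12(u+v-|u-v|)$, the space of harmonic functions (SHF) is a Riesz subspace (sublattice) of the Archimedean Riesz space $L^1(m)$.

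First I would show that the support of a non-negative harmonic function is an invariant set, by rerunning the argument of Theorem \ref{teo-positivo} — where, once more, extremeness enters only in the final contradiction and not in the derivation. Let $h\geqslant 0$ be harmonic and $B=\{h>0\}$. From $\mathbb{L}h=h$ we get $\langle \mathbb{L}^*\mathds{1}_B,h\rangle_m=\langle \mathds{1}_B,\mathbb{L}h\rangle_m=\langle \mathds{1}_B,h\rangle_m=\|h\|_1$; since $0\leqslant \mathbb{L}^*\mathds{1}_B\leqslant 1$ and $h\geqslant 0$, this forces $(1-\mathbb{L}^*\mathds{1}_B)h=0$ $m$-a.e., hence $\mathbb{L}^*\mathds{1}_B\geqslant \mathds{1}_B$. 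The $L^1$-contraction estimate on characteristic functions gives $\|\mathbb{L}^*\mathds{1}_B\|_1=\langle \mathds{1}_B,\mathbb{L}\mathds{1}\rangle_m\leqslant \|\mathds{1}_B\|_1$, so the pointwise inequality must be an equality: $\mathbb{L}^*\mathds{1}_B=\mathds{1}_B$. Therefore $B$ is invariant and, by Proposition \ref{prop-conditional-coformal}, $m(\,\cdot\mid B)\in\mathscr{G}^{*}$ whenever $m(B)>0$.

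These two facts settle the finite-dimensional case. Assume $\dim(\mathrm{SHF})=k<\infty$. A finite-dimensional Archimedean Riesz space is Riesz isomorphic to $\mathbb{R}^{k}$, so SHF admits a basis $h_1,\dots,h_k\geqslant 0$ of harmonic functions that are pairwise disjoint ($h_i\wedge h_j=0$ for $i\neq j$); equivalently the supports $B_i=\{h_i>0\}$ are pairwise $m$-disjoint. By the previous step each $B_i$ is invariant with $m(B_i)>0$, so $m(\,\cdot\mid B_i)\in\mathscr{G}^{*}$. As $\mathscr{G}^{*}$ is compact, convex and metrizable, Choquet's theorem \cite{MR80720} furnishes a probability $P_i$ on $\mathrm{ex}(\mathscr{G}^{*})$ with barycenter $m(\,\cdot\mid B_i)$; from $1=m(B_i\mid B_i)=\int \mu(B_i)\,dP_i(\mu)$ and $\mu(B_i)\leqslant 1$ we get $\mu(B_i)=1$ for $P_i$-a.e.\ $\mu$, so there is an extreme point $\mu_i\in\mathrm{ex}(\mathscr{G}^{*})$ with $\mu_i(B_i)=1$. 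Since the $B_i$ are pairwise disjoint, $\mu_1,\dots,\mu_k$ are mutually singular, hence distinct, exhibiting $k$ distinct extreme points and proving $k\leqslant |\mathrm{ex}(\mathscr{G}^{*})|$.

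For the general cardinality statement I would run the same mechanism over a maximal pairwise-disjoint family of non-negative harmonic functions; such a family is necessarily countable (disjoint sets of positive $m$-measure are), its supports are invariant, and — by maximality together with the fact that $\mathbb{L}$ preserves functions supported on an invariant set — every harmonic function is supported on their union. The Choquet extraction above then injects this family into $\mathrm{ex}(\mathscr{G}^{*})$. I expect the one genuine obstacle to lie precisely here: to upgrade this to the sharp bound one must show that each invariant piece of the family contributes at most one dimension to SHF, which is the content of Theorem \ref{teo-unica} once the piece carries an \emph{extreme} conditional measure, and in a non-atomic situation this forces one to decompose the invariant $\sigma$-algebra into its ergodic components. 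The finite-dimensional heart of the argument, by contrast, is clean once the sublattice property and the invariance of supports are in hand, and it already shows why the dimension is maximized at the barycenter of $\mathscr{G}^{*}$, since that is the conformal measure whose supporting invariant sets see every extreme point.
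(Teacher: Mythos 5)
Your route is genuinely different from the paper's, and its finite\nobreakdash-dimensional core is sound. The paper argues top--down from the measures: it writes $m$ as a finite convex combination of extreme points, uses Lemma \ref{lema-conditional-coformal-reciprocal} (built on tail-triviality of extreme conformal measures) together with Remark \ref{remark:multidimensional} to manufacture pairwise disjoint invariant sets carrying the extreme measures, shows that multiplying a harmonic function by the indicator of such a set again gives a harmonic function, and then invokes Theorems \ref{teo-positivo} and \ref{teo-unica} on each piece. You argue bottom--up from the functions: you correctly observe that the computation in Lemma \ref{lema-eigenfunctions-positivas} yields $\mathbb{L}u^{+}=u^{+}$ for an \emph{arbitrary} $m\in\mathscr{G}^{*}$, so the SHF is a sublattice of $L^{1}(m)$; you extract a disjoint positive basis, prove invariance of the supports by the argument of Theorem \ref{teo-positivo}, and inject the supports into $\mathrm{ex}(\mathscr{G}^{*})$ via Proposition \ref{prop-conditional-coformal} and Choquet. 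This avoids Lemma \ref{lema-conditional-coformal-reciprocal} entirely and makes the extremality of the barycenter transparent. Two details to tidy: the sets $B_i$ are only $m$-a.e.\ disjoint, and since the extreme points $\mu_i$ produced by Choquet need not be absolutely continuous with respect to $m$, you should pass to genuinely disjoint modifications $B_i'$ (an $m$-null change, so $m(B_i'\mid B_i)=1$ still) before concluding $\mu_i\neq\mu_j$; and the identity $m(B_i'\mid B_i)=\int\mu(B_i')\,dP_i(\mu)$ requires the standard extension of the barycenter formula from $C(X)$ to bounded Borel functions.

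The genuine gap is the one you half-identify at the end, but it sits in a different place than you locate it. Your argument proves: if $\dim(\mathrm{SHF})=k<\infty$ then $k\leqslant\#\mathrm{ex}(\mathscr{G}^{*})$. The theorem also requires ruling out $\dim(\mathrm{SHF})=\infty$ when $\#\mathrm{ex}(\mathscr{G}^{*})=n<\infty$, and your last paragraph does not deliver this: the repair you sketch (a maximal disjoint family plus ``each invariant piece contributes at most one dimension,'' which you rightly note would force an ergodic decomposition of each piece) is much more than is needed. The missing statement is purely Riesz-theoretic: an Archimedean Riesz space in which every system of pairwise disjoint nonzero positive elements has at most $n$ members has dimension at most $n$ (Luxemburg--Zaanen); contrapositively, $\dim(\mathrm{SHF})>n$ already produces $n+1$ pairwise disjoint nonzero positive harmonic functions, and your Choquet extraction then produces $n+1$ distinct extreme points, a contradiction. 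With that single fact (or the elementary iteration of your $c^{*}=\sup\{c:u-cv\geqslant 0\}$ argument that it packages) your proof closes completely, and the whole last paragraph about maximal families and ergodic components can be deleted.
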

\begin{proof}
The arguments in this proof involve, simultaneously, different extensions
of transfer operator $\mathscr{L}:C(X)\to C(X)$. To avoid confusions these extensions
will be indexed by the conformal measure as in notation $\mathbb{L}_\nu$.
It has the advantage of let clear on which Lebesgue  space the extension acts.

As before, there is no loss of generality in assuming that $\rho(\mathscr{L})=1$.
Therefore for each conformal measure $m,\nu,\mu\in \mathscr{G}^{*}$ we have that the extensions
$\mathbb{L}_m$,
$\mathbb{L}_\nu$ and $\mathbb{L}_\mu$, provided by Theorem \ref{Teo-Extension-L1},
define themselves Markov processes.

Of course, there is nothing to prove if $\#\mathrm{ex}(\mathscr{G}^*)=+\infty$,
thus in what follows we assume that the cardinality of the set of
extreme points of $\mathscr{G}^{*}$ is finite.

In case $\mathrm{ex}(\mathscr{G}^{*})$ is a singleton
the conclusion follows immediately from Theorem \ref{teo-unica}.
In the sequel we will assume that $\#\mathrm{ex}(\mathscr{G}^{*}) =2$.
The generalization of the following argument
to the case of a convex combination
of a finite number of extreme measures
is straightforward and involves the
application of Remark \ref{remark:multidimensional}.
It is omitted to avoid an unnecessary
cumbersome notation.

We denote by $\mathbb{L}_{m}$ be the extension of $\mathscr{L}:C(X)\to C(X)$,
provided by Theorem \ref{Teo-Extension-L1},  to $L^1(m)$, corresponding to
the measure $m = t\nu + (1-t)\mu$.
Lemma \ref{lema-conditional-coformal-reciprocal}
implies that there is a unique (modulo-$m$) set $B\in \mathscr{B}(X)$
such that $\nu(B)=1$, $\mu(B^c)=1$,
$\mathbb{L}^{*}_m \mathds{1}_B = \mathds{1}_B$
and $\mathbb{L}^{*}_m \mathds{1}_{B^c} = \mathds{1}_{B^c}$.

\medskip

Note that one of the following	three possibilities occurs:
\begin{itemize}
\item[\textit{i})] the eigenvalue problem  $\mathbb{L}_{m}[u]_{m}=[u]_{m}$ has only the trivial solution,
      i.e., $[u]_{m}=0$;
\item[\textit{ii})] any harmonic function $[u]_{m}$ for $\mathbb{L}_{m}$ is such that
      $[\mathds{1}_{B}u]_{m}\neq 0$, but\break $[\mathds{1}_{B^c}u]_{m}= 0$ and vice-versa;
\item[\textit{iii})] there is a harmonic function $[u]_{m}$ such that  both $[\mathds{1}_{B}u]_{m}\neq
      0$, and\break
      $[\mathds{1}_{B^c}u]_{m}\neq 0$.
\end{itemize}
Of course, in the first case the dimension of the maximal eigenspace is zero and the theorem is proved.
We will show next that in the second case, the
maximal eigenspace is one-dimensional. In this case we will  say that the harmonic functions are
supported on either $B$ or $B^c$, depending on where $u$ does not vanish.
Finally,  in the third case we will show that the maximal eigenspace is spanned by two linearly independent
functions $\{[\mathds{1}_{B}u]_{m},[\mathds{1}_{B^c}u]_{m}  \}$, and therefore will be a two-dimensional
space subspace
of $L^1(m)$.

Let us assume that \textit{iii}) holds. We are choosing to handle this case firstly because
the arguments involved in it work similarly in case $\textit{ii})$.

We are going to show that if $[v]_{m}$ is any other harmonic function
then $[v]_{m} = \alpha [\mathds{1}_{B}u]_{m}+\beta [\mathds{1}_{B^c}u]_{m}$, for some
$\alpha,\beta \in\mathbb{R}$.

Firstly, we will show that both $[\mathds{1}_{B}u]_{m}$ and $[\mathds{1}_{B^c}u]_{m}$
are two linearly independent harmonic functions of $\mathbb{L}_{m}$.
The linear independence of these two functions is obvious.
Lets us show that $[\mathds{1}_{B}u]_{m}$ is a harmonic function of $\mathbb{L}_{m}$.
Note that
\begin{equation}\label{eq:split projection}
\begin{split}
\mathbb{L}_{m}[\mathds{1}_{B}u]_{m} = & \mathbb{L}_{m}[u]_{m} - \mathbb{L}_{m}[\mathds{1}_{B^c}u]_{m}
= [u]_{m} - \mathbb{L}_{m}[\mathds{1}_{B^c}u]_{m} \\
= & [\mathds{1}_{B}u]_{m} + [\mathds{1}_{B^c}u]_{m}- \mathbb{L}_{m}[\mathds{1}_{B^c}u]_{m}
\end{split}
\end{equation}
Recalling that
$\mathbb{L}_m^{*}\mathds{1}_{B}=\mathds{1}_{B}$ and using the above equality,
we obtain
\begin{equation*}
\begin{split}
\lVert	\mathds{1}_{B}u \rVert_{L^1(m)}
&=
\langle \mathds{1}_{B}, [\mathds{1}_{B} u] \rangle_m
=
\langle \mathds{1}_{B}, \mathbb{L}_{m}[\mathds{1}_{B} u] \rangle_m
\\
&=
\langle \mathds{1}_{B}, [\mathds{1}_{B}u] + [\mathds{1}_{B^c}u]- \mathbb{L}_{m}[\mathds{1}_{B^c}u] \rangle_{m}
\\
&=
\lVert u\mathds{1}_{B}	\rVert_{L^1(m)} - \langle \mathds{1}_{B}, \mathbb{L}_{m}[\mathds{1}_{B^c}u]
\rangle_m,
\end{split}
\end{equation*}
which implies $\mathbb{L}_{m}[\mathds{1}_{B^c}u]_{m} = 0$ in $B$. Similarly, we get
$\mathbb{L}_{m}[\mathds{1}_{B}u]_{m} = 0$ in $B^c$.
By plugging this back in \ref{eq:split projection} we get that
$\mathbb{L}_{m}[\mathds{1}_{B}u]_{m} = [\mathds{1}_{B}u]_{m}$ and
consequently $\mathbb{L}_{m}[\mathds{1}_{B^c}u]_{m} = [\mathds{1}_{B^c}u]_{m}$.

From definition of $m$,  $\mu(B)=0$, and $\mathbb{L}_{m}[\mathds{1}_{B}u]_m = [\mathds{1}_{B}u]_m$ it follows
that $\mathbb{L}_\nu[\mathds{1}_{B}u]_\nu = [\mathds{1}_{B}u]_\nu$.
The conformal measure $\nu\ll m$ and therefore we get from item \textit{iii}) that $[\mathds{1}_{B}u]_\nu\neq
0$.
Since $\nu\in\mathrm{ex}(\mathscr{G}^{*})$  we can apply  Theorem \ref{teo-positivo} to ensure that
$[\mathds{1}_{B}u]_\nu$ is positive $\nu$-almost everywhere.

Now, let $[v]_{m}$ be an arbitrary harmonic function of $\mathbb{L}_{m}$. By repeating the above
steps we conclude that $[\mathds{1}_{B}v]_{\nu}$ is also a $\nu$-almost everywhere positive harmonic function of
$\mathbb{L}_{\nu}$.
But Theorem \ref{teo-unica} states that there is some $\alpha\in \mathbb{R}$
such that $[\mathds{1}_{B}v]_{\nu}=\alpha [\mathds{1}_{B}u]_{\nu}$.
From the definition of $B$ and $m$ we conclude that
the last equality actually implies $[\mathds{1}_{B}v]_{m}=\alpha [\mathds{1}_{B}u]_{m}$.
By repeating this argument for $[\mathds{1}_{B^c}v]_{\nu}$ we get that
$
[v]_{m}  = \alpha [\mathds{1}_{B}u]_{m}+ \beta [\mathds{1}_{B^c}u]_{m}
$,
which finishes the proof of the theorem.
\end{proof}

\section{Applications and Examples}\label{sec-exp-app}

\subsection{Conformal Measures are Fully Supported}\label{sec-conf-full-supp}

In this section, we prove that any conformal measure $\nu\in\mathscr{G}^{*}$, 
associated with a continuous potential, is fully supported, that is,  $\mathrm{supp}(\nu)=X$ 
if the a priori measure used in the construction of the transfer operator is also fully supported. 
This fact allows us to show that $\mathscr{L}$ extends continuously to $L^1(\nu)$ (Theorem \ref{Teo-Extension-L1}), 
and has some interesting applications, 
as those related to the support of equilibrium states in Subsection \ref{sec-app-EqSt}. 

When one has a finite alphabet, the balls generating the topology 
are also cylinders, and the indicator functions of the cylinders are continuous functions. 
Using this fact, the proof of the full support of the conformal measures is immediate. 
For infinite alphabets, indicator functions of balls are, in general, 
not continuous, and we need to construct an auxiliary continuous function 
to relate the balls in the alphabet $E$ with the balls in the product space $X$.

\begin{theorem}\label{Teo-EP-fully-supp}
	Let $p\in \mathscr{M}_{1}(E)$ be an a priori measure such that $\mathrm{supp}(p)=E$.
	Then for any $\nu\in\mathscr{G}^*$ we have that $\mathrm{supp}(\nu)=X$.
\end{theorem}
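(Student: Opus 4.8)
The plan is to reduce the claim to showing that every nonempty basic open cylinder has strictly positive $\nu$-measure, and then to feed a carefully chosen continuous approximation of the cylinder's indicator into the conformal relation \eqref{eigencondition}. As usual, I normalize $\rho(\mathscr{L})=1$ by adding a constant to $f$, which leaves $\mathscr{G}^{*}$ unchanged. Since $\mathrm{supp}(\nu)=X$ is equivalent to $\nu(U)>0$ for every nonempty open $U\subseteq X$, and every such $U$ contains a basic cylinder $C=\{x\in X: d_{E}(x_i,z_i)<r_i,\ i=1,\ldots,n\}$ for suitable $z\in X$ and radii $r_i>0$, it suffices to prove $\nu(C)>0$.

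First I would address the point flagged just before the statement: the indicator $\mathds{1}_{C}$ is not continuous when $E$ is infinite, so it cannot be tested directly against \eqref{eigencondition}. To get around this, I build for each coordinate an auxiliary Urysohn bump $g_i\colon E\to[0,1]$ with $g_i\equiv 1$ on $B(z_i,r_i/2)$ and $\mathrm{supp}(g_i)\subseteq B(z_i,r_i)$, and set $\varphi(x)=\prod_{i=1}^{n}g_i(x_i)$. Then $\varphi\in C(X)$ (it depends on finitely many coordinates), $0\leqslant\varphi\leqslant\mathds{1}_{C}$, and the full support of $p$ guarantees $\int_{E}g_i\,dp\geqslant p(B(z_i,r_i/2))>0$ for each $i$.

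The key computation is to iterate the transfer operator \eqref{def-Lf} exactly $n$ times. Prepending a word $w=(w_1,\ldots,w_n)\in E^{n}$ overwrites precisely the coordinates $1,\ldots,n$, so $\varphi(wx)=\prod_{i=1}^{n}g_i(w_i)$ is independent of $x$. Writing $S_nf=\sum_{k=0}^{n-1}f\circ\sigma^{k}$, continuity of $f$ on the compact space $X$ bounds the Birkhoff sum below, giving $\exp(S_nf)\geqslant e^{-n\|f\|_{\infty}}>0$, and Fubini together with the product structure of $p^{\otimes n}$ yields, uniformly in $x$,
\begin{equation*}
\mathscr{L}^{n}\varphi(x)\ \geqslant\ e^{-n\|f\|_{\infty}}\prod_{i=1}^{n}\int_{E}g_i\,dp\ \equiv\ c\ >\ 0.
\end{equation*}
Integrating against $\nu$ and applying \eqref{eigencondition} iteratively then gives $\nu(C)\geqslant\int_{X}\varphi\,d\nu=\int_{X}\mathscr{L}^{n}\varphi\,d\nu\geqslant c>0$, which closes the argument.

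The main obstacle is exactly the interplay the text anticipates: since one cannot test \eqref{eigencondition} against $\mathds{1}_{C}$ directly, the whole proof hinges on choosing the continuous $\varphi$ cleverly and, crucially, iterating $\mathscr{L}$ enough times that the bumps fall on the \emph{free} word coordinates, making $\varphi(wx)$ factor into a product over $w$. Once that is arranged, positivity is automatic: the compactness of $X$ supplies the lower bound on $\exp(S_nf)$, and the full support of $p$ supplies the positivity of each $\int_{E}g_i\,dp$. The shift dynamics and the precise values of $f$ play no further role beyond boundedness.
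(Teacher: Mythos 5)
Your proposal is correct and follows essentially the same route as the paper's proof: reduce to a basic cylinder, replace its indicator by a continuous product of Urysohn bumps $\varphi=\prod_i g_i(x_i)$ squeezed between the half-radius and full-radius cylinders, iterate $\mathscr{L}$ exactly $n$ times so the prepended word covers the constrained coordinates, and bound $\mathscr{L}^{n}\varphi$ below using compactness of $X$ (for $\exp(S_nf)$) and full support of $p$ (for $\int_E g_i\,dp>0$). The only cosmetic differences are your normalization $\rho(\mathscr{L})=1$ and the slightly cruder bound $e^{-n\|f\|_\infty}$ in place of the paper's $(\min_X e^{f}/\rho(\mathscr{L}))^{n}$.
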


\begin{proof}
	Let $x \in X$, $r>0$ and $B(x,r)$ the open ball in $X$ centered in $x$ with radius $r$.
	We will show that $\nu(B(x,r))>0$.
	Since $d$ induces the product topology
	there are $n\in \mathbb{N}$ and $R\in \mathbb{R}$ such that $B(x,r)
	\supseteq B_E(x_1,R) \times ... \times B_E(x_{n},R) \times E^\mathbb{N} \equiv B(R)$.
	
	For each fixed $a\in E$ consider the auxiliary continuous function 
	$\psi_a:E \rightarrow [0,1]$ given by
	\[ \psi_a(b) = \max \left\{ 1 -\frac{2}{R}d_{E}\Big(b,B_E(a,R/2)\Big), 0 \right\}. \]

	Actually, this is a concrete instance of a Urysohn type function.
	Notice that $\psi_a$ is bounded by two characteristic
	functions of two balls centered at $a\in E$, i.e.,
	$ 1_{B_E(a,\frac{R}{2})} \leqslant \psi_a \leqslant 1_{B_E(a,R)}$.
	For each $x\in X$ consider also another auxiliary function $\Psi_x:X\to \mathbb{R}$ given by
	$\Psi_x(y) = \prod_{k=1}^{n} \psi_{x_k}(y_k)$. From the last inequality we get that
	\[
	1_{B(R/2)}(y)\leqslant	\Psi_x(y) \leqslant 1_{B(R)}(y), \qquad \forall y\in X.
	\]
	By using the elementary properties of the transfer operator we get
	\begin{align*}
		\nu(B(x,r))
		& \geqslant \nu(B(R))
		\\
		& = \int_X 1_{B(R)}\,	d\nu
		\geqslant
		\int_X \Psi_x \, d\nu                                                                            \\
		& = \frac{1}{\rho^n(\mathscr{L})}\int_X \Psi_x \, d[{\mathscr{L}^{*}}^n\nu]\,
		= \frac{1}{\rho^n(\mathscr{L})}\int_X  \mathscr{L}^n \Psi_x \, d \nu                             \\
		& = \frac{1}{\rho^n(\mathscr{L})}\int_X \mathscr{L}^{n-1} \left[ \int_E \exp \left(f(a_1 \cdot)
		\right) \Psi(a_1 \cdot)\, dp(a_1) \right](y)
		\, d \nu(y)                                                                                      \\
		& =\frac{1}{\rho^n(\mathscr{L})} \int_X \int_{E^n} \exp
		\left[\sum_{k=1}^n f(a_k\cdots a_n y) \right]
		\prod_{k=1}^{n} \psi_{x_k}(a_k)\, dp^n(a) d\nu(y)                                                \\[0.2cm]
		& \geqslant \left( \frac{\min_{x\in X} \exp(f(x)) }{\rho(\mathscr{L})} \right)^n
		\prod_{k=1}^{n} \left[ \int_{E} \psi_{x_k}(a_k)\, dp(a_k) \right]                                \\[0.2cm]
		& \geqslant
		\left( \frac{\min_{x\in X} \exp(f(x)) }{\rho(\mathscr{L})} \right)^n
		\prod_{k=1}^{n} p(B_E(x_k,R/2)) > 0,
	\end{align*}
	where the existence of a positive minimum follows from the compactness of $X$
	and the continuity of $f$, and $p(B_E(x_k,R/2))>0$ because we are assuming that $p$
	is fully supported.
\end{proof}

\begin{corollary}[Invariant sets are dense in $X$]
Let $p\in \mathscr{M}_{1}(E)$ be an a priori measure such that $\mathrm{supp}(p)=E$, 
$f$ a general continuous potential, $\nu\in\mathscr{G}^*$ and $L:L^1(\nu)\to L^1(\nu)$ the extension of $\mathscr{L}$.
If $B\in\mathscr{B}(X)$ is an invariant set, in the sense that 
$\mathbb{L}^*\mathds{1}_B = \mathds{1}_B$, then $B$ is dense in $X$. 
\end{corollary}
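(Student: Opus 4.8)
The plan is to deduce topological density from the full-support statement (Theorem \ref{Teo-EP-fully-supp}), applied not to $\nu$ itself but to the conformal measure obtained by conditioning $\nu$ on $B$. First I would isolate the only meaningful case. If $\nu(B)=0$, then $\mathds{1}_{B}$ is the zero class in $L^{\infty}(\nu)$ and the identity $\mathbb{L}^{*}\mathds{1}_{B}=\mathds{1}_{B}$ holds vacuously for every $\nu$-null Borel set, so the geometric content of the statement concerns a nontrivial invariant set; I therefore assume $\nu(B)>0$ from now on.

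Under the assumption $\nu(B)>0$, I would invoke Proposition \ref{prop-conditional-coformal}, which asserts that the conditional measure $\nu(\cdot\mid B)$, defined by $A\longmapsto \nu(A\cap B)/\nu(B)$, again lies in $\mathscr{G}^{*}$. Since the a priori measure $p$ is fully supported, Theorem \ref{Teo-EP-fully-supp} applies verbatim to this new conformal measure and gives $\mathrm{supp}\big(\nu(\cdot\mid B)\big)=X$. At the same time $\nu(\cdot\mid B)$ is concentrated on $B$, because $\nu(B^{c}\mid B)=\nu(B^{c}\cap B)/\nu(B)=0$. Full support then means precisely that every nonempty open set $U\subseteq X$ has $\nu(U\mid B)>0$, whence $\nu(U\cap B)>0$ and in particular $U\cap B\neq\emptyset$. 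As $U$ ranges over all nonempty open sets, $B$ meets every one of them, which is exactly the assertion $\overline{B}=X$.

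The step I expect to be the genuine, if modest, crux is the recognition that density cannot be read off from $\nu(B)>0$ together with $\mathrm{supp}(\nu)=X$ alone: a positive-measure set of a fully supported measure need not be dense (as for $[0,\tfrac12]$ under Lebesgue measure). The conformal structure must be \emph{transported} to the conditional measure, and this is exactly what Proposition \ref{prop-conditional-coformal} provides and what makes Theorem \ref{Teo-EP-fully-supp} applicable to $\nu(\cdot\mid B)$. Once that transport is in place, the remaining deduction — full support forcing $B$ to intersect every basic open cylinder — is routine. Note that the argument handles the whole range $0<\nu(B)\leqslant 1$ uniformly, so the limiting case $\nu(B)=1$ (where $B$ has full $\nu$-measure) is covered as well.
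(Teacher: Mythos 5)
Your argument is correct and follows essentially the same route as the paper's own proof: condition $\nu$ on the invariant set $B$ via Proposition \ref{prop-conditional-coformal} to obtain a new conformal measure, apply Theorem \ref{Teo-EP-fully-supp} to conclude that $\nu(\cdot\mid B)$ is fully supported, and deduce that $B$ meets every nonempty open set. Your explicit handling of the degenerate case $\nu(B)=0$ (which the paper's proof silently skips, and for which the stated claim is in fact vacuous or false, e.g.\ $B=\emptyset$) is a reasonable small addition but does not change the substance.
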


\begin{proof}
	Since we are assuming $B$ is an invariant set with respect to $\mathbb{L}$ 
	it follows from Proposition \ref{prop-conditional-coformal} that 
	$\nu(\cdot | B)$ is also a conformal measure.
	Fix a point $x\in X$. By applying Theorem \ref{Teo-EP-fully-supp} to $\nu(\cdot | B)$ 
	we can conclude that, for every $r>0$, the conditional probability $\nu(B(x,r)|B)>0$. 
	Therefore, at least one point of $B$ is in $B(x,r)$, otherwise $\nu(B(x,r)|B)=0$.
	Since this holds for every $x \in X$ and $r>0$,
	it follows that any invariant set $B$ is dense in $X$.
\end{proof}

\subsection{The Support of Equilibrium States}\label{sec-app-EqSt}

If we want to talk about equilibrium states when working with uncountable alphabets, 
the measure theoretical entropy needs to be 
replaced by another type of entropy in order to have a meaningful definition. 
In this context, there are two interesting alternatives 
generalizing the measure-theoretic entropy. Both   
give rise to concepts of equilibrium states carrying  physical interpretations. 
The first one appeared in  
Statistical Mechanics and it is thoroughly developed in 
\cite{MR2807681,MR517873,MR1241537}. The other one appeared in the Dynamical Systems literature in
reference \cite{MR3377291} in the context of shifts in compact metric alphabets.
Some years later, the authors of \cite{MR3897924} proved that they are actually equivalent to each other. 
For the sake of simplicity, we will adopt the statistical-mechanical way. 
Before presenting its definition, let us remind some of the needed concepts. 
Given $\mu$ and $\nu$ two arbitrary finite measures on $X$ and
$\mathscr{A}$ a sub-$\sigma$-algebra of $\mathscr{F}$, we define 
\[
\mathscr{H}_{\mathscr{A}}(\mu|\nu)
=
\begin{cases}
\displaystyle\int_{X} 
\frac{d\mu|_{\mathscr{A}}}{d\nu|_{\mathscr{A}}} 
\log  \left(\frac{d\mu|_{\mathscr{A}}}{d\nu|_{\mathscr{A}}} \right)
\, \text{d}\nu, &\ \text{if}\ \mu\ll\nu \ \text{on}\ \mathscr{A};
\\[0.5cm]
\infty,&\ \text{otherwise}.
\end{cases}
\]
This is in general a non-negative extended real number, and 
$\mathscr{H}_{\mathscr{A}}(\mu|\nu)$ is called {\it relative entropy} of $\mu$
with respect to $\nu$ on $\mathscr{A}$. 
Let $\pmb{p}=\prod_{i\in\mathbb{N}} p$ be the product measure constructed from our a priori measure $p$.
The entropy we want to consider will be denoted by 
$\mathtt{h}$, and for each shift-invariant probability measure $\mu\in \mathscr{M}_{\sigma}(X)$, 
it is defined as the limit
\[
\mathtt{h}(\mu)\equiv 
-\lim_{n\to\infty} \frac{1}{n} \mathscr{H}_{\mathscr{F}_n}(\mu|\pmb{p}),
\]
where $\mathscr{F}_n$ is the $\sigma$-algebra generated by the 
projections $\{\pi_j:X\to E: 1\leqslant j\leqslant n\}$.
The existence of  such limit follows from a  subadditivity arguments
and the details can be found in \cite{MR2807681}.
Although $\mathtt{h}(\mu)$ is always a non-positive number, 
it is related to the measure theoretical entropy $h_{\mu}(\sigma)$
by the formula $\mathtt{h}(\mu)+\log|E|=h_{\mu}(\sigma)$
when the alphabet $E$ is finite and the a priori measure is taken as the normalized counting
measure. So, both entropies determine the same set of equilibrium states in this particular 
context.  Back to the general case, Proposition 15.14 in \cite{MR2807681}
ensures us that in our context (compact metric alphabets) the mapping 
$\mathscr{M}_{\sigma}(X)\ni \mu\longmapsto \mathtt{h}(\mu)$ is affine and upper semi-continuous,
relative to the weak-$*$-topology, and therefore, for any continuous potential $f$,
there is at least one solution for the generalized version of the variational principle 
\[
\sup_{\mu\in \mathscr{M}_{\sigma}(X)}
\{\mathtt{h}(\mu) + \int_{X} f\, \text{d}\mu\}.
\]

Next, we show how to use the abstract results obtained in the previous section to get information
on the support of the equilibrium states. 
If $E$ is a compact alphabet and $f$ is a sufficiently regular (H\"older, Walters or Bowen) potential, 
then the set of equilibrium states is a singleton
as a consequence of the main result in \cite{MR3897924},
that is, $\mathrm{Eq}(f)=\{\mu\}$. 
Moreover, the unique equilibrium state $\mu$ is obtained in the traditional way by taking 
a suitable scalar multiple of the harmonic function $h$  
for $\mathscr{L}$ and the unique conformal measure in
$\mathscr{G}^{*}=\{\nu\}$, that is, $d\mu=h d\nu$.  

It is natural to expect that this construction also works for general continuous 
potentials that are less regular than those mentioned above. 
This indeed follows from a tedious 
adaptation of the arguments in \cite{MR3897924} or alternatively 
by a suitable extension of the Rokhlin Formula to our context.

Now, we have the following application. Suppose that $f$ is a low regular potential, 
but $\mathscr{G}^{*}$ is a singleton. This last assumption is not so restrictive since
this property holds generically in $C(X)$.   
If a harmonic function $h=\mathbb{L}h$ do exists
then, we can apply Theorem \ref{teo-positivo} to ensure the existence 
of an equilibrium state $\mu\in \mathrm{Eq}$ which is fully supported.
Indeed, the $\nu$-a.e. positivity of $h$
and Theorem \ref{Teo-EP-fully-supp} which ensures that $\mathrm{supp}(\nu)=X$ immediately implies
\[
\mathrm{supp}(\mu)=\mathrm{supp}(h\nu)=X.
\]

Note that this strategy to get a fully supported equilibrium state
still has a weakness which is the existence of the harmonic function $h$.
In general, $h$ is obtained by Functional Analysis arguments
such as the existence of certain invariant cones together with a compactness 
criterion, for example, the Arzel\`a-Ascoli theorem. 
But the extension of the harmonic analysis developed here provides an 
alternative to such methods. In fact, a careful reading of Theorem \ref{teo-positivo}
reveals that if we are able to prove that $\mathscr{L}^n(\mathds{1})(x)$ or it's Ces\`aro mean
can be controlled locally (in a set of positive $\nu$-measure), 
meaning local existence of non-trivial upper and lower bounds, then Theorem \ref{teo-positivo}
ensures that this control is actually global. 
So probability techniques such as sequence of backwards Martingales 
or microlocal-analysis now might play an important role in this problem.

\subsection{Two Dimensional Space of Harmonic Functions}\label{sec-Curie-Weiss}

In this section, we construct an
explicit example of a transfer operator, denoted by $\mathbb{L}_{\beta f}$,
where $\beta>0$ and $f$ is a potential given by \eqref{eq:cw_potencial},
for which its maximal eigenspace is two-dimensional. The potential $f$ and the results in this section are
inspired in the Curie-Weiss (mean-field) model for ferromagnetism.
The aim is to illustrate the results presented in the previous sections in a concrete and simple example,
especially including the construction of a base for the maximal eigenspace of this transfer operator.
There is a special feature of this example. The potential is not  continuous. 
This leads us to introduce a proper replacement for conformal measures. The measures that are going to play the same
role of those in $\mathscr{G}^{*}$, will be called here generalized conformal measures.
This concept is introduced in Definition \ref{def-gen-conf-measure}, and next, 
a motivation is presented.

\bigskip

In what follows $E=\{-1,1\}$, $p = \sum_{e\in E}\delta_{e}$ is the couting measure on $E$ and
$\nu\in\mathscr{M}_{1}(X)$  a probability
measure so that $(p,\nu)$ is a pair satisfying the hypothesis \ref{hip1}.

Fix a bounded and $\mathscr{B}(X)$-measurable potential $f:X\to\mathbb{R}$.
Then the  mapping sending $\varphi\in \mathcal{L}^1(\nu)$ to $L\varphi$ given by
\begin{equation} \label{def-transf-ope-CW-sections}
(L\varphi)(x) = \sum_{a\in E}\exp( f(ax) )\, \varphi(ax), \quad
\text{where} \ ax\equiv (a,x_1,x_2,\ldots)
\end{equation}
defines a linear operator from $\mathcal{L}^{1}(\nu)$ to itself.
Since we are assuming the hypothesis \ref{hip1} we have that the operator $L$
induces a positive and continuous linear operator $\mathbb{L}:L^1(\nu)\to L^1(\nu)$.

\begin{definition}[Generalized Conformal Measures]\label{def-gen-conf-measure}
\label{automedida}
Let $X=E^\mathbb{N}$ be a product space, where $E$ is a finite set and
$f:X \rightarrow \mathbb{R}$ a bounded and
a $\mathscr{B}(X)$-measurable potential.
A measure $\nu\in \mathscr{M}_1(X)$ such that:  $(p,\nu)$ satisfies \ref{hip1};
and the operator $\mathbb{L}$ induced by $L$ satisfies
$\mathbb{L}^*\mathds{1} = \|\mathbb{L}\|_{\mathrm{op}} \mathds{1}$ $\nu$-a.e.
will be called a generalized conformal measure associated to the potential $f$.
\end{definition}

The motivation for this definition comes from the following equivalence on the continuous potential case.
Recall that in Section \ref{sec:preliminaries} we have seen that, for every continuous
potentials $f\in C(X)$,
there is at least one measure for which
$\mathscr{L}^*\nu = \rho(\mathscr{L})\nu$. As mentioned early, this implies
$\mathbb{L}^*\mathds{1} = \rho(\mathscr{L}) \mathds{1}$.
Conversely, Proposition \ref{prop-conditional-coformal}
with $B=X$ shows that
$\mathbb{L}^{*}\mathds{1} = \rho(\mathscr{L})\mathds{1} \implies \mathscr{L}^{*}\nu =\rho(\mathscr{L}) \nu$.
Therefore  for continuous potentials,
\begin{equation*}
\mathscr{L}^{*} \nu = \rho(\mathscr{L})\nu
\quad \Longleftrightarrow \quad
\mathbb{L}^{*}\mathds{1} = \rho(\mathscr{L})\mathds{1} \quad \nu-a. e.
\end{equation*}
This equivalence is the motivation for Definition \ref{def-gen-conf-measure}.

The above definition could be formulated for a general compact metric alphabet $E$,
but this particular form is enough for our needs in this section and avoids some unnecessary technicalities.
We should also remark that although the space $X$ is compact, in this setting there is no guarantee, in general, that
the set of generalized conformal measure is not empty.

\bigskip

From now on, we work with $E=\{-1,+1\}$, $p$ as the counting measure on $E$,
and the potential given by
\begin{equation}
\label{eq:cw_potencial}
f(x) \equiv  x_1\, \limsup_{N\to\infty}  \frac{1}{N}  \sum_{k=2}^{N+1} x_k.
\end{equation}

Clearly, $(\beta f)_{\beta>0}$ is a family of bounded and
$\mathscr{B}(X)$-measurable potentials.
But, differently from the other sections of this paper, every potential in this family is
discontinuous, with respect to the product topology.

The potential defined in \eqref{eq:cw_potencial} is inspired in the mean-field or Curie-Weiss model for ferromagnetism.
It is one of the simplest models in Equilibrium Statistical Mechanics 
exhibiting the phase transition phenomenon,
see \cite{MR3456343,MR503332,MR2189669,MR3982275,CWLW20JSP,MR1008232} 
for more details from the Statistical Mechanics point of view.

Although $f$ is not a continuous potential
it will be useful in exemplifying some concepts of the previous sections.

The following discussion is motivated by some well-known properties of the Curie-Weiss model. 
This discussion aims to convince the reader that specific product measures 
are natural candidates to be generalized conformal measures and to explain 
how to compute the respective eigenvalues, in this case. 

Firstly, we consider the family of product measures $\mu_{\gamma}$, parameterized by $\gamma\in (-1,1)$,
and defined by
\begin{align}\label{def-bernoulli-measure}
\mu_\gamma(\{x_k=+1\}) = p; \quad
\mu_\gamma(\{x_k=-1\}) = 1-p
\end{align}
where the parameter $\gamma= 2p-1$ is the expected value of the coordinate functions, that is,
$\mathbb{E}_{\mu_\gamma}[x_k]=\gamma$, for every $k\in\mathbb{N}$.

Clearly, for any choice of  $\gamma\in (-1,1)$, we have that $(p,\mu_{\gamma})$ satisfies hypothesis \ref{hip1}
and therefore $L_{\beta f}$ induces an operator on $L^1(\mu_{\gamma})$.
Next, we compute the operator norm of $L_{\beta f}$. By definition
\[
\|  \mathbb{L}_{\beta f} \|
=
\int_{X} \sum_{a\in \{-1,1\} } \exp(\beta f(ax)) \, d\mu_{\gamma}(x)
=
\int_{X} \sum_{a\in \{-1,1\}  } \exp(\beta\, a\, m(ax)) \, d\mu_{\gamma}(x)
\]
where
\begin{align}\label{def-mag}
m(x) \equiv \limsup_{N\to\infty} \frac{1}{N}\sum_{k=2}^{N+1} x_k
=
\lim_{N\to\infty} \frac{1}{N}\sum_{k=1}^{N} x_k
= \gamma,  \qquad \mu_\gamma-\text{a.e.}
\end{align}
by the Law of Large Numbers.
Of course, for any $a\in \{-1,1\}$ we have $m(ax)=m(x)$ and therefore
\[
\|  \mathbb{L}_{\beta f} \|
=
\int_{X} e^{-\beta \gamma}+ e^{\beta \gamma} \, d\mu_{\gamma}(x) = 2\cosh(\beta \gamma).
\]
Until now, the parameter $\gamma$ is free, but in order to $\mu_{\gamma}$ to be
a generalized conformal measure for $\beta f$ the equality
$
\langle \mathds{1}, \mathbb{L}_{\beta f}u \rangle_{\mu_\gamma} = 2\cosh(\beta \gamma) \langle \mathds{1},u \rangle_{\mu_\gamma}
$
must holds for any $u\in L^1(\mu_{\gamma})$.
For this, it is enough that,
$\langle \mathds{1}, \mathbb{L}_{\beta f}\mathds{1}_{B} \rangle_{\mu_\gamma} = \cosh(\beta\gamma) \langle \mathds{1},\mathds{1}_{B} \rangle_{\mu_\gamma}$
for any indicator function $\mathds{1}_B$, where
$B\in \mathscr{B}(X)$.

Developing the left-hand side of the last equation we get
\begin{align*}
\int_X \mathbb{L}_{\beta f}\mathds{1}_B\, d\mu_\gamma
 & = \int_X \sum_{a\in\{-1,+1\}} \exp(a\beta m(ax))\mathds{1}_B(ax) \, d\mu_{\gamma}(x)                \\
 & =\int_X  \exp(-\beta\gamma) \mathds{1}_B(-1x) +\exp(\beta\gamma)\mathds{1}_B(+1x) \, d\mu_\gamma(x) \\
 & = \frac{e^{-\beta\gamma}\mu_{\gamma}(B\cap[-1])}{\mu_{\gamma}([-1])}
+\frac{e^{\beta\gamma}\mu_{\gamma}(B\cap [+1])}{\mu_{\gamma}([+1])} .
\end{align*}
By taking $B=[+1]$ and next $B=[-1]$, in the previous identity, and using
$\langle \mathds{1}, \mathbb{L}_{\beta f}\mathds{1}_{B} \rangle_{\mu_\gamma} = \cosh(\beta\gamma) \langle \mathds{1},\mathds{1}_{B} \rangle_{\mu_\gamma}$, we see that the following relations must be satisfied
\begin{align*}
e^{\pm \beta\gamma}
= \int_X \mathbb{L}_{\beta f}\mathds{1}_{[\pm 1]} d\mu_\gamma
= 2\cosh(\beta\gamma) \int_X \mathds{1}_{[\pm 1]} d\mu_\gamma
= 2\cosh(\beta\gamma) \mu_{\gamma}([\pm 1]).
\end{align*}
Since
$
p=\mu_{\gamma}([+1])
= e^{\beta\gamma}/2\cosh(\beta\gamma)
$
and
$
1-p
=
\mu_{\gamma}([-1])
= e^{-\beta\gamma}/2\cosh(\beta\gamma)
$,
we finally get that $\gamma$ has to be a solution of the following equation
\begin{align*}
\gamma = 2p -1 = \frac{e^{\beta\gamma} -
e^{-\beta\gamma}}{2\cosh(\beta\gamma)}
= \tanh(\beta\gamma).
\end{align*}

The equation $\gamma = \tanh(\beta \gamma)$ has either one or
three solutions, depending on the value of $\beta$.
If $0<\beta\leqslant 1$ then $\gamma=0$ is the unique solution.
Otherwise, if $\beta>1$
then there is some $\gamma(\beta)\in (0,1)$ such that
$-\gamma(\beta),0$ and $\gamma(\beta)$ are all the solutions to the equation.

Now, we can justify the previous steps. Firstly, take $\gamma$ satisfying
$\gamma = \tanh(\beta \gamma)$. 
Secondly, note that in the previous computation of 
$\langle \mathds{1}, \mathbb{L}_{\beta\gamma} \mathds{1}_B \rangle_{\mu_\gamma}$
we can use the values we got for $\mu_{\gamma}([-1])$ and $\mu_{\gamma}([+1])$. 
Therefore, we have that  
\begin{equation*}
\begin{split}
\int_X \mathbb{L}_{\beta f}\mathds{1}_B d\mu_\gamma
& =
\frac{e^{-\beta\gamma}\mu_\gamma(B\cap[-1])}{\mu_\gamma([-1])}
+\frac{e^{\beta\gamma}\mu_\gamma(B\cap [+1])}{\mu_\gamma([+1])} \\
& =
\cosh(\beta\gamma)[\mu_\gamma(B\cap[-1]) + \mu_\gamma(B\cap[+1])] = \cosh(\beta\gamma)\mu_\gamma(B) \\
& = \cosh(\beta\gamma)\int_X \mathds{1}_B d\mu_\gamma.
\end{split}
\end{equation*}
Since the last identity holds for any measurable set $B$ it follows that 
$\langle \mathds{1},\mathbb{L}_{\beta f} \mathds{1}_B\rangle_{\mu_\gamma} = \langle \mathds{1}, \mathds{1}_B
\rangle_{\mu_\gamma}$ and so $\mu_\gamma$ is indeed a generalized conformal
measure associate with the potential $\beta f$. 
This reasoning actually proves the following proposition.

\begin{proposition}[Generalized Conformal Measures -- Curie-Weiss Model]
\label{prop-auto-medidas-CW}
Let $f$ be the potential defined given by \eqref{eq:cw_potencial} and for each $\gamma\in (-1,1)$
let $\mu_\gamma$ be a Bernoulli measure as defined above.  Then
\begin{enumerate}
\item $\mu_{\gamma}$ is a generalized conformal measure, if and only if, $\gamma$ is a solution of the equation
      $\gamma = \tanh(\beta\gamma)$;
\item for any solution $\gamma$ of the above equation,
      $2\cosh(\beta\gamma)$ is an eigenvalue of $\mathbb{L}^{*}_{\beta f}$.
\end{enumerate}
\end{proposition}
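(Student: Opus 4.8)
The plan is to reduce the defining condition of a generalized conformal measure to an elementary system of scalar equations and then solve it. Recall from Definition \ref{def-gen-conf-measure} that $\mu_\gamma$ is a generalized conformal measure for $\beta f$ precisely when $(p,\mu_\gamma)$ satisfies \ref{hip1} and $\mathbb{L}_{\beta f}^*\mathds{1} = \|\mathbb{L}_{\beta f}\|_{\mathrm{op}}\mathds{1}$ holds $\mu_\gamma$-a.e. The first requirement is immediate for every $\gamma\in(-1,1)$, so the content lies in the dual eigen-equation. Passing to the bilinear form, this is equivalent to requiring $\int_X \mathbb{L}_{\beta f}\mathds{1}_B\, d\mu_\gamma = \|\mathbb{L}_{\beta f}\|_{\mathrm{op}}\,\mu_\gamma(B)$ for every $B\in\mathscr{B}(X)$, since the linear span of indicator functions is dense in $L^1(\mu_\gamma)$.

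First I would pin down the two ingredients that depend only on $\gamma$. Writing $m(x)=\limsup_N \frac{1}{N}\sum_{k=2}^{N+1}x_k$, the strong Law of Large Numbers for the i.i.d.\ coordinates under $\mu_\gamma$ gives $m(x)=\gamma$ for $\mu_\gamma$-almost every $x$, and since prepending a symbol does not affect a Ces\`aro limit, $m(ax)=\gamma$ a.e.\ as well. Consequently $f(ax)=a\gamma$ a.e., which yields $\mathbb{L}_{\beta f}\mathds{1} = e^{-\beta\gamma}+e^{\beta\gamma}=2\cosh(\beta\gamma)$ a.e.\ and hence $\|\mathbb{L}_{\beta f}\|_{\mathrm{op}}=2\cosh(\beta\gamma)$. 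The second ingredient is the change-of-variables identity for the prepending maps $T_a\colon x\mapsto ax$; exploiting the product structure of $\mu_\gamma$ (independence of the coordinates) one obtains $\int_X \mathds{1}_B(ax)\, d\mu_\gamma(x)=\mu_\gamma(B\cap[a])/\mu_\gamma([a])$. Combining these gives the closed form
\begin{equation*}
\int_X \mathbb{L}_{\beta f}\mathds{1}_B\, d\mu_\gamma
=\frac{e^{-\beta\gamma}}{\mu_\gamma([-1])}\,\mu_\gamma(B\cap[-1])
+\frac{e^{\beta\gamma}}{\mu_\gamma([+1])}\,\mu_\gamma(B\cap[+1]).
\end{equation*}

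With this formula the equivalence in item (1) becomes a finite computation. For the reverse implication I would test the eigen-equation on the two cylinders $B=[+1]$ and $B=[-1]$, which collapses it to the pair of scalar equations $e^{\pm\beta\gamma}=2\cosh(\beta\gamma)\,\mu_\gamma([\pm1])$; inserting the Bernoulli values $\mu_\gamma([\pm1])=(1\pm\gamma)/2$ and subtracting the two relations yields exactly $\gamma=\tanh(\beta\gamma)$ (while adding them is the identity $1=1$). For the forward implication I would instead assume $\gamma=\tanh(\beta\gamma)$, deduce $\mu_\gamma([\pm1])=e^{\pm\beta\gamma}/(2\cosh(\beta\gamma))$, and substitute back into the closed form above: each coefficient $e^{\pm\beta\gamma}/\mu_\gamma([\pm1])$ collapses to $2\cosh(\beta\gamma)$, so the right-hand side becomes $2\cosh(\beta\gamma)\big(\mu_\gamma(B\cap[-1])+\mu_\gamma(B\cap[+1])\big)=2\cosh(\beta\gamma)\,\mu_\gamma(B)$ for every $B$, which is exactly the generalized conformal condition. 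Item (2) is then immediate: for any solution $\gamma$, item (1) makes $\mu_\gamma$ a generalized conformal measure, i.e.\ $\mathbb{L}_{\beta f}^*\mathds{1}=2\cosh(\beta\gamma)\mathds{1}$, so $\mathds{1}$ is an eigenvector of $\mathbb{L}_{\beta f}^*$ with eigenvalue $2\cosh(\beta\gamma)$.

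The main obstacle, and the only place genuine care is needed, is the a.e.\ evaluation of the discontinuous potential: because $f$ is not continuous, none of the pointwise manipulations available in the earlier sections apply, and every step must be justified $\mu_\gamma$-almost everywhere inside $L^1(\mu_\gamma)$. The crucial point is that the $\limsup$ in the definition of $f$ coincides $\mu_\gamma$-a.e.\ with an honest limit equal to $\gamma$ by the Law of Large Numbers, and that this a.e.\ value is stable under the prepending maps $T_a$. Once this and the product-measure change-of-variables identity are in place, the remainder is the routine algebra sketched above.
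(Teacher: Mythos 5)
Your proposal is correct and follows essentially the same route as the paper's own argument: evaluating $m(ax)=\gamma$ $\mu_\gamma$-a.e.\ via the Law of Large Numbers, obtaining $\|\mathbb{L}_{\beta f}\|_{\mathrm{op}}=2\cosh(\beta\gamma)$, reducing the dual eigen-equation to indicators, deriving the same closed form for $\int_X \mathbb{L}_{\beta f}\mathds{1}_B\,d\mu_\gamma$, and testing on the cylinders $[\pm 1]$ to force $\gamma=\tanh(\beta\gamma)$ before substituting back for sufficiency. If anything, your write-up is slightly more careful than the paper's in separating the two directions of the equivalence and in flagging the a.e.\ handling of the discontinuous potential.
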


By Proposition \ref{prop-auto-medidas-CW} if $0<\beta<1$, then $\mu_{0}$
the symetric Bernoulli measure, with parameter $p=1/2$
is a generalized conformal measure associated to the eigenvalue $2$.
But on the other hand, if $\beta>1$, this measure still
is an eigenmeasure associated to the eigenvalue $2$,
but now there is two other Bernoulli measures $\mu_{\pm\gamma(\beta)}$
associate to a strictly bigger eigenvalue $2\cosh(\beta \gamma(\beta))$.

Now let us move the discussion to the eigenfunctions. We first observe that
for any fixed $\beta>0$, the operator $\mathbb{L}_{\beta f}:L^1(\mu_{0})\to L^1(\mu_{0})$, 
has the constant function as an eigenfunction
associated to the eigenvalue $\lambda=2$, that is,
$\mathbb{L}_{\beta f}\mathds{1}=2\mathds{1}$.

However, for $\beta>1$, which is above the critical point of the original Curie-Weiss model,
we can see more interesting phenomena, such as multidimensional eigenspaces.
Since $\beta$ is fixed, in what follows we will write
$\mu_\pm \equiv  \mu_{\pm\gamma(\beta)}$ to lighten the notation.

Consider the operator $\mathbb{L}_{\beta f}:L^1(\nu)\to L^1(\nu)$,
where $\nu\equiv \nu(t)\equiv t\mu_{+}+(t-1)\mu_{-}$
is a nontrivial convex combination of $\mu_{\pm}$.
The measurable sets
$B_+ = \{x\in X : m(x) = +\gamma(\beta)\}$
and $B_- = \{x\in X : m(x) = -\gamma(\beta)\}$ are chosen  in such way
they form a measurable partition of the space $X=B_+\cup B_-\cup N$ up
to a $\nu$-null set $N$. 
Note that $\mu_+(B_{+})=1$ and $\mu_-(B_{+})=0$, and so they behave similarly as the sets
constructed in
\ref{lema-conditional-coformal-reciprocal}.
Proceeding as before it is simple to argue that $\mathds{1}_{B_{\pm}}$ are two linearly independent
eigenfunctions for the adjoint of the the operator $\mathbb{L}_{\beta f}$, that is,
$\mathbb{L}_{\beta f}^*\mathds{1}_{B_{+}}=2\cosh(\beta \gamma(\beta))\mathds{1}_{B_{+}}$
and $\mathbb{L}_{\beta f}^*\mathds{1}_{B_{-}}=2\cosh(\beta \gamma(\beta))\mathds{1}_{B_{-}}$.

Regarding the operator
$\mathbb{L}_{\beta f}$ itself, it turns out that  the characteristic functions $\mathds{1}_{B_{\pm}}$
are also eigenfunctions, more precisely,
$\mathbb{L}_{\beta f}\mathds{1}_{B_+}=2\cosh(\beta\gamma(\beta))\mathds{1}_{B_+}$
and $\mathbb{L}_{\beta
f}\mathds{1}_{B_-}=2\cosh(\beta\gamma(\beta))\mathds{1}_{B_-}$.
To see this, remember that for any point $x\in B_+$,
$m(x)=\gamma(\beta)$, and so
\[
\mathbb{L}_{\beta f}\mathds{1}_{B_{+}}(x)
= \sum_{a \in \{-1,1\}} \mathds{1}_{B_{+}}(x)\exp (\beta a m(ax))
= (2\cosh {\beta \gamma}) \mathds{1}_{B_{+}}(x).
\]
The same is true for $B_{-}$, since $m(B_{-})=-\gamma(\beta)$.
Moreover, with a proper rewording, the proof of Theorem \ref{th:eigenspace-dimension} can
be adapted to this discontinuous case showing
that these are the only linear independent eigenfunctions
of $\mathbb{L}_{\beta f}$ (with the measure $\nu$).

We summarize this discussion with the following theorem.
\begin{theorem}\label{th:cw-spectre}
Let $\beta>0$, $f$ be the potential given by \eqref{def-transf-ope-CW-sections},
$\mu_{\gamma}$ the Bernoulli measure given by
\eqref{def-bernoulli-measure} and $\gamma(\beta),-\gamma(\beta)$ solutions of the
equation $\gamma = \tanh(\beta \gamma)$, then the following hold:
\begin{enumerate}
\item for $0<\beta \leqslant 1$:
      \begin{enumerate}
      \item the operator $\mathbb{L}_{\beta f}:L^1(\mu_0)\to L^1(\mu_0)$ has norm $\lVert
            \mathbb{L}_{\beta f} \rVert = 2$ and the symmetric
            Bernoulli measure $\mu_0$ is a generalized conformal measure,
            associated to ${\beta f}$, in the sense of Definition \ref{automedida}.

      \item the eigenspace of $\mathbb{L}_{\beta f}$, associated to eigenvalue $2$, 
      has\! dimension\! one and is spanned by $\mathds{1}$;
      \end{enumerate}
\item for $\beta > 1$:
      \begin{enumerate}
      \item  the operator $\mathbb{L}_{\beta f}:L^1(\nu)\to L^1(\nu)$, where
            $\nu=t\mu_{\gamma(\beta)}+(t-1)\mu_{-\gamma(\beta)}$ and $t \in (0,1)$, has operator norm
            $\lVert \mathbb{L}_{\beta f} \rVert = 2\cosh (\beta\gamma(\beta)) > 2$
            and $\nu$ is a generalized conformal measure of
            associated to $\beta f$, in the sense of Definition \ref{automedida}.

      \item for any non-trivial convex combination
            $\nu=t\mu_{\gamma(\beta)}+(t-1)\mu_{-\gamma(\beta)}$,
            the eigenspace of $\mathbb{L}_{\beta f}$ is
            two-dimensional and is spanned by $\{\mathds{1}_{B^+}, \mathds{1}_{B^-}\}$,
            where $B_{\pm} = \{x\in X : m(x) = \pm\gamma(\beta)\}$ and $m$ is given by \eqref{def-mag}.
      \end{enumerate}
\end{enumerate}
\end{theorem}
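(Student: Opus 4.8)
The plan is to assemble the explicit computations from the discussion preceding the statement and to feed them into discontinuous-potential adaptations of Proposition~\ref{prop-auto-medidas-CW}, Theorem~\ref{teo-unica} and Theorem~\ref{th:eigenspace-dimension}; the only genuinely new analytic input is the qualitative study of the self-consistency equation $\gamma=\tanh(\beta\gamma)$. First I would settle the fixed-point count. Writing $g(\gamma)=\tanh(\beta\gamma)$, one has $g(0)=0$, $g$ odd, $g'(0)=\beta$, and $g$ strictly concave on $(0,\infty)$. Hence for $0<\beta\leqslant 1$ the slope at the origin is at most one and concavity forces $g(\gamma)<\gamma$ for all $\gamma>0$, so $\gamma=0$ is the unique solution; for $\beta>1$ one has $g(\gamma)>\gamma$ for small $\gamma>0$ while $g(\gamma)\to 1<\gamma$ as $\gamma\to\infty$, so the intermediate value theorem produces a unique $\gamma(\beta)\in(0,1)$ with $g(\gamma(\beta))=\gamma(\beta)$, and by oddness the solutions are exactly $-\gamma(\beta),0,\gamma(\beta)$.

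For $0<\beta\leqslant 1$, the uniqueness of the solution makes $\mu_0$ the only generalized conformal measure by Proposition~\ref{prop-auto-medidas-CW}, with eigenvalue and operator norm $2\cosh(0)=2$, which gives $(1)(a)$. For $(1)(b)$ I would use $m(ax)=m(x)$ to get $\mathbb{L}_{\beta f}\mathds{1}(x)=2\cosh(\beta m(x))$, which equals $2$ for $\mu_0$-a.e.\ $x$ since $m=0$ $\mu_0$-a.e.\ by the law of large numbers; thus $\mathds{1}$ is harmonic for the normalized operator. As $\mu_0$ is the unique generalized conformal measure it is trivially an extreme point, so the adapted Theorem~\ref{teo-unica} caps the maximal eigenspace at dimension one, and the explicit eigenfunction $\mathds{1}$ makes this sharp.

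For $\beta>1$, the measures $\mu_\pm\equiv\mu_{\pm\gamma(\beta)}$ are generalized conformal, and since $\cosh$ is even both realize the same eigenvalue $2\cosh(\beta\gamma(\beta))>2$, strictly above the eigenvalue $2$ attached to $\mu_0$; hence this common value is the operator norm on $L^1(\nu)$ for $\nu=t\mu_++(1-t)\mu_-$, which is itself generalized conformal because both $\mu_\pm$ satisfy the defining relation with it, giving $(2)(a)$. For $(2)(b)$ the sets $B_\pm=\{m=\pm\gamma(\beta)\}$ play the role of the invariant partition from the adaptation of Lemma~\ref{lema-conditional-coformal-reciprocal}, with $\mu_+(B_+)=1$ and $\mu_-(B_+^c)=1$; using $m(ax)=m(x)$ and $\mathds{1}_{B_\pm}(ax)=\mathds{1}_{B_\pm}(x)$ one computes $\mathbb{L}_{\beta f}\mathds{1}_{B_\pm}=2\cosh(\beta\gamma(\beta))\mathds{1}_{B_\pm}$, so $\mathds{1}_{B_+}$ and $\mathds{1}_{B_-}$ are eigenfunctions at the operator norm and are independent by disjointness of supports. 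Since $\mu_+,\mu_-$ are the only extreme generalized conformal measures realizing the top eigenvalue, the adapted Theorem~\ref{th:eigenspace-dimension} bounds the dimension by two, so it is exactly two with basis $\{\mathds{1}_{B_+},\mathds{1}_{B_-}\}$.

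The hard part is not any single computation but transferring the abstract machinery of Section~\ref{sec:markov} to this discontinuous potential: Theorems~\ref{teo-positivo}, \ref{teo-unica} and \ref{th:eigenspace-dimension} were proved for continuous $f$ via Theorem~\ref{Teo-Extension-L1} and the density of $C(X)$ in $L^1(\nu)$, whereas here one must work from hypothesis~\ref{hip1} and Definition~\ref{def-gen-conf-measure}. The saving feature is that those proofs use only positivity of $\mathbb{L}_{\beta f}$, the $L^1$-contraction property, and the duality $\langle\mathds{1},\mathbb{L}_{\beta f}\varphi\rangle_\nu=\lVert\mathbb{L}_{\beta f}\rVert\langle\mathds{1},\varphi\rangle_\nu$, all of which are precisely what Definition~\ref{def-gen-conf-measure} supplies, so I expect the arguments to carry over after rewording. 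I would also verify that $\mu_+,\mu_-$, being distinct product and hence ergodic (thus extreme) measures, are the sole extreme generalized conformal measures at the top eigenvalue, which is what makes the cardinality bound in the adapted Theorem~\ref{th:eigenspace-dimension} sharp at two.
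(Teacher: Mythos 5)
Your proposal is correct and follows essentially the same route as the paper: the fixed-point analysis of $\gamma=\tanh(\beta\gamma)$, the identification of the generalized conformal measures $\mu_0$ and $\mu_{\pm\gamma(\beta)}$ via Proposition~\ref{prop-auto-medidas-CW}, the explicit eigenfunctions $\mathds{1}$ and $\mathds{1}_{B_\pm}$ computed from $m(ax)=m(x)$, and the appeal to discontinuous-potential adaptations of Theorems~\ref{teo-unica} and~\ref{th:eigenspace-dimension} for the dimension upper bounds. Your added observations (the concavity argument for the root count, and the remark that the abstract proofs use only positivity, the $L^1$-contraction, and the duality supplied by Definition~\ref{def-gen-conf-measure}) are consistent with, and slightly more explicit than, the paper's own treatment.
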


This gives us an example of a (discontinuous) potential
for which the eigenspace associated to its Ruelle operator
on $L^1(\nu)$ has dimension bigger than one. 
Note that for continuous potentials similar behavior could happen 
only if, we have phase transition,
in the sense of multiple extreme points in $\mathscr{G}^*$.
In some sense, this is the case in our example if we interpret $\mu_+$ and
$\mu_-$ as our extreme points.
The analogy with the results developed in Section \ref{sec:markov} 
can be extended a bit further, since $\mu_+$
and $\mu_-$ are fully supported and there are
disjoint full-measure the sets
$B_{+}$ and $B_{-}$ for each one of them,
as in Lemma \ref{lema-conditional-coformal-reciprocal}.

In what follows, we give another example where the dimension 
of the SHF for the extended operator associate with a continuous
potential has dimension strictly bigger than one.

\subsection{Multiple $g$-measures and the Space of Harmonic Functions}\label{sec-g-measures} 

In the context of one-sided shifts 
a positive and continuous function $g:X\to\mathbb{R}$ is called a $g$-function \cite{MR310193,MR412389}
if the potential $f=\log g$ is a normalized potential, i.e., $\mathscr{L}\mathds{1}=\mathds{1}$. 
In this case, the elements of $\mathscr{G}^{*}$ are called $g$-measures. 

There are several examples of $g$-functions for which the set of associated $g$-measures is not 
a singleton see, for example, the references \cite{bhs2017,MR1244665,MR3350377,MR0435352,MR0436850,FF70,MR3928619}.
In what follows, for the sake of concreteness, the reader can fix any of these examples.  

Let $g:X\to\mathbb{R}$ be a $g$-function such that the set of conformal measures $\mathscr{G}^{*}$
associated to $f=\log g$ satisfies $\mathrm{ex}(\mathscr{G}^{*})=\{\nu_1,\ldots,\nu_{n}\}$. 
Take $\nu$ as the barycenter of $\mathscr{G}^{*}$. It follows from the definition of a $g$-function that  $\mathbb{L}_{\nu_{j}}\mathds{1}=\mathds{1}$, for any $j=1,\ldots,n$.
And so the condition \textit{iii)} in the proof of 
Theorem \ref{th:eigenspace-dimension} is satisfied. Therefore, by arguing as in the theorem's proof, 
we can ensure the existence of 
measurable sets $B_1,\ldots, B_n$ such that $\nu_{i}(B_{j})=\delta_{K}(i,j)$ and $B_i\cap B_j = \emptyset$
whenever $i\neq j$. Moreover, $\{\mathds{1}_{B_1},\ldots, \mathds{1}_{B_n}\}$ is a basis for 
the SHF for $\mathbb{L}_{\nu}$, thus showing that the upper bound obtained in Theorem \ref{th:eigenspace-dimension} is saturated.

\subsection{On the existence of Harmonic Functions}\label{sec-suff-cond-eigenfunctions}

In this section we provide a set of sufficient conditions for the existence of a harmonic function
associated to $\rho(\mathscr{L})$ for the transfer operator or some of its extensions.
We begin with an abstract criterion  related to the equality of the operator norm and the spectral radius.
In the sequel we assume that $p:\mathscr{B}(E)\to [0,1]$ is a full support a priori probability measure. We
use the following notations for the operator norm
\[
\|\mathscr{L}\|_{\mathrm{op}}
\equiv
\sup_{\|\varphi\|_{\infty}=1} \|\mathscr{L}\varphi \|_{\infty}
\quad \text{and}\quad
\|\mathbb{L}\|_{\mathrm{op}}
\equiv
\sup_{\|\varphi\|_{L^1(\nu)}=1} \|\mathbb{L}\varphi \|_{L^1(\nu)}.
\]
Analogously, the spectral radius of $\mathbb{L}$ acting on $L^1(\nu)$
is denoted by $\rho(\mathbb{L})$.

\begin{proposition}\label{prop-opnorm-raio}
Let $f$ be a continuous potential and suppose that $\rho(\mathscr{L}) = \|\mathscr{L}\|_{\mathrm{op}}$
then the constant function $h\equiv 1$ is a harmonic function of $\mathscr{L}$.
\end{proposition}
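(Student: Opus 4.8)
The plan is to show that the hypothesis forces $\mathscr{L}\mathds{1}=\rho(\mathscr{L})\mathds{1}$, which is precisely the assertion that the constant function $h\equiv 1$ is harmonic, i.e.\ an eigenfunction of $\mathscr{L}$ associated with $\rho(\mathscr{L})$. The first step is purely order-theoretic: since $\mathscr{L}$ is a positive operator on $C(X)$, I would record that $\|\mathscr{L}\|_{\mathrm{op}}=\|\mathscr{L}\mathds{1}\|_{\infty}$. Indeed, for $\|\varphi\|_{\infty}\leqslant 1$ one has $-\mathds{1}\leqslant\varphi\leqslant\mathds{1}$, so applying $\mathscr{L}$ and using positivity gives $|\mathscr{L}\varphi|\leqslant\mathscr{L}\mathds{1}$ pointwise, whence $\|\mathscr{L}\varphi\|_{\infty}\leqslant\|\mathscr{L}\mathds{1}\|_{\infty}$; the bound is attained at $\varphi=\mathds{1}$. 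Writing $\lambda\equiv\rho(\mathscr{L})$ and invoking the hypothesis $\rho(\mathscr{L})=\|\mathscr{L}\|_{\mathrm{op}}$, this yields $\|\mathscr{L}\mathds{1}\|_{\infty}=\lambda$, and since $\mathscr{L}\mathds{1}\geqslant 0$ we obtain the two-sided bound $0\leqslant\mathscr{L}\mathds{1}\leqslant\lambda\mathds{1}$.

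Next I would test this inequality against a conformal measure. By the discussion in Subsection \ref{sec:preliminaries} the set $\mathscr{G}^{*}$ is nonempty, so I fix $\nu\in\mathscr{G}^{*}$, that is $\mathscr{L}^{*}\nu=\lambda\nu$. Taking $\varphi=\mathds{1}$ in the defining eigen-relation gives $\int_{X}\mathscr{L}\mathds{1}\,d\nu=\lambda\int_{X}\mathds{1}\,d\nu=\lambda$. Combining this with the pointwise bound $\mathscr{L}\mathds{1}\leqslant\lambda$ forces
\[
\int_{X}\bigl(\lambda-\mathscr{L}\mathds{1}\bigr)\,d\nu=\lambda-\lambda=0,
\]
while the integrand $\lambda-\mathscr{L}\mathds{1}$ is nonnegative. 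Hence $\mathscr{L}\mathds{1}=\lambda$ holds $\nu$-almost everywhere.

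Finally I would promote this almost-everywhere identity to an identity everywhere. Here the standing assumption that $p$ is fully supported becomes essential: by Theorem \ref{Teo-EP-fully-supp} every $\nu\in\mathscr{G}^{*}$ then satisfies $\mathrm{supp}(\nu)=X$. Since $\mathscr{L}\mathds{1}$ is continuous and agrees with the constant $\lambda$ on a set of full $\nu$-measure, which is therefore dense in $X$, continuity forces $\mathscr{L}\mathds{1}\equiv\lambda$ on all of $X$. This is exactly $\mathscr{L}\mathds{1}=\rho(\mathscr{L})\mathds{1}$, so $h\equiv 1$ is a harmonic function, as claimed. I expect the only genuinely non-trivial ingredient to be this last upgrade from $\nu$-a.e.\ to everywhere, which rests squarely on the full-support property of conformal measures from Theorem \ref{Teo-EP-fully-supp}; the remaining steps are elementary consequences of the positivity of $\mathscr{L}$ and the eigen-relation defining $\mathscr{G}^{*}$. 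As a remark, one could avoid conformal measures altogether: submultiplicativity of $n\mapsto\|\mathscr{L}^{n}\mathds{1}\|_{\infty}$ together with $\rho=\|\mathscr{L}\|_{\mathrm{op}}$ forces $\|\mathscr{L}^{n}\mathds{1}\|_{\infty}=\lambda^{n}$ for all $n$, and evaluating $\mathscr{L}^{2}\mathds{1}$ at a maximizing point recovers $\mathscr{L}\mathds{1}=\lambda$ on the relevant preimages; but the measure-theoretic route is shorter and reuses the earlier results directly.
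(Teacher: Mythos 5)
Your proof is correct and follows essentially the same route as the paper's: identify $\|\mathscr{L}\|_{\mathrm{op}}=\|\mathscr{L}\mathds{1}\|_{\infty}$ via positivity, integrate the nonnegative continuous function $\rho(\mathscr{L})-\mathscr{L}\mathds{1}$ against a conformal measure to get zero, and conclude. You are in fact slightly more careful than the paper at the final step, where the upgrade from $\nu$-a.e.\ equality to equality everywhere genuinely requires the full support of $\nu$ (Theorem \ref{Teo-EP-fully-supp}), a point the paper's own proof leaves implicit.
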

\begin{figure}[h]
\centering
\includegraphics[width=6.0cm, height=3.9cm]{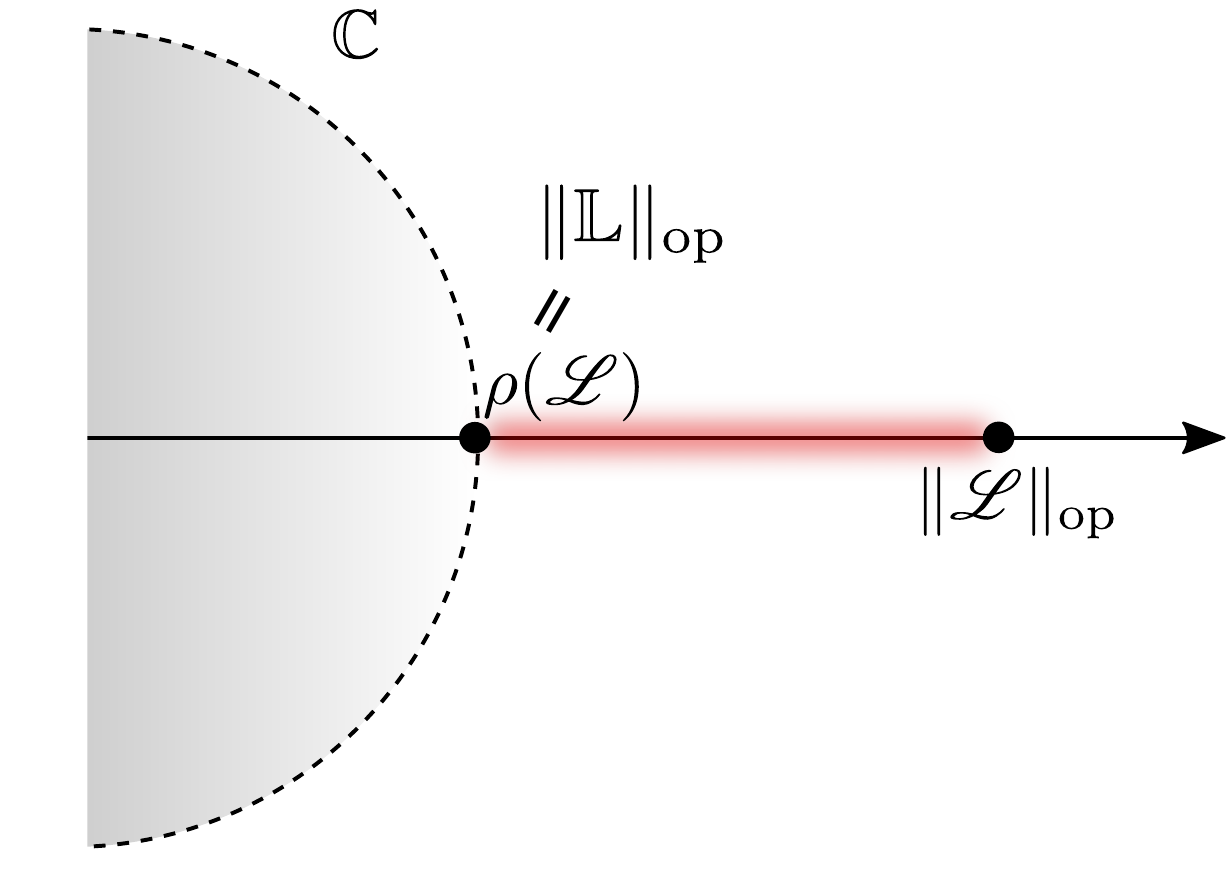}
\label{fig-prop-raio-norma}
\caption{In most cases, $\rho(\mathscr{L}) < \|\mathscr{L}\|_{\mathrm{op}}$ }
\end{figure}

\begin{proof}
From the positivity of $\mathscr{L}$
follows that $\|\mathscr{L}\|_{\mathrm{op}} = \|\mathscr{L}1\|_{\infty}$.
The map  $x\longmapsto	\|\mathscr{L}1\|_{\infty}-\mathscr{L}(1)(x)$
is continuous and takes only non-negative values. By integrating this function
and using the hypothesis and  above identity, we get
$
0
\leq
\int_{X} \|\mathscr{L}1\|_{\infty}-\mathscr{L}(1) \, d\nu
=
\|\mathscr{L}1\|_{\infty} - \rho(\mathscr{L})
=0.
$
Since integrand is continuous we conclude that $\|\mathscr{L}1\|_{\infty}=\mathscr{L}(1)(x)$
thus proving the proposition.
\end{proof}

Of course, there are several other sufficient conditions based on the regularity properties of
the potential $f$. One of the weakest regularity properties
is the famous bounded distortion condition.

Next, we present alternative necessary and
sufficient conditions that can be applied for
continuous  potentials that may not satisfy theses classical regularities properties.
They are derived from some
results in \cite{foguel} but for our setting.

From the discussions of Section \ref{sec:markov} it should be clear that the
problem of the existence of a harmonic function can be analyzed on each of the
supporting sets $B_k$ with $\nu_k(B_k)=1$ and
$B_j\cap B_k = \emptyset$. Therefore as in the proof of
Theorem \ref{th:eigenspace-dimension},
we will restrict ourselves, without loss of generality,
to extensions associated to extreme elements of $\mathscr{G}^*$.

\bigskip 

In the following,
we discuss Theorem E of \cite[p.45]{foguel} in the setting of the present paper.
This is an outstanding result in the theory of Markov processes
and roughly speaking it says that there exists  a measurable partition of the
space $X = A_0 \cup A_1$, called Hopf decomposition, for which
there is at least 
one harmonic function which is positive
on $A_1$, but there is no harmonic function
having a positive part in a subset of $A_0$ ($\nu$-a.e).

Recall that, whenever $\nu \in \textrm{ex}(\mathscr{G}^*)$,
Lemma \ref{lema-eigenfunctions-positivas} guarantees
that any non-negative harmonic function is
actually positive $\nu$-a.e..
In terms of the theory of Markov processes
this is the same as saying that  Hopf's decomposition is trivial in the sense that $X=A_1$.
On the other hand, when no harmonic function exists,
$X=A_0$, and so the decomposition is always trivial.

Note that these are precisely the hypotheses of
Corollaries 1 and 2 in \cite[p.45--46]{foguel},
which, translated back to our notation, read as follows

\begin{lemma}
Let $f$ be a continuous potential and $\nu\in\mathscr{G}^{*}$ an
extreme conformal measure. Then the following are equivalent:
\begin{enumerate}
\item there exists a unique, up to multiplication by a constant, $0 \not
      \equiv u \in L^1(\nu)$ such that $\mathbb{L}u = u$;
\item
      if $0 \leqslant v \in L^\infty(\nu)$, $v\not\equiv 0$, then $\liminf_n \langle v,
      \mathbb{L}^n1 \rangle_\nu > 0$;
\item
      if $0 \leqslant v \in L^\infty(\nu)$, $v\not\equiv 0$, then
      \begin{equation*}
      \liminf_N \frac{1}{N}  \sum_{n=0}^{N-1} \langle v, \mathbb{L}^n1 \rangle_\nu > 0;
      \end{equation*}
\item
      There is no set $A$ of positive $\nu$-measure for which
      \begin{equation*}
      \lim_N \frac{1}{N} \sum_{n=0}^{N-1} (\mathbb{L}^*)^n\mathds{1}_A = 0 \quad \mathrm{(}
      \nu-\mathrm{a.e.)};
      \end{equation*}
\item
      There is no set $A$ of positive measure for which
      \begin{equation*}
      \lim_N \frac{1}{N} \sum_{n=0}^{N-1} (\mathbb{L}^*)^n\mathds{1}_A = 0 \text{ uniformly ($\nu$-a.e.)}.
      \end{equation*}

\end{enumerate}
\end{lemma}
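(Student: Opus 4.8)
The plan is to obtain the entire chain of equivalences as a direct transcription of Foguel's analysis of the Hopf decomposition, taking advantage of the fact that for an extreme conformal measure this decomposition collapses. Recall from the discussion preceding the statement that $\mathbb{L}:L^1(\nu)\to L^1(\nu)$ is a Markov process in Hopf's sense, so Theorem E of \cite{foguel} provides a measurable splitting $X=A_0\cup A_1$ on which some harmonic function is strictly positive on $A_1$, while no harmonic function has a positive part inside $A_0$. The first step is to record that, since $\nu\in\mathrm{ex}(\mathscr{G}^*)$, this decomposition is trivial: by Lemma \ref{lema-eigenfunctions-positivas} every harmonic function has a definite sign, and by Theorem \ref{teo-positivo} any non-negative one is positive $\nu$-a.e.; hence if a harmonic function exists at all we must have $\nu(A_1)=1$, and if none exists then $\nu(A_0)=1$. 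In either case the five statements reduce to the single dichotomy of whether $\nu(A_1)=1$ or $\nu(A_0)=1$.

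With the decomposition trivialized, the remaining work is to identify each item with this dichotomy. Item (1) holds exactly when a non-zero invariant function exists, that is, when $\nu(A_1)=1$; the uniqueness clause in (1) is then supplied by Theorem \ref{teo-unica}, since extremality forces the maximal eigenspace to be at most one-dimensional. The equivalences $(1)\Leftrightarrow(2)\Leftrightarrow(3)$ and $(1)\Leftrightarrow(4)\Leftrightarrow(5)$ are then precisely Corollaries 1 and 2 of \cite[p.45--46]{foguel}: they characterize the conservative (ergodic) regime through the non-vanishing of $\liminf_n\langle v,\mathbb{L}^n\mathds{1}\rangle_\nu$ and of its Ces\`aro mean, and dually through the impossibility of having $\frac1N\sum_{n=0}^{N-1}(\mathbb{L}^*)^n\mathds{1}_A\to 0$ on a set $A$ of positive measure. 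I would carry out the translation of Foguel's abstract Markov operator and its dual, of his conservative part, and of the associated duality pairing into the transfer-operator notation used here, checking that the $\sigma$-finiteness hypotheses are satisfied---here $\nu$ is a probability measure---and that Foguel's invariant function $\mathbb{L}u=u$ is exactly our notion of harmonic function.

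The principal obstacle I anticipate is not conceptual but one of faithful bookkeeping: transcribing Foguel's corollaries, stated for an abstract conservative Markov operator, into the present framework, and confirming that the ergodicity encoded in extremality (no nontrivial invariant set, by Lemma \ref{lema-extreme-01-law}) furnishes exactly the irreducibility those corollaries require. In particular I would need to justify that the $\nu$-a.e.\ and the uniform versions of the Ces\`aro condition in (4) and (5) coincide under our hypotheses; this is where extremality does real work, as it excludes the intermediate possibility in which $A$ is only partially absorbed, rather than forcing me to re-prove the underlying ergodic theorems, which I would import directly from \cite{foguel}.
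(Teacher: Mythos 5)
Your proposal is correct and follows essentially the same route as the paper: the paper likewise trivializes the Hopf decomposition of Theorem E in \cite{foguel} by invoking extremality of $\nu$ (via Lemma \ref{lema-eigenfunctions-positivas} and Theorem \ref{teo-positivo}) and then reads the five equivalences directly off Corollaries 1 and 2 of \cite[p.45--46]{foguel}. Your additional remark that the uniqueness clause in item (1) comes from Theorem \ref{teo-unica} is a small but welcome clarification that the paper leaves implicit.
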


\subsection{Phase Transitions in Lattice Spin Systems}\label{sec-phase-transition}

Let $E$ be a standard Borel space, $X=E^{\mathbb{N}}$ and 
$\mathscr{B}(X)$ the product $\sigma$-algebra on $X$.
The space $X$ is regarded as a metric space  with the distance
$d_{X}(x,y)=\sum_{n=1}^{\infty} 2^{-n}\min\{d_{E}(x_n,y_n),1\}$.
For each $i\in \mathbb{N}$, let $\pi_{i}:X\to E$ be the standard projections 
as defined in Subsection \ref{sec-app-EqSt}. 
For each  $\Lambda \Subset \mathbb{N} $ (finite subset) we consider the following sub-sigma-algebras 
$\mathscr{B}_{\Lambda} \equiv \sigma(\pi_{j}: j\in \Lambda )$ and
$\mathscr{T}_{\Lambda} \equiv \sigma\big(\cup_{\Gamma} \mathscr{B}_{\Gamma} : \Gamma\Subset \Lambda^c  \big)$.

A probability kernel $\gamma_{\Lambda}$ is called a proper probability kernel from $\mathscr{T}_{\Lambda}$ to $\mathscr{B}_{\Lambda}$ if 
\begin{itemize}
	\item[\textit{i)}]  $\gamma_{\Lambda}(\cdot|x)$ is probability measure on $(X,\mathscr{B}(X))$ for any $x\in X$;
	\item[\textit{ii)}]  $\gamma_{\Lambda}(A|\cdot)$ is $\mathscr{T}_{\Lambda}$-measurable for any $A\in\mathscr{B}(X)$
	\item[\textit{iii)}]  $\gamma_{\Lambda}(A\cap B|x)=1_{B}(x)\gamma_{\Lambda}(A|x)$ 
	for any $A\in\mathscr{B}(X)$, $B\in \mathscr{T}_{\Lambda}$ and $x\in X$.
\end{itemize}
The family $\gamma\equiv (\gamma_{\Lambda})_{\Lambda\Subset \mathbb{N}}$ is 
	said to be consistent if 
	\[
	\int_{X} \gamma_{\Lambda}(A|x)\, d\gamma_{\Gamma}(\cdot|x) = \gamma_{\Gamma}(A,x),
	\ \text{whenever}\ \emptyset\subsetneq \Lambda\subset \Gamma.
	\] 

A \textit{specification} with parameter  set $\mathbb{N}$ and state space $E$ is a family 
$\gamma\equiv (\gamma_{\Lambda})_{\Lambda\Subset \mathbb{N}}$  such that 
$\gamma_{\Lambda}$ is a proper probability kernel from $\mathscr{T}_{\Lambda}$ to $\mathscr{B}_{\Lambda}$
and $(\gamma_{\Lambda})_{\Lambda \Subset \mathbb{N} }$ 
satisfies the consistency condition.

Let  $\gamma$ be a specification with parameter set $\mathbb{N}$ and state space $E$.
The set of all Borel probability measures defined by 
\[
	\mathscr{G}^{\text{DLR}}(\gamma)
	=
	\left\{
	\mu\in \mathscr{M}_{1}(X): \mu(A|\mathscr{T}_{\Lambda})(x) = \gamma_{\Lambda}(A,x) \ \ \mu-a.s.
	\right\}
\] 
will be called the set of DLR-Gibbs Measures determined by $\gamma$.
In this context of Statistical Mechanics, if $\#\mathscr{G}^{\textrm{DLR}}(\gamma)>1$, then we say that we have phase transition.
	
\begin{theorem}[Georgii, \cite{MR2807681}]\label{teo-cofinal}
Let $\gamma = (\gamma_{\Lambda})_{\Lambda \Subset \mathbb{N} }$ be a specification with parameter set $\mathbb{N}$ and state space $E$.
	Then the following statements are equivalent:
	\begin{enumerate}
		\item $\mu\in \mathscr{G}^{\text{DLR}}(\gamma)$;
		\item $\mu(A) = \int_{X} \gamma_{\Lambda}(A,x)\, d\mu(x) \equiv  \mu\gamma_{\Lambda}(A)$,\quad  for all $A\in\mathscr{F}$ and $\Lambda \Subset \mathbb{N}$;
		\item  There is a cofinal collection $\{\Gamma_{\alpha}: \Gamma_{\alpha}\Subset \mathbb{N}, \forall \alpha\in I \}$
		(i.e., directed by inclusion and, for all $\Lambda \Subset \mathbb{N}$, there is an index $\alpha \in I$ such that 
		$\Lambda \Subset \Gamma_{\alpha}$ ) satisfying $\mu(A) = \mu\gamma_{\Lambda}(A)$, for all $A\in \mathscr{F}$.
	\end{enumerate}
\end{theorem}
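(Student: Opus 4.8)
The plan is to establish the equivalences through the chain $(1)\Leftrightarrow(2)$ and $(2)\Leftrightarrow(3)$, treating $(1)\Leftrightarrow(2)$ as the measure-theoretic heart and $(2)\Leftrightarrow(3)$ as consistency-and-cofinality bookkeeping. Throughout I write $\mu\gamma_{\Lambda}(A)=\int_X \gamma_{\Lambda}(A|x)\,d\mu(x)$, and the first thing I would record is the Fubini-type identity $\int_X \phi\,d(\mu\gamma_{\Lambda})=\int_X\big(\int_X \phi(y)\,\gamma_{\Lambda}(dy|x)\big)\,d\mu(x)$ valid for every bounded $\mathscr{B}(X)$-measurable $\phi$, which I would verify starting from indicators by a monotone-class argument. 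This identity, together with the properness condition \textit{iii)}, is what drives every implication.

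For $(1)\Rightarrow(2)$ I would integrate the defining identity $\mu(A|\mathscr{T}_{\Lambda})=\gamma_{\Lambda}(A,\cdot)$ against $\mu$ and use the tower property $\int_X \mu(A|\mathscr{T}_{\Lambda})\,d\mu=\mu(A)$. The reverse $(2)\Rightarrow(1)$ is the more delicate direction, and it is precisely where properness enters. Fix $A\in\mathscr{B}(X)$ and $\Lambda\Subset\mathbb{N}$. By condition \textit{ii)} the function $\gamma_{\Lambda}(A,\cdot)$ is $\mathscr{T}_{\Lambda}$-measurable, so it remains to check the defining property of conditional expectation, namely $\int_B \gamma_{\Lambda}(A,x)\,d\mu(x)=\mu(A\cap B)$ for every $B\in\mathscr{T}_{\Lambda}$. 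Here the properness relation $\gamma_{\Lambda}(A\cap B|x)=\mathds{1}_{B}(x)\gamma_{\Lambda}(A|x)$ lets me write $\int_B \gamma_{\Lambda}(A,x)\,d\mu(x)=\int_X \gamma_{\Lambda}(A\cap B|x)\,d\mu(x)=\mu\gamma_{\Lambda}(A\cap B)$, and then $(2)$ applied to the set $A\cap B$ gives exactly $\mu(A\cap B)$. Since this holds for all $B\in\mathscr{T}_{\Lambda}$, uniqueness of the conditional expectation identifies $\gamma_{\Lambda}(A,\cdot)$ with $\mu(A|\mathscr{T}_{\Lambda})$ $\mu$-a.s.; as the exceptional null set is allowed to depend on $A$, no regular-version subtlety arises, and $(1)$ follows.

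For $(2)\Leftrightarrow(3)$, the implication $(2)\Rightarrow(3)$ is immediate: $(2)$ asserts $\mu\gamma_{\Lambda}=\mu$ for \emph{every} $\Lambda\Subset\mathbb{N}$, so the collection of all finite subsets is a cofinal family with the required property. For $(3)\Rightarrow(2)$ I would combine cofinality with consistency. Given an arbitrary $\Lambda\Subset\mathbb{N}$, cofinality furnishes $\alpha$ with $\Lambda\Subset\Gamma_{\alpha}$ and $\mu\gamma_{\Gamma_{\alpha}}=\mu$. Substituting $\mu=\mu\gamma_{\Gamma_{\alpha}}$ and applying the Fubini identity with $\phi=\gamma_{\Lambda}(A|\cdot)$ yields
\begin{equation*}
\mu\gamma_{\Lambda}(A)=\int_X \gamma_{\Lambda}(A|x)\,d(\mu\gamma_{\Gamma_{\alpha}})(x)=\int_X (\gamma_{\Gamma_{\alpha}}\gamma_{\Lambda})(A|x)\,d\mu(x),
\end{equation*}
where $(\gamma_{\Gamma_{\alpha}}\gamma_{\Lambda})(A|x)=\int_X \gamma_{\Lambda}(A|y)\,\gamma_{\Gamma_{\alpha}}(dy|x)$. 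The consistency relation $\gamma_{\Gamma_{\alpha}}\gamma_{\Lambda}=\gamma_{\Gamma_{\alpha}}$ (valid because $\Lambda\subset\Gamma_{\alpha}$) then collapses the right-hand side to $\int_X \gamma_{\Gamma_{\alpha}}(A|x)\,d\mu(x)=\mu\gamma_{\Gamma_{\alpha}}(A)=\mu(A)$; as $A$ and $\Lambda$ were arbitrary, this is $(2)$.

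The main obstacle I anticipate is not any single implication but the rigorous handling of the Fubini-type identity for the non-indicator integrand $\phi=\gamma_{\Lambda}(A|\cdot)$, which underlies both $(2)\Rightarrow(1)$ and $(3)\Rightarrow(2)$. Establishing it in the generality of a standard Borel state space $E$ requires that the kernels be genuinely measurable in the conditioning variable and an invocation of the monotone-class theorem, rather than a naive product-measure Fubini. Once that identity and the properness condition \textit{iii)} are in place, each of the four implications reduces to the short computation sketched above.
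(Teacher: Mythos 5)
The paper itself offers no proof of this statement: it is quoted as Theorem \ref{teo-cofinal} directly from Georgii's monograph \cite{MR2807681} (it is essentially Remark 1.24 there), so there is no in-paper argument to compare against. Your proof is correct and is precisely the standard one: $(1)\Rightarrow(2)$ by the tower property, $(2)\Rightarrow(1)$ by using properness \textit{iii)} to convert $\int_{B}\gamma_{\Lambda}(A|x)\,d\mu(x)$ into $\mu\gamma_{\Lambda}(A\cap B)=\mu(A\cap B)$ for $B\in\mathscr{T}_{\Lambda}$, $(2)\Rightarrow(3)$ trivially, and $(3)\Rightarrow(2)$ by inserting $\mu=\mu\gamma_{\Gamma_{\alpha}}$ for a cofinal $\Gamma_{\alpha}\supset\Lambda$ and collapsing via the consistency relation $\gamma_{\Gamma_{\alpha}}\gamma_{\Lambda}=\gamma_{\Gamma_{\alpha}}$. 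You are also right that the only technical point deserving care is the Fubini-type identity for the composed kernel, which follows from the usual monotone-class extension starting from indicators; with that in place every implication is the short computation you give.
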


\begin{example} Let $E=\{0,1\}$ be a finite alphabet, $f\in C(X)$ 
a continuous potential and $\mathscr{L}:C(X)\to C(X)$ the transfer operator defined by
\[\mathscr{L}(\varphi)(x)
\equiv
\sum_{y\in\sigma^{-1}(x)} \exp(f(y))\varphi(y)
\] 
For each $n\in\mathbb{N}$, consider the volume $\Lambda_n = \{1,2,\ldots,n\}$ and the kernel 
\begin{equation}\label{eq-specification-example}
	\gamma_{\Lambda_n}(A|x)\equiv \displaystyle
	\frac{\mathscr{L}^{n}(1_{A})(\sigma^{n} x)}{\mathscr{L}^{n}(1)(\sigma^{n}x) }.
\end{equation}
In \cite{CLS20} the authors show that the family of kernels $(\gamma_{\Lambda_n})_{n\in\mathbb{N}}$ 
can be naturally extended to a specification $\gamma = (\gamma_{\Lambda})_{\Lambda \Subset \mathbb{N}}$.
By Theorem \ref{teo-cofinal} we have that $\mathscr{G}^{\mathrm{DLR}}(\gamma)$ 
does not depend on the choice of this extension, 
since the collection $(\Lambda_n)_{n\in\mathbb{N}}$ is a cofinal collection. 
Moreover, for any continuous potential $f$, the equality  
$\mathscr{G}^{\mathrm{DLR}}(\gamma)= \mathscr{G}^{*}$ holds.

Therefore, if for some $\nu\in \mathscr{G}^{*}$
the SHF for $\mathbb{L}: L^1(\nu)\to L^1(\nu)$ has dimension bigger than one, then 
as a consequence of Theorem \ref{teo-unica}, $\nu$ can not be an extreme measure in $\mathscr{G}^*$.
By Krein-Milman Theorem it follows that $\#\mathscr{G}^{*}>1$ and so  
$\#\mathscr{G}^{\mathrm{DLR}}(\gamma)>1$, that is, 
we have a first-order phase transition in the sense of Dobrushin.

In practice, we will begin with a continuous potential $f$ and them construct the specification 
$(\gamma_{\Lambda})_{\Lambda \Subset \mathbb{N} }$, where
$\gamma_{\Lambda_n}(A|x)$ is given by expression \eqref{eq-specification-example}.
Next, we fix an arbitrary point $x\in X$. Usually, this point is chosen within the set 
where the potential attains its maximum value (recall that we already applied this strategy in Subsection
\ref{sec-Curie-Weiss} since the plus and minus condition satisfies this property). 
Since $X$ is compact 
we can ensure that the sequence of probability measures 
$(\gamma_{\Lambda_n}(\cdot|x))_{n\in\mathbb{N}}$
has at least one cluster point $\nu$ which is, as mentioned before, an element 
in $\mathscr{G}^{*}$. With our conformal measure in hands,  
we construct our extension $\mathbb{L}:L^1(\nu)\to L^1(\nu)$. 
Once we have this extension we look at its Perron-Frobenius eigenvector space.
Finally, if this space has dimension bigger than one we can use the theory developed here
to ensure that the system has phase transition. 
In most practical cases, to bound from below the cardinality of $\mathscr{G}^{*}$ or 
the dimension of the subspace $\mathscr{H}\equiv \{h\in L^1(\nu): \mathbb{L}h=h \}$ 
will be extremely hard problems. 
When the alphabet $E$ is finite, the problem of lower bounding 
the cardinality of $\mathscr{G}^{*}$ is expected to be simpler than 
the one of lower bounding the dimension of $\{h\in L^1(\nu): \mathbb{L}h=h \}$,
because it can be reduced to the entropy-energy balance problem. However, as 
far as uncountable alphabets are concerned, we think that 
both methods can complement each other. 
For example, in some cases such as 
product type potentials in infinite alphabets
the second problem can be solved, see \cite{JLMS:JLMS12031}. 
They probably are toy models that could be used to expand the applications
of our theory.   

The sufficient condition for phase transition, presented in this section, 
motivates the following interesting question.
Fix a continuous potential $f$, and take $\nu$ as the barycenter
of $\mathscr{G}^{*}$. 
Let $\mathscr{H}$ denote the SHF 
for $\mathbb{L}:L^1(\nu)\to L^1(\nu)$. 

Roughly speaking, in most cases where $\mathrm{dim}\, \mathscr{H}=1$,
we normally do not have phase transition. On the other hand, if 
$\mathrm{dim}\, \mathscr{H}>1$ we do have phase transition. This naturally 
leads us to ask what happens when we do not have harmonic functions,
that is, is there any interesting physical phenomenon occurring when 
$\mathrm{dim}\, \mathscr{H}=0$? 
At this moment, we do not know the answer, but we would like to remark that 
this property is verified for some models used to study the intermittence phenomenon, 
such as the Manneville-Pomeau model. 



		
\end{example}

\subsection{Functional Central Limit Theorem}
\label{FCLT}
In this section we study the validity of a Functional Central Limit Theorem 
in our setting. We use the version obtained here
to prove a new result in Statistical Mechanics.

It is well-known that there are several techniques to
reduce the problem of proving a FCLT for a Markov process to
the problem of verifing some analytical condition on the
associated transfer operator. See for instance \cite{BataCLT,Gordin,MR1862393,Wood}.
A classical one is to find  a solution of Poisson's equation,
which is the approach we will follow here. 

Let us recall the definitions of the transfer operator and Poisson's equation.
Consider a homogeneous Markov chain $(Z_{n})_{n\in\mathbb{N}}$
with state space $S$ and transition probability $p(\cdot,\cdot)$.
Although a similar notation is adopted, we shall remark that the transition probability has no relation with
the a priori measure considered in this paper.
The transfer operator $P$ induced by the transition probability $p$
is defined as follows: given a non-negative measurable real-valued function $\varphi$,
the action of $P$ on $\varphi$ is a non-negative measurable function $P\varphi$ given by
\begin{equation} \label{transferoperator}
P\varphi(x)=\int \varphi(y)\,p(x,dy).
\end{equation}
For a measurable function $\varphi$ not necessarily non-negative,
we write $\varphi=\varphi^{+}-\varphi^{-}$ as a difference of non-negative
ones and define
$P\varphi(x)=P\varphi^{+}(x)-P\varphi^{-}(x)$,
if $P\varphi^{+}(x)$ and $P\varphi^{-}(x)$ are both finite.
A probability measure $\mu$ on $S$ is said to be  a {\it stationary measure} if
\[
\mu(A)=\int p(x,A)\, d\mu(x)
\]
for every Borel set $A\subset S$.

In \cite[p.1340]{BhLe} a simple condition on the transfer
operator $P$ guaranteeing the validity of a FCLT is formulated.
Namely, assume that $(Y_{n})_{n\in\mathbb{N}}$ is an ergodic stationary Markov
chain whose stationary distribution is $\mu$.
Note that $P$ takes $L^{2}(\mu)$ into $L^{2}(\mu)$.
Given an almost surely non-constant observable
$\phi\colon S\to \mathbb{R}$ such that  $\phi\in L^{2}(\mu)$ and $\int_{X} \phi\, d\mu=0$,
consider \emph{Poisson's equation}
\[
(I-P)\upsilon=\phi.
\]
Assume that there is a solution $\upsilon\in L^{2}(\mu)$ and  let
$\varrho\equiv \mu(\upsilon^2)-\mu(P\upsilon^2)>0$ (ergodicity implies $\varrho>0$).
Consider the stochastic process $Y_{n}(t)$ given by
\[
Y_{n}(t)=\displaystyle\frac{1}{\varrho\sqrt{n}}\sum_{j=0}^{[nt]}\phi(Z_{j}), \qquad 0\leqslant t<\infty
\]
taking values in the space \( D[0,\infty) \) of real-valued right continuous
functions on \( [0,\infty) \) having left limits, endowed with the Skorohod topology.
Then, the process $ Y_{n}(t)$ converges in distribution
(weak-$*$ convergence) to the Wiener measure on \( D[0,\infty) \).

The FCLT stated above is proved by reducing the problem to the martingale case.  See  \cite{BataCLT,Gordin} for this reduction 
and Billingsley \cite[Theorem 18.3]{MR1700749} for a FCLT for martingale differences.


In our case this theory is applied as follows.
Let $f$ be a normalized potential, i.e., $\mathscr{L}\mathds{1}=\mathds{1}$.
For each measurable set $A$, define the transition
probabilities by
\[
p(x,A)=\mathbb{L}(1_A)(x),
\]
where the operator $\mathbb{L}$ is the extension constructed  in Proposition \ref{prop-extensao-l1}.
One can easily show that the above expression defines a transition
probability  kernel of some Markov chain $(Z_n)_{n\in\mathbb{N}}$ taking values in $X$.
A straightforward computation shows that the induced transfer operator satisfies 
$P\varphi (x)=\mathbb{L}(\varphi)(x)$. Therefore, any probability measure fixed by
the dual operator  $\mathscr{L}^*$ is a stationary measure for $P$.

In the sequel, as usual, we denote by $E_{\mu}$ the expectation with respect to the joint law of the  Markov
Chain $(Z_n)_{n\in\mathbb{N}}$ with stationary measure
$\mu$.

The distributional relation between the Markov chain $(Z_n)_{n\in\mathbb{N}}$
and the underlying dynamics of the operator $\mathbb{L}$ is given
by the following lemma, whose proof can be found in reference \cite[p.85]{MR1862393}.
\begin{lemma}\label{lema-eq-distribution}
Let $(Z_n)_{n\in\mathbb{N}}$ be the Markov chain defined above,
$n\geqslant 1$, $g:X^{n}\to \mathbb{R}$ a positive measurable function, 
and $\mu$ a stationary measure.
Then we have
\[
\int_X g(x,\sigma(x), \ldots, \sigma^{n-1}(x))\,d\mu(x)=E_\mu[g(Z_n, Z_{n-1},\ldots,Z_1 )].
\]
\end{lemma}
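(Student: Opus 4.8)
The plan is to establish the identity first for test functions of product form and then upgrade it to arbitrary positive measurable $g$ by a monotone class argument. The two structural facts driving the computation are the \emph{intertwining relation}
\[
\mathscr{L}\big((\varphi\circ\sigma)\,\psi\big)=\varphi\cdot\mathscr{L}\psi,
\]
which is immediate from \eqref{def-Lf} because $\sigma(ax)=x$ for every $a\in E$, and the eigenmeasure identity $\int_X\mathscr{L}\psi\,d\mu=\int_X\psi\,d\mu$, which for a stationary measure $\mu$ is exactly \eqref{eigencondition} together with $\mathscr{L}^*\mu=\mu$ (equivalently, $P=\mathbb{L}$ fixes $\mu$). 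Recall also that $\mathbb{L}$ and $\mathscr{L}$ agree on $C(X)$. I would fix bounded continuous functions $\varphi_1,\dots,\varphi_n$ and take $g(y_1,\dots,y_n)=\prod_{k=1}^n\varphi_k(y_k)$; since finite products of bounded continuous functions form a multiplicative system generating the product $\sigma$-algebra, proving the lemma on such $g$ suffices.

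For the right-hand side I would expand the expectation against the tower of transition kernels, using $Z_1\sim\mu$ and $P=\mathbb{L}$. Integrating from the innermost variable outward, $\varphi_1(Z_n)$ is absorbed into $\mathbb{L}\varphi_1$, then $\varphi_2\cdot\mathbb{L}\varphi_1$ into $\mathbb{L}[\varphi_2\,\mathbb{L}\varphi_1]$, and so on, yielding
\[
E_\mu\big[g(Z_n,\dots,Z_1)\big]=\int_X \varphi_n\cdot\mathbb{L}\big[\varphi_{n-1}\,\mathbb{L}[\cdots\mathbb{L}[\varphi_2\,\mathbb{L}\varphi_1]\cdots]\big]\,d\mu .
\]
The index reversal $Z_n,\dots,Z_1$ in the statement is precisely what makes $\varphi_1$ the innermost factor here.

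For the left-hand side I would alternate the eigenmeasure identity and the intertwining relation. Starting from $\int_X\varphi_1\,(\varphi_2\circ\sigma)\cdots(\varphi_n\circ\sigma^{n-1})\,d\mu$, I insert one copy of $\mathscr{L}$ using $\int_X\Psi\,d\mu=\int_X\mathscr{L}\Psi\,d\mu$, observe that the product of the last $n-1$ factors equals $\Phi\circ\sigma$ with $\Phi=\varphi_2\,(\varphi_3\circ\sigma)\cdots(\varphi_n\circ\sigma^{n-2})$, and apply the intertwining relation to pull $\Phi$ out while leaving $\mathscr{L}\varphi_1$ behind. Iterating $n-1$ times peels off $\varphi_1,\varphi_2,\dots$ one at a time and reproduces exactly the nested expression above; each intermediate function is continuous, so the eigenmeasure identity is legitimate at every step. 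This gives equality of the two sides on product test functions.

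Finally I would pass to general $g\geqslant 0$: the collection of bounded measurable $g$ for which the identity holds is a vector space, stable under bounded monotone limits, and contains the multiplicative system of continuous products, so by the functional monotone class theorem it contains all bounded measurable functions, and monotone convergence then covers all positive measurable $g$. I expect the only genuine difficulty to be the bookkeeping that matches the two nested expressions with the correct index ordering; the intertwining identity and the index reversal conspire to make them coincide, while the measure-theoretic extension is routine.
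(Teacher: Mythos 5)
Your argument is correct. Note, however, that the paper does not actually prove this lemma: it simply cites Hennion--Herv\'e \cite[p.85]{MR1862393}, so there is no in-text proof to compare against; your duality computation is the standard one and all the pieces check out. The intertwining relation $\mathscr{L}((\varphi\circ\sigma)\psi)=\varphi\,\mathscr{L}\psi$ is indeed immediate from $\sigma(ax)=x$, the nested expressions produced by peeling the left-hand side and by integrating the Markov tower from the inside out do coincide (the index reversal in $g(Z_n,\ldots,Z_1)$ is exactly what aligns them), every intermediate function stays continuous so the identity $\int_X\mathscr{L}\Psi\,d\mu=\int_X\Psi\,d\mu$ is legitimately applied at each step, and the monotone-class extension is routine since products of bounded continuous functions form a multiplicative system generating $\mathscr{B}(X^n)=\mathscr{B}(X)^{\otimes n}$ for the compact metric space $X$. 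It is worth pointing out a shortcut that makes the whole computation unnecessary: the kernel $p(x,\cdot)=\mathbb{L}(\mathds{1}_{(\cdot)})(x)$ is carried by the fiber $\sigma^{-1}(x)$, so $\sigma(Z_{k+1})=Z_k$ almost surely, whence $(Z_n,Z_{n-1},\ldots,Z_1)=(Z_n,\sigma(Z_n),\ldots,\sigma^{n-1}(Z_n))$ a.s.; since stationarity gives $Z_n\sim\mu$, the claimed identity follows at once for every positive measurable $g$ with no test-function or monotone-class step at all. Your longer route buys nothing extra here, but it is sound.
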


Here we focus on functions $g$ of the form $g =\mathds{1}_{A}\circ h$, where
$A=(-\infty,t]$ is some suitable interval on the real line, $h:X^n\to \mathbb{R}$ is given by
$h(z_1,\ldots, z_n) = \phi(z_1)+\ldots +\phi(z_n)$, with $\phi:X\to\mathbb{R}$ being a positive
function in some Banach space, for example, the space of  H\"older continuous functions.

In these particular cases, by taking $A=(-\infty,t\sqrt{n}]$, and $h$ as above, we get from
Lemma \ref{lema-eq-distribution} the following identity
\begin{align*}
\mu(\{\textstyle\sum_{j=0}^{n-1}\phi\circ\sigma^{j}(x)\leqslant t\sqrt{n} \})
 & =
\int_X \mathds{1}_{(-\infty,t\sqrt{n}]}\circ h(x,\sigma(x), \ldots, \sigma^{n-1}(x))\,d\mu(x)
\\[0.3cm]
 & =E_\mu[\mathds{1}_{(-\infty,t\sqrt{n}]}\circ h(Z_n, Z_{n-1},\ldots,Z_1 )]
\\[0.3cm]
 & =
P_{\mu}(\{ \textstyle\sum_{j=1}^n\phi(Z_j)\leqslant t\sqrt{n}\}).
\end{align*}

This relation implies, for example, that if random variables
$(\phi(Z_n))_{n\in\mathbb{N}}$ are distributed according to
$P_{\mu}$ and obey a FCLT, then
the random variables  $(\phi\circ\sigma^n)_{n\in\mathbb{N}}$
distributed according to $\mu$ also obey a FCLT. 
Here the stationary measure $\mu$ is a fixed point of $\mathscr{L}^{*}$. 
Since we are assuming $\mathscr{L}\mathds{1}=\mathds{1}$, in many cases
such  measure is an \textit{equilibrium state} associated to the potential $f$,
see \cite{MR3897924} for more details on equilibrium states on uncountable alphabet settings.

The previous observations gain in relevance  when we further particularize the above setting
by taking $E=[a,b]$ (bounded and closed interval on the real line) and $\phi:X\to \mathbb{R}$
as being the projection on the first coordinate. In this case the sequence $(\phi\circ\sigma^n)_{n\geqslant
0}$
can be regarded as  the standard coordinate process 
on $(X,\mathscr{B}(X),\mu)$. This coordinate processes
exhibits what we call a Gibbsian dependence. Such dependence, in general,
is stronger than a Markovian dependence, meaning that the stochastic process defined by
$(\phi\circ\sigma^n)_{n\geqslant 0}$ may have an infinite memory,
in the usual sense of conditioning, as in Markov chains.  On the other hand,
Lemma \ref{lema-eq-distribution} says that, by considering a bigger space, this coordinate process
exhibiting a Gibbsian dependence can actually be described, in law, by a suitable Markov process.

\begin{theorem}\label{functionalclt}
Let $P$ be the transfer operator induced by the extension $\mathbb{L}$
associated with a continuous
and normalized potential and $\mu\in \mathscr{G}^{*}$.
Let $\phi:X\to \mathbb{R}$ be a non-constant observable  in $L^{2}(\mu)$
satisfying $\mu(\phi)=0$. If
there exists a solution $\upsilon\in L^{2}(\mu)$ for Poisson's equation $(I-\mathbb{L})\upsilon=\phi$,
then the stochastic process $Y_{n}(t)$, given by
\begin{align}\label{eq-Ynt}
Y_{n}(t)=\displaystyle\frac{1}{\varrho\sqrt{n}}\sum_{j=0}^{[nt]}\phi\circ \sigma^{j}, \qquad 0\leqslant t<\infty,
\end{align}
where $\varrho=\mu(\upsilon^2)-\mu(P\upsilon^2)$, converges in distribution to the Wiener measure in
$D[0,\infty)$.
\end{theorem}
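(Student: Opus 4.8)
The plan is to reduce the statement to the abstract functional central limit theorem for the Markov chain $(Z_n)_{n\in\mathbb{N}}$ quoted from \cite{BhLe}, and then transport the conclusion from the chain to the dynamical partial sums $\sum_{j}\phi\circ\sigma^{j}$ by means of the distributional identity in Lemma \ref{lema-eq-distribution}. First I would verify that all hypotheses of that abstract theorem hold for $(Z_n)$. Since $f$ is normalized we have $\mathbb{L}\mathds{1}=\mathds{1}$, so $p(x,A)=\mathbb{L}(\mathds{1}_A)(x)$ is a genuine transition kernel whose induced transfer operator is $P=\mathbb{L}$; because $\mu\in\mathscr{G}^{*}$ satisfies $\mathscr{L}^{*}\mu=\mu$, the measure $\mu$ is stationary for $P$. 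The observable $\phi$ is non-constant, lies in $L^{2}(\mu)$ and has $\mu(\phi)=0$, and by hypothesis Poisson's equation $(I-\mathbb{L})\upsilon=\phi$ has a solution $\upsilon\in L^{2}(\mu)$, whose associated asymptotic variance is exactly $\varrho$. The one remaining ingredient is ergodicity of the stationary chain, which both guarantees $\varrho>0$ and supplies the deterministic conditional-variance limit needed for the martingale reduction; this is automatic whenever $\mu$ is an extreme point of $\mathscr{G}^{*}$, and I would record it as the standing case, the general $\mu$ being handled through the invariant-set decomposition used in the proof of Theorem \ref{th:eigenspace-dimension}.

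With the hypotheses in place, the abstract theorem yields that the chain process $\widetilde{Y}_n(t)=\frac{1}{\varrho\sqrt{n}}\sum_{j=0}^{[nt]}\phi(Z_j)$ converges in distribution to Wiener measure on $D[0,\infty)$. This is the Gordin-type martingale approximation of \cite{Gordin,BataCLT}, applied to the martingale differences $M_j=\upsilon(Z_j)-(\mathbb{L}\upsilon)(Z_{j-1})$ produced by Poisson's equation, combined with the martingale functional central limit theorem of Billingsley \cite[Theorem 18.3]{MR1700749}.

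The core of the argument is the transfer step. Fix a horizon $T>0$ and times $0\leqslant t_1<\cdots<t_k\leqslant T$, and set $N=[nT]+1$. Applying Lemma \ref{lema-eq-distribution} with $n$ replaced by $N$ to a bounded measurable $g:X^{N}\to\mathbb{R}^{k}$ that reads off the partial sums $\sum_{j=0}^{[nt_i]}\phi$ shows that the finite-dimensional distributions of $Y_n$ under $\mu$ coincide with those of the time-reversed chain process $t\mapsto \frac{1}{\varrho\sqrt{n}}\bigl(\widetilde{S}_N-\widetilde{S}_{N-[nt]-1}\bigr)$, where $\widetilde{S}_m=\sum_{j=1}^{m}\phi(Z_j)$. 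By the chain theorem of the previous paragraph, $\frac{1}{\varrho\sqrt{n}}\widetilde{S}_{[ns]}\Rightarrow W(s)$, hence $\frac{1}{\varrho\sqrt{n}}\bigl(\widetilde{S}_{[nT]}-\widetilde{S}_{[n(T-t)]}\bigr)\Rightarrow W(T)-W(T-t)$ on $[0,T]$. Since the reversed Brownian motion $t\mapsto W(T)-W(T-t)$ is again a standard Brownian motion on $[0,T]$, the transported finite-dimensional distributions converge to those of Wiener measure; as $T$ is arbitrary, this identifies every finite-dimensional limit of $Y_n$ with Wiener measure.

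Finally I would upgrade finite-dimensional convergence to convergence in $D[0,\infty)$ by establishing tightness of $(Y_n)$. On each $D[0,T]$ the law of $Y_n$ is the image of the already tight law of $\widetilde{Y}_n$ under the path-reversal map; because the limit is almost surely continuous, Skorohod convergence there is equivalent to uniform convergence on compacts, so the reversal entails no loss of tightness and it passes through to $Y_n$. Combining tightness with the identified finite-dimensional limits gives $Y_n\Rightarrow W$ in $D[0,\infty)$, as claimed. I expect the main obstacle to be precisely this interplay between the time-reversal built into Lemma \ref{lema-eq-distribution} and the Skorohod topology: one must exploit the continuity of the limit so that the non-continuity of path reversal on $D[0,T]$ becomes irrelevant, while keeping the ergodicity/extremality reduction explicit so that $\varrho>0$ and the martingale functional central limit theorem are legitimately available.
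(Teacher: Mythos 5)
Your proposal follows essentially the same route as the paper: the theorem is obtained there by invoking the abstract FCLT of \cite{BhLe} (Gordin-type martingale approximation via Poisson's equation plus Billingsley's martingale FCLT) for the Markov chain $(Z_n)$ with kernel $p(x,A)=\mathbb{L}(\mathds{1}_A)(x)$ and stationary measure $\mu$, and then transferring the conclusion to the Birkhoff sums through the distributional identity of Lemma \ref{lema-eq-distribution}. You are in fact more explicit than the paper about two points it leaves implicit --- the time-reversal hidden in Lemma \ref{lema-eq-distribution} when passing from one-dimensional marginals to finite-dimensional distributions and tightness, and the ergodicity (extremality of $\mu$) needed both for $\varrho>0$ and for the hypotheses of the quoted chain theorem --- which is a welcome tightening rather than a different argument.
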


\begin{example}[Spectral Gap]\label{ex-spe-gap}
In this example, we show how to apply the above theorem to a transfer operator whose  action on the space of
H\"older continuous functions have the spectral gap property. This is very well-known when $E$ is a finite set and
have been proved by several different methods. Here the aim is to present a similar result in the setting of compact
metric alphabets.

Let $X=E^{\mathbb{N}}$, where $E$ is a compact metric space,
and $f:X\to\mathbb{R}$ be an $\alpha$-H\"older potential, that is,
$f$ is a potential satisfying
\[
\mathrm{Hol}_{\alpha}(f) \equiv \sup_{x\neq y} \frac{|f(x)-f(y)|}{d(x,y)^{\alpha}}<\infty.
\]
Up to summing a coboundary term, see \cite{MR4026981,MR3377291},
we can assume that $f$ is a normalized potential, meaning that $\mathscr{L}\mathds{1}=\mathds{1}$.
In this setting, the authors of \cite{MR3377291} showed that $\mathscr{G}^{*}=\{\mu\}$ is a singleton, and its
unique measure is $\sigma$-invariant. In \cite{MR4026981} the authors showed that the transfer operator acts on
the space of $\alpha$-H\"older functions with a spectral gap.

Let  $\phi$ be an arbitrary $\alpha$-H\"older observable for which $\mu(\phi)=0$.
We will show that in this case, we can always get a solution for Poisson's equation  $(I-P)\upsilon=\phi$.
Indeed, following the notation of reference \cite{MR4026981}, and taking $\psi\equiv 1$ and $\varphi=\phi$
in Theorem 3.1 we get the existence of two constants $0<s<1$ and $C>0$ such that
\[
\|\mathbb{L}^n \phi\|_\infty\leqslant C s^n,
\]
and as a consequence $\|\mathbb{L}^n \phi\|_2 \leqslant C s^n$, which  implies that
$\upsilon=-\sum_{n=0}^{\infty}\mathbb{L}^n\phi$
is a well-defined element of $L^2(\mu)$ and also a solution for Poisson's equation.
Therefore the stochastic process $Y_n(t)$ as defined in \eqref{eq-Ynt}
converges in distribution to the Wiener measure in $D[0,\infty)$.
\end{example}

The aim of the next example is to show the validity of the FCLT
for a large class of observables
in a situation where we do not have the spectral gap property,
by using Poisson's equation.
The argument is much more involved and it is based on the previous theorem
together with a series of recent results \cite{MR4059795,MR3538412}
about the maximal spectral data of the Ruelle operator.

\begin{example}[Absence of Spectral Gap]\label{ex-no-spe-gap}
In this example we consider a Dyson type model for ferromagnetism on the one-sided lattice.
Before presenting this model, we need to introduce some notation.

We start by remembering that a \emph{modulus of continuity} is a
continuous, increasing and concave function
$\omega:[0,\infty)\to [0,\infty)$ such that $\omega(0)=0$.
We say that $f:X\to \mathbb{R}$ is
$\omega$-H\"older continuous if there is a constant $C>0$ such that
\[
|f(x)-f(y)|\leqslant C \omega(d(x,y))
\]
for any $x,y\in X$. The smallest constant $C$ satisfying the above inequality
will be denoted by $\text{Hol}_\omega(f)$.
Denote by $C^\omega(X)$ the space of all such functions. If we set
$\|f\|_\omega=\|f\|_\infty+\text{Hol}_\omega(f)$, it
is a straightforward calculation to see that
$(C^\omega(X), \|\cdot \|_\omega)$ is a Banach algebra.

Note that, for $\alpha,\beta\geqslant 0$, the function
$\omega_{\alpha+\beta \log}(r) \equiv r^\alpha\log(r_0/r)^{-\beta}$
defines a modulus of continuity. In this particular case,
we denote the space of $\omega_{\alpha+\beta\log}$-H\"older continuous functions by
$C^{\alpha+\beta \log}(X)$. In particular, if $\alpha=0$,
which is the case we are interested here, we simply write $C^{\beta \log}(X)$.

Let $X=\{-1,1\}^{\mathbb{N}}$, endowed with the metric $d(x,y) = 2^{-N(x,y)}$, where the number
$N(x,y) \equiv \inf\{i\in\mathbb{N} : x_j=y_j, 1\leqslant j\leqslant i-1\  \text{and} \ x_i\neq y_i  \}$  .
The \emph{Dyson} potential on the one-sided lattice is given by the following expression
\[
f(x)=\sum_{n=2}^\infty \dfrac{x_1x_n}{n^{2+\varepsilon}}.
\]
One can easily show that $f$ is not a H\"older continuous function with respect to $d(x,y)$.
In addition, the transfer operator $\mathscr{L}_{f}$, associated with a Dyson
potential $f$,
does not leave the space of H\"older continuous functions invariant.
But, on the other hand, it leaves invariant a bigger subspace of $C(X)$, called
the Walters space.
Although there exists $\bar{f}$ in the Walters space cohomologous to $f$, neither
$\mathscr{L}_{f}$ nor $\mathscr{L}_{\bar{f}}$ acts with the spectral gap property on this
subspace, see \cite{MR3538412}.
Therefore, the techniques employed in the previous example can not be used here to
obtain a FCLT for the unique \textit{equilibrium state}
$\mu_{f}\in \mathscr{G}^{*}(\bar{f})$.
Actually, for this potential, it is not clear how to find a proper subspace of $C(X)$ where
Hennion and Hervé Theory (Theorems A, B and C \cite[p.12]{MR1862393})
could be applied.

The aim of this example is to prove that the stochastic process
\[
Y_{n}(t) = \displaystyle\frac{1}{\varrho\sqrt{n}}\sum_{j=0}^{[nt]}\phi\circ \sigma^{j}, \qquad 0\leqslant t<\infty,
\]
where $\varrho=\mu_{f}(\upsilon^2)-\mu_{f}(P\upsilon^2)$,
converges in distribution to the Wiener measure in
$D[0,\infty)$ for any observable $\phi\in C^{\epsilon\log}$, and $\epsilon>2$.

\medskip
Firstly, we show  that $f\in C^{\varepsilon \log}(X)$. Indeed,
\begin{align*}
|f(x)-f(y)| & \leqslant  \sum_{n=N(x,y)+1}^{\infty} \frac{2}{n^{2+\varepsilon}}
\\
            & \leqslant\frac{2}{(N(x,y)+1)^{2+\varepsilon}}\left( 1+ \int_{1}^{\infty}\left(\frac{N(x,y)+1}{N(x,y)+t+1}
\right)^{2+\varepsilon} \,dt \right)
\\[0.2cm]
            & =\frac{2}{(N(x,y)+1)^{2+\varepsilon}}\left( 1+
\frac{(N(x,y)+1)^{2+\varepsilon}}{1+\varepsilon}\frac{1}{(N(x,y)+2)^{1+\varepsilon})} \right)
\\[0.1cm]
            & \leqslant \frac{2}{N(x,y)^{2+\varepsilon}}\left( N(x,y)+ 2^3\frac{N(x,y)}{1+\varepsilon} \right)
\\[0.1cm]
            & \leqslant \frac{2\times 10N(x,y)}{N(x,y)^{1+\varepsilon}}=\frac{20}{N(x,y)^{\varepsilon}}
\leqslant\frac{20}{[\log(2^{N(x,y)})]^{\varepsilon}}
\\[0.1cm]
            & =20 \log(2^{N(x,y)})^{-\varepsilon}= \log(r_02^{N(x,y)})^{-\varepsilon}=\omega(2^{-N(x,y)})
\\[0.1cm]
            & =\omega\circ d(x,y),
\end{align*}
where $\omega(r)=\log(r_0/r)^{-\varepsilon}$.

The previous estimate holds for any $\varepsilon>0$, but  to solve Poisson's equation later, we will
need to restrict ourselves to $\varepsilon>2$.
Observe that, from the definition of $d$, we have $d(a_1\cdots a_j x, a_1\cdots a_j y)=2^{-j}d(x,y)$.
By using this identity and the previous inequality, we conclude, for $0\leqslant j\leqslant n-1$, that
\[
|f(\sigma^j(a_1\cdots a_nx)) - f(\sigma^j(a_1\cdots a_ny))|
\leqslant \omega (2^{n-j}d(x,y)).
\]
Therefore,
$
|\sum_{j=0}^{n-1}f(\sigma^j(a_1\cdots a_nx))-f(\sigma^j(a_1\cdots a_ny))|\leqslant
\sum_{j=0}^{n-1}\omega (2^{-j}d(x,y)).
$
We recall that the summands on the rhs can be written as
\[
\omega(2^{-j}d(x,y))
=\Big[\log\big(\dfrac{r_0}{2^{-j}d(x,y)}\big)\Big]^{-\varepsilon}
=\Big[ j\log 2+\underbrace{\log\left(\dfrac{r_0}{d(x,y)}\right)}_{\equiv A}\Big]^{-\varepsilon}.
\]
Thus,
\begin{align*}
\sum_{j=0}^{n-1}\omega (2^{-j}d(x,y))
 & \leqslant \sum_{j=0}^{\infty}\omega
(2^{-j}d(x,y))=\sum_{j=0}^{\infty}\dfrac{1}{\left[j\log 2+A\right]^\varepsilon}
\\[0.3cm]
 & \leqslant  \lim_{\delta\to 0}\displaystyle\int_{\delta}^\infty\dfrac{dx}{[(\log 2)\cdot	x+A]^\varepsilon}
=\lim_{\delta\to 0}\dfrac{\log 2}{\varepsilon+1}\dfrac{1}{[(\log 2)\cdot  x+A]^{\varepsilon
-1}}\Big|_{\delta}^{\infty}
\\[0.3cm]
 & =
\underbrace{\dfrac{\log 2}{\varepsilon+1}}_{\equiv C}\dfrac{1}{\left[\log\left(\dfrac{r_0}{d(x,y)}\right)\right]^{\varepsilon-1} }.
\end{align*}
Showing that
\[
\sum_{j=0}^{n-1}\omega (2^{-j}d(x,y))\leqslant C \tilde{\omega}(d(x,y)),
\]
where $\tilde{\omega}(r)=\log(r_0/r)^{-(\varepsilon-1)}$. This
proves the existence of a constant $C>0$ such that, for every $n\in \mathbb{N}$,
\begin{equation}\label{flat}
\big|\sum_{j=0}^{n-1}f(\sigma^j(a_1\cdots a_nx))-f(\sigma^j(a_1\cdots a_ny))\big|\leqslant C\tilde{\omega} (d(x,y)).
\end{equation}
As a consequence of the above inequality we get two things, firstly $f\in
C^{(\varepsilon-1) \log}(X)$, and secondly,  from  \eqref{flat}, it  is easy to
see that the Dyson potential is \emph{a flat potential } with
respect to the \emph{natural coupling}, and $\tilde{\omega}$,  see
reference \cite{MR4059795} Definitions 2.1.3	and 5.2.

The previous discussion allows us to apply Theorems 3.2 and 4.1
of \cite{MR4059795} obtaining a strictly positive  eigenfunction
$h\in C^{(\varepsilon-1) \log}(X)$ associated with the eigenvalue $\rho(\mathscr{L})>0$.
By using this eigenfunction,  we can construct a normalized potential $\bar{f}\in C^{(\varepsilon-1) \log}(X)$
cohomologous to $f$ given by ${\bar f}=f+\log h-\log h\circ \sigma-\log \rho(\mathscr{L}_{f})$.
Moreover, a simple computation shows that $\bar{f}$ is also  a flat potential.
Recall that $\mathscr{L}_{\bar{f}}\mathds{1}=\mathds{1}$ and $\mathscr{L}_{\bar{f}}^{*}\mu_{f}=\mu_{f}$.

Therefore, we are in conditions to apply Theorem 5.8 of \cite[p.31]{MR4059795}
to prove the existence of a constant $D>0$ such that, for any
$\varphi\in C^{(\varepsilon-1)\log }(X)$ satisfying $\int_{X} \varphi\, d\mu_f=0$, we have that
\[
\|\mathscr{L}_{\bar{f}}^n\varphi\|_\infty \leqslant \dfrac{D}{n^{\varepsilon-1}}.
\]
Since we are assuming $\varepsilon>2$,
we get $\sum_{n=2}^\infty\|\mathscr{L}_{\bar{f}}^n\varphi\|_\infty\leqslant
\sum_{n=2}^\infty C n^{-(\varepsilon-1)}<\infty,$ which implies that
$v=-\sum_{n=0}^\infty\mathscr{L}_{\bar{f}}\varphi$ is a well defined element of $L^2(\mu_f)$
and also a solution for Poisson's equation which allows us to apply Theorem
\ref{functionalclt}, concluding the demonstration of the validity of a FCLT for this example.
\end{example}

We believe that the above example can be generalized to long-range $O(N)$ models
on the $\mathbb{N}$ lattice.  In the $O(N)$, for $N\geqslant 2$, the fibers are uncountable and given by
$E= \mathbb{S}^{n-1}$, the unit sphere in the $n$-dimensional Euclidean space.
The potential is similar and  given by the expression
$
f(x) = \sum_{n=1}^{\infty} J(n)\langle x_1,x_{n+1}\rangle,
$
where $J(n)=O(n^{-2-\varepsilon})$ and the scalar multiplication 
is replaced by the usual inner product of $\mathbb{R}^n$.
Most of the arguments above are easily generalized to this context, but the last step,
which is the inequality $\|\mathscr{L}_{\bar{f}}^n\varphi\|_\infty \leqslant D n^{1-\varepsilon}$,
would require a non-trivial extension of the main result of \cite{MR4059795}
to uncountable alphabets, which, as far as we know, remains an open problem.

We also remark that the FCLT obtained in Example \ref{ex-no-spe-gap} can be alternatively
obtained by combining the CLT in \cite{MR0503333} with tightness, proved in 
\cite{MR624694}. However, in this way, the interactions have to be so that the Gibbs measures
associated with them  satisfy the FKG inequality. 
Moreover, the observables have to be local functions,
the space $E$ has to be a subset of the real line, and the a priori measure needs to be
symmetric, that is, invariant by the map $T:E\to E$ given by $T(x)=-x$.
All of theses hypothesis are not required by our techniques. 
On the other hand, the combination of \cite{MR0503333} and \cite{MR624694} allows the exponent in the interaction 
to be smaller than two, provided the inverse temperature is small enough. 
Nevertheless, our method also works for non-ferromagnetic interactions
for which the FKG inequality might not hold.

\section{Concluding Remarks}\label{sec-concluding-rmk}

It would be very interesting to extend the results obtained in 
Subsection \ref{sec-g-measures} to a larger class of potentials
not necessarily defined by $g$-functions. The aim would be to
find a class of potentials for which the number of extreme conformal
measures is equivalent to the dimension of the SHF of $\mathbb{L}$.
This would provide other examples of continuous potentials where
the upper bound established on Theorem \ref{th:eigenspace-dimension} is
saturated.

If this equivalence holds on the uniformly absolutely summable (UAS)
class of potentials, the definition of phase transition
in the original setting devised by Dobrushin would be equivalent
to a multidimensional SHF. If this is so, a change in the
dimension of the SHF of $\mathbb{L}$
with the variation of the parameter
$\beta$, for a continuous potential $\beta f$, would
be an alternative definition of phase transition out of the UAS
class. This definition has the advantage to detect
the transition from $\dim{\mathscr{H}=0}$ to $\dim{\mathscr{H}>0}$
not present in the original definition, as we mentioned
at the end of section \ref{sec-phase-transition}.

Particular examples of continuous potentials
in the UAS class for which it would be interesting
to detect a multidimensional SHF of $\mathbb{L}$
(at low temperatures)
are the ones defining the Dyson model, 
given by
\[
f(x) = \sum_{n=1}^{\infty} \frac{x_1x_{n+1}}{n^{1+\varepsilon}}, \qquad 0<\varepsilon <1.
\]

A crucial step in this direction  has been proved recently by
Johansson, \"Oberg and Pollicott in \cite{MR3928619}. They have
shown that the set of conformal measures $\mathscr{G}^{*}(\beta f)$
has at least two extreme points, for $\beta>0$ sufficiently large.
It is natural to expect that the set of extreme points
$\mathrm{ex}(\mathscr{G}^{*}(\beta f))= \{\mu_{+}^{\beta},\mu_{-}^{\beta} \} $,
where $\mu_{\pm}^{\beta}$ are the Thermodynamic Limits with plus and minus boundary conditions,
respectively. This would follow from a modification of the argument of the Aizenman-Higuchi Theorem
for an analogous model on the lattice $\mathbb{Z}$.
Once this is established, the remaining task would be proving
the existence of a harmonic function for $\mathbb{L}$ acting on $L^1(\nu)$, where
$\nu = \frac{1}{2}\mu_{+}^{\beta}+\frac{1}{2}\mu_{-}^{\beta}$.

Several results presented here can be easily extended to the non-compact alphabets.
The only obstacle is the existence of at least one conformal measure in $\mathscr{G}^{*}$.
The existence of such conformal measures, under some regularity assumptions on the potential,
has been shown in many papers. See, for example,
\cite{MR3568728,MR4026981,MR1853808,MR3190215,MR1738951}. If we consider the non compact setting 
of \cite{MR3922537}, which allows some low regular potentials, one can use the results about the Martin Boundary developed in \cite{MR3922537} which characterize extreme conformal measures as
the directions of escape to infinity of typical orbits.


The double transpose $\mathbb{L}^{**}$ of the extension $\mathbb{L}$, on
the bidual of $L^1(\nu)$, was considered in \cite{CER17}. The authors prove the
existence of a positive (in the Banach lattice sense) eigenvector for this extension, for any continuous potential.
This can be seen as a result on the existence of a harmonic function, but in a weaker sense.
If we find sufficient conditions for such eigenvectors
to be in the image of the Jordan canonical map,
then we potentially have consequences on Theorem \ref{th:eigenspace-dimension} and
therefore some new information on the dimension of the SHF.

On the FCLT in Section \ref{FCLT}, very recently, another version
of this theorem was obtained in \cite{gallesco2020mixing} 
in the context of $g$-measures on infinite countable 
alphabets, and for non-local observables with non-summable correlations. 
By taking the one-point compactification of $\mathbb{N}$ 
and using the same technique employed in \cite{MR3377291} 
to localize the support of the conformal measure away from the infinite,
the FCLT in \cite{gallesco2020mixing} can also be partially 
recovered in our setting. It seems to be possible to combine the
ideas of the present paper and the ones in \cite{gallesco2020mixing}
to obtain another version of a FCLT for non-local observables on uncountable and non-compact
alphabets.

\break   

\appendix
\section{Technical Results}\label{sec-supp-extension}

\subsection{The Extension of the Transfer Operator}

In this section we introduce the notion of  extension of $\mathscr{L}$
and provide sufficient conditions to its existence.

\begin{lemma}[Embedding Lemma]\label{lema-unicidade}
	If $\nu\in \mathscr{M}_{1}(X)$ is such that $\mathrm{supp}(\nu)=X$,
	then $\pi:C(X)\subset \mathcal{L}^1(\nu)\to L^1(\nu)$
	is a linear injective map.
\end{lemma}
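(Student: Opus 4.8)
The plan is to reduce the lemma to the triviality of the kernel of $\pi$ and then to use the characterization of full support. The map $\pi$ is simply the restriction to $C(X)$ of the canonical map sending an integrable function to its $\nu$-equivalence class; the inclusion $C(X)\subset\mathcal{L}^1(\nu)$ underlying the statement is justified because $X$ is compact and $\nu$ is a finite measure, so every $\varphi\in C(X)$ is bounded and hence integrable. Linearity of $\pi$ is then immediate from the fact that $\nu$-equivalence classes form a vector space and the quotient map respects sums and scalar multiples. Consequently the entire content of the lemma is the injectivity of $\pi$, which, by linearity, amounts to showing that $\ker\pi=\{0\}$.

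First I would take an arbitrary $\varphi\in C(X)$ with $\pi(\varphi)=[0]_{\nu}$, i.e. $\varphi=0$ $\nu$-almost everywhere, which is precisely the statement that the Borel set $\{x\in X:\varphi(x)\neq 0\}$ has $\nu$-measure zero. The goal is to upgrade this almost-everywhere vanishing to pointwise vanishing $\varphi\equiv 0$. I would argue by contradiction: suppose there exists $x_0\in X$ with $\varphi(x_0)\neq 0$. By continuity of $\varphi$, the set $U\equiv\{x\in X:|\varphi(x)|>|\varphi(x_0)|/2\}$ is open, it is nonempty since it contains $x_0$, and it is contained in $\{x\in X:\varphi(x)\neq 0\}$. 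Because $\mathrm{supp}(\nu)=X$, every nonempty open subset of $X$ has strictly positive $\nu$-measure, so $\nu(U)>0$. But $U\subseteq\{\varphi\neq 0\}$ then forces $\nu(\{\varphi\neq 0\})\geqslant\nu(U)>0$, contradicting $\nu(\{\varphi\neq 0\})=0$. Hence no such $x_0$ exists, $\varphi\equiv 0$, and $\ker\pi=\{0\}$.

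The argument is entirely routine, so there is no genuine obstacle beyond correctly invoking the definition of full support. The only point deserving a word of care is the passage from \emph{vanishing $\nu$-a.e.} to \emph{vanishing everywhere}, which is exactly where the hypothesis $\mathrm{supp}(\nu)=X$ is indispensable: without it, a nonzero continuous function supported on a closed set disjoint from $\mathrm{supp}(\nu)$ would land in the kernel of $\pi$, and injectivity would fail. This is precisely the phenomenon flagged in the introduction when the \emph{a priori} measure $p$ is not fully supported, and it is why the full-support result (Theorem \ref{Teo-EP-fully-supp}) is the crucial ingredient that makes the extension to $L^1(\nu)$ well behaved.
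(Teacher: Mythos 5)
Your proof is correct and follows essentially the same route as the paper's: both reduce injectivity to showing that a continuous function vanishing $\nu$-a.e.\ must vanish identically, by producing a nonempty open set (your superlevel set $U$, the paper's ball $B(x,r)$) on which the function is bounded away from zero and invoking full support to give it positive measure. The only cosmetic difference is that you phrase it via $\ker\pi=\{0\}$ while the paper applies the same argument directly to the difference $\varphi-\psi$.
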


\begin{proof} 
	If $\varphi,\psi \in C(X)$ are distinct, then there is a point $x\in X$, $r>0$ such that
	$\inf_{y\in B(x,r)} |\varphi(y)-\psi(y)|>0$.
	From the hypothesis it follows that $\nu(B(x,r))>0$
	and so $\pi(\varphi)\neq \pi(\psi)$.
\end{proof}

\begin{remark}
	It worth to mention that $L^1(\nu)$ can be, in some sense,
	a much smaller space than $C(X)$.
	An extreme example is obtained by taking $\nu=\delta_{x}$,
	the Dirac measure concentrated on $x\in X$.
	In this case $\dim_{\mathbb{R}} L^{1}(\nu)=1$,
	while $\dim_{\mathbb{R}}C(X)=\infty$. Therefore
	the restriction of the natural map $\pi$ to $C(X)$ can not be injective.
	In this case the support of $\nu$ is a singleton.
\end{remark}

\begin{definition}\label{def-ext-CC-to-L1L1}
	Let $\nu\in\mathscr{M}_{1}(X)$. We say that a bounded positive linear operator
	$\mathbb{L}:L^{1}(\nu)\to L^1(\nu)$ is an extension of a transfer operator
	$\mathscr{L}:C(X)\to C(X)$ if the vector space
	$C(X)$ embeds in $L^1(\nu)$ and for any $\varphi\in C(X)$ we have
	$\mathbb{L}[\varphi]_{\nu}\cap C(X) = \{\mathscr{L}\varphi\}$.
\end{definition}

Since the support of a measure on a separable topological space is a closed set it follows that
$U\equiv X\setminus\textrm{supp}(\nu)$ is always an open set. If $U$ is not empty, then
the set $\mathbb{L}[\varphi]_{\nu}\cap C(X)$ has an infinite number of elements and so
the conditions of the above definition are not satisfied.

\begin{theorem}\label{Teo-Extension-L1}
	If $\mathrm{supp}(p)=E$ and $\nu\in\mathscr{G}^{*}$ then
	$\mathscr{L}:C(X)\to C(X)$ extends to a
	bounded positive linear operator $\mathbb{L}:L^1(\nu)\to L^1(\nu)$.
	Moreover, the operator norm of this extension is  $\|\mathbb{L}\|_{\mathrm{op}}=\rho(\mathscr{L})$.
\end{theorem}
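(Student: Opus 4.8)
The plan is to realize $\mathbb{L}$ as the unique continuous extension to $L^1(\nu)$ of the densely defined operator $\mathscr{L}$, after first checking that $\mathscr{L}$ is bounded for the $L^1(\nu)$-norm on $C(X)$. First I would invoke the hypothesis $\mathrm{supp}(p)=E$ together with Theorem \ref{Teo-EP-fully-supp} to conclude $\mathrm{supp}(\nu)=X$; by the Embedding Lemma (Lemma \ref{lema-unicidade}) this makes $\pi:C(X)\to L^1(\nu)$ injective, so that $C(X)$ genuinely embeds in $L^1(\nu)$ and the open set $U=X\setminus\mathrm{supp}(\nu)$ discussed after Definition \ref{def-ext-CC-to-L1L1} is empty. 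This is precisely what makes the notion of extension well-posed.

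The key estimate is the $L^1(\nu)$-boundedness. For $\varphi\in C(X)$, positivity of $\mathscr{L}$ yields the pointwise bound $|\mathscr{L}\varphi|\leqslant\mathscr{L}|\varphi|$, and the eigenmeasure identity $\mathscr{L}^{*}\nu=\rho(\mathscr{L})\nu$ gives
\[
\|\mathscr{L}\varphi\|_{L^1(\nu)} = \int_X |\mathscr{L}\varphi|\, d\nu \leqslant \int_X \mathscr{L}|\varphi|\, d\nu = \rho(\mathscr{L})\int_X |\varphi|\, d\nu = \rho(\mathscr{L})\,\|\varphi\|_{L^1(\nu)}.
\]
Since $C(X)$ is dense in $L^1(\nu)$ (standard for finite Borel measures on compact metric spaces) and $L^1(\nu)$ is complete, this bounded densely-defined operator extends uniquely to a bounded linear operator $\mathbb{L}:L^1(\nu)\to L^1(\nu)$ with $\|\mathbb{L}\|_{\mathrm{op}}\leqslant\rho(\mathscr{L})$.

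Next I would verify the remaining requirements. For positivity, a nonnegative $u\in L^1(\nu)$ can be approximated in $L^1(\nu)$ by continuous $\varphi_n$; replacing $\varphi_n$ by $\varphi_n^{+}\in C(X)$ keeps the approximation (as $|\varphi_n^{+}-u|\leqslant|\varphi_n-u|$) while ensuring $\varphi_n^{+}\geqslant 0$, so that $\mathbb{L}\varphi_n^{+}=\mathscr{L}\varphi_n^{+}\geqslant 0$ and, passing to the $L^1$-limit, $\mathbb{L}u\geqslant 0$ $\nu$-a.e. The identity $\mathbb{L}[\varphi]_\nu\cap C(X)=\{\mathscr{L}\varphi\}$ of Definition \ref{def-ext-CC-to-L1L1} holds because $\mathbb{L}[\varphi]_\nu=[\mathscr{L}\varphi]_\nu$ by construction, and since $\mathrm{supp}(\nu)=X$ two continuous functions agreeing $\nu$-a.e. coincide, so $[\mathscr{L}\varphi]_\nu$ has exactly one continuous representative. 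Finally, for the operator norm the upper bound is already in hand; for the lower bound I would evaluate on the constant function $\mathds{1}$, using $\mathscr{L}\mathds{1}\geqslant 0$:
\[
\|\mathbb{L}\mathds{1}\|_{L^1(\nu)} = \int_X \mathscr{L}\mathds{1}\, d\nu = \rho(\mathscr{L})\,\nu(X) = \rho(\mathscr{L}) = \rho(\mathscr{L})\,\|\mathds{1}\|_{L^1(\nu)},
\]
which forces $\|\mathbb{L}\|_{\mathrm{op}}\geqslant\rho(\mathscr{L})$, hence equality.

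The argument is mostly routine functional analysis; the single genuinely load-bearing input is the full support of $\nu$ supplied by Theorem \ref{Teo-EP-fully-supp}, without which $\pi$ fails to be injective and the very statement ``$\mathscr{L}$ extends'' becomes ill-defined, as the Remark following Definition \ref{def-ext-CC-to-L1L1} illustrates. The only other point requiring mild care is arranging the approximating sequences to be simultaneously continuous and nonnegative, which the positive-part trick above handles.
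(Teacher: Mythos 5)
Your proposal is correct and follows essentially the same route as the paper: use Theorem \ref{Teo-EP-fully-supp} to get full support of $\nu$ and hence the embedding of $C(X)$ into $L^1(\nu)$, establish the bound $\|\mathscr{L}\varphi\|_{L^1(\nu)}\leqslant\rho(\mathscr{L})\|\varphi\|_{L^1(\nu)}$ from $|\mathscr{L}\varphi|\leqslant\mathscr{L}|\varphi|$ and the eigenmeasure identity, extend by density, and saturate the norm on $\varphi\equiv\mathds{1}$. Your explicit positive-part argument for positivity of the extension is a small elaboration the paper leaves implicit, but it is not a different approach.
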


\begin{proof}
	Since $\mathrm{supp}(p)=E$  and $\nu\in\mathscr{G}^{*}$ we can apply  Theorem \ref{Teo-EP-fully-supp}
	to conclude that $\mathrm{supp}(\nu)=X$ and that $C(X)$ embeds in $L^1(\nu)$.
	Consider the linear operator $\widetilde{\mathbb{L}}:\pi(C(X))\to L^{1}(\nu)$ defined by
	$\widetilde{\mathbb{L}}\pi\varphi\equiv \pi\mathscr{L}\varphi$,
	for all $\varphi\in C(X)$. Since $\pi:C(X)\to L^1(\nu)$ is injective it follows that $\widetilde{\mathbb{L}}$
	is well-defined.
	From the definition of $\mathscr{G}^{*}$ we get that
	\begin{align*}
		\|\widetilde{\mathbb{L}}\pi\varphi \|_{L^1(\nu)} =\int_{X} |\widetilde{\mathbb{L}}\pi\varphi| \, d\nu
		=  \int_{X} |\pi\mathscr{L}\varphi| \, d\nu
		\leqslant \int_{X} \mathscr{L}|\varphi| \, d\nu
		& \leqslant \rho(\mathscr{L}) \int_{X}|\varphi| \, d\nu
		\\
		& =  \rho(\mathscr{L}) \|\pi\varphi \|_{L^1(\nu)}.
	\end{align*}

	The linearity of $\widetilde{\mathbb{L}}:\pi(C(X))\to L^{1}(\nu)$ and the
	above inequality imply that $\widetilde{\mathbb{L}}$
	is a Lipschitz function. Recalling that $\pi (C(X))$
	is dense subset of  $L^1(\nu)$ it follows from
	a classical result on real Analysis that $\widetilde{\mathbb{L}}$ has a bounded
	linear extension, which will be called $\mathbb{L}$, defined on the whole space $L^1(\nu)$.
	From  construction the identity
	$\mathbb{L}[\varphi]_{\nu}\cap C(X) = \{\mathscr{L}\varphi\}$ holds for any
	continuous test function $\varphi$.
	To finish the proof, we observe that the identity  $\|\mathbb{L}\|=\rho(\mathscr{L})$
	is an immediate consequence of the above inequalities
	being attained by $\varphi\equiv 1$ and the  positivity of $\mathbb{L}$.
\end{proof}

Note that Theorem \ref{Teo-Extension-L1} can be generalized to $L^{p}(X,\mathscr{B}(X),\nu)$
for any $p\in ([1,+\infty]$. The main difference is that for $p> 1$
the spectral radius $\rho(\mathscr{L})$ will not coincide with the operator norm.

\subsection{Integral Representation of the Extensions}\label{sec-int-representations}

Although the abstract argument in the last section is simple and direct,
it is not possible to conclude from it how to concrete represent
the action of the operator $\mathbb{L}$ on an arbitrary element of $L^1(\nu)$.
It is natural to expect, for example, that for any $B\in\mathscr{B}(X)$ the following holds.
If $\psi$ is a function in $\mathcal{L}^1(\nu)$ such that
$[\psi]_{\nu}= \mathbb{L}1_{B}$ then it is natural to ask whether
\begin{equation}\label{eq-int-rep-squareL}
	\psi(x) = \int_{E} \exp(f(ax))1_{B}(ax)\, dp(a) \ \nu-a.e..
\end{equation}
The answer to this question is positive if the dynamics is not singular  with
respect to the conformal measure $\nu$.
Actually  this is usually  the most used condition in most of the works that
consider the extensions of the classical Ruelle transfer operator to the Lebesgue spaces  $L^p(\nu)$,
with $1\leqslant p<+\infty$.

\medskip

\textbf{The Hypothesis H1.}
Let $\nu\in \mathscr{M}_{1}(X)$ be an arbitrary Borel probability measure on $X$.
By using the product structure of $X$ we can also consider the product measure
$p\times \nu$ as an element of	$\mathscr{M}_{1}(X)$,
which is defined on the cylinder sets in a natural way.
We will say that a pair $(p,\nu)$, where $\nu\in \mathscr{M}_{1}(X)$
and $p\in \mathscr{M}_{1}(E)$,
satisfies the hypothesis \eqref{hip1} if
\begin{equation}\tag{H1}\label{hip1}
	\exists K>0 \ \text{such that} \quad
	(p\times\nu)(B)\leqslant K\nu(B), \ \forall B\in\mathscr{B}(X).
\end{equation}

\begin{proposition}\label{prop-extensao-l1}
	Suppose that $(p,\nu)$ is a pair satisfying the hypothesis \eqref{hip1}
	then the transfer operator $\mathscr{L}:C(X)\to C(X)$ can be naturally extended to
	a positive linear transformation $L:\mathrm{dom}(L)\subset \mathcal{L}^{1}(\nu)\to\mathcal{L}^{1}(\nu)$
	given by
	\begin{align}\label{eq-Lphi}
		L\varphi(x)\equiv \int_{E}\exp(f(ax))\varphi(ax)\, dp(a), \quad \forall x\in X
	\end{align}
	and moreover
	\begin{equation}\label{eq-bound-L}
		\int_{X} |L\varphi|\, d\nu
		\leqslant
		Ke^{\|f\|_{\infty}} \int_{X}|\varphi|\, d\nu, \quad \forall \varphi\in \mathrm{dom}(L).
	\end{equation}
\end{proposition}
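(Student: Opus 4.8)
The plan is to reduce the estimate \eqref{eq-bound-L} to the defining inequality \eqref{hip1} through a Tonelli-type argument, after which the remaining assertions (well-definedness, measurability, positivity, and the fact that $L$ genuinely extends $\mathscr{L}$) follow with little effort.

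First I would unwind the meaning of the product measure $p\times\nu$ on $X$. Using the canonical identification $X\cong E\times X$ via the concatenation map $(a,y)\mapsto ay$, the measure $p\times\nu$ is the pushforward of $p\otimes\nu$ under this map, so that for every non-negative $\mathscr{B}(X)$-measurable $g$ one has
\begin{equation*}
\int_X g\, d(p\times\nu) = \int_E\int_X g(ay)\, d\nu(y)\, dp(a).
\end{equation*}
The next step is to upgrade \eqref{hip1}, which is stated only for indicator functions, to the inequality $\int_X g\, d(p\times\nu)\leqslant K\int_X g\, d\nu$ valid for all non-negative measurable $g$; this is the routine measure-theoretic bootstrap from indicators to simple functions by linearity and then to general non-negative $g$ by monotone convergence. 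Equivalently, \eqref{hip1} asserts that $p\times\nu\ll\nu$ with Radon--Nikodym derivative bounded by $K$.

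With this in hand I would take $\mathrm{dom}(L)$ to be the set of $\varphi\in\mathcal{L}^1(\nu)$ for which the defining integral converges and carry out the main computation. Joint measurability of $(a,x)\mapsto \exp(f(ax))\varphi(ax)$ follows from the continuity of the concatenation map together with the measurability of $f$ and $\varphi$, so Tonelli applies to the non-negative integrand $|\varphi|$, giving
\begin{equation*}
\int_X\int_E |\varphi(ax)|\, dp(a)\, d\nu(x) = \int_X |\varphi|\, d(p\times\nu) \leqslant K\int_X |\varphi|\, d\nu < \infty.
\end{equation*}
Finiteness of this double integral shows that $\int_E |\varphi(ax)|\, dp(a)<\infty$ for $\nu$-a.e.\ $x$, so $L\varphi(x)$ is well-defined $\nu$-a.e.\ and is a measurable function of $x$; in particular this confirms that $\mathrm{dom}(L)$ contains $\mathcal{L}^1(\nu)$ up to a $\nu$-null set. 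Bounding the modulus of the integral by the integral of the modulus and pulling out the uniform factor $e^{\|f\|_\infty}$ (using $f(ax)\leqslant\|f\|_\infty$), I obtain
\begin{equation*}
\int_X |L\varphi|\, d\nu \leqslant e^{\|f\|_\infty}\int_X\int_E |\varphi(ax)|\, dp(a)\, d\nu(x) \leqslant Ke^{\|f\|_\infty}\int_X |\varphi|\, d\nu,
\end{equation*}
which is precisely \eqref{eq-bound-L}.

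Finally, positivity of $L$ is immediate from the formula \eqref{eq-Lphi}, since $\exp(f(ax))>0$ forces $L\varphi\geqslant 0$ whenever $\varphi\geqslant 0$, and linearity is clear; that $L$ extends $\mathscr{L}$ is simply the observation that for $\varphi\in C(X)$ the formula \eqref{eq-Lphi} coincides verbatim with the definition \eqref{def-Lf} of $\mathscr{L}\varphi$. I do not expect any genuine obstacle here: the only points requiring care are making the identification of $p\times\nu$ with the iterated integral precise and promoting \eqref{hip1} from indicators to arbitrary non-negative integrands, both of which are routine. The estimate \eqref{eq-bound-L} is the crux, and it is essentially a one-line consequence of \eqref{hip1} once Tonelli's theorem has been invoked.
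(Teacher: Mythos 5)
Your proposal is correct and follows essentially the same route as the paper: the bound \eqref{eq-bound-L} is obtained by dominating $|L\varphi|$ pointwise by $e^{\|f\|_{\infty}}\int_{E}|\varphi(ax)|\,dp(a)$ and then invoking \eqref{hip1} on the resulting double integral, which is exactly the paper's one-line estimate. You simply make explicit the steps the paper compresses into ``elementary results of the Lebesgue integral'' (the identification of $p\times\nu$ with the iterated integral via concatenation, the bootstrap of \eqref{hip1} from indicators to non-negative measurable functions, and Tonelli), and your observation that the defining integral converges only $\nu$-a.e.\ rather than everywhere is consistent with the remark the paper places immediately after the proposition.
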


\begin{proof}
	The set $\mathrm{dom}(L)$ is the subset of all $\varphi\in \mathcal{L}^{1}(\nu)$
	for which the following expression makes sense for all $x\in X$
	\[
	L\varphi(x)\equiv \int_{E}\exp(f(ax))\varphi(ax)\, dp(a).
	\]
	This is clearly a linear subspace of $\mathcal{L}^{1}(\nu)$, and furthermore
	contains $C(X)$. Of course, $L\varphi \in M(X,\mathscr{B}(X))$. To show that $L\varphi \in
	\mathcal{L}^{1}(\nu)$
	and the validity of the inequality \eqref{eq-bound-L}
	it is enough to use the hypothesis \eqref{hip1}. In fact, for any $\varphi\in \mathrm{dom}(L)$,
	follows from \eqref{hip1} and elementary results of the Lebesgue integral that it
	\[
	\int_{X} |L\varphi|\, d\nu
	\leqslant
	\int_{X}\int_{E} \exp(f(ax))|\varphi(ax)|\, dp(a) d\nu(x)
	\leqslant
	Ke^{\|f\|_{\infty}} \int_{X}|\varphi|\, d\nu.
	\qedhere
	\]
\end{proof}

\bigskip

The  inequality \eqref{eq-bound-L} implies, in particular, that if
$\varphi=\psi$ $\nu$-a.e. are bounded measurable functions, then
that  $L\varphi=L\psi$ $\nu$-a.e. which means that $L$ preserves
$\nu$-equivalence classes. Therefore $L$ induces a bounded linear
operator  $\widetilde{\mathbb{L}}$ defined on a dense subset of $L^1(\nu)$.
By arguing as before we can extended $\widetilde{\mathbb{L}}$ to whole
$L^1(\nu)$ and this extension is precisely the operator
$\mathbb{L}:L^1(\nu)\to L^1(\nu)$ provided by Theorem \ref{Teo-Extension-L1}.

\begin{remark}
	For a general $\varphi\in \mathcal{L}^1(\nu)$ and $x\in X$
	the identity \eqref{eq-Lphi} may not be well-defined,
	even assuming \ref{hip1}. On the other hand, this assumption is enough to ensure that
	the rhs above is well-defined $\nu$-a.e..
\end{remark}

\begin{proposition}\label{mu_f satisfies C1}
	Let $\nu\in \mathscr{G}^{*}$ be a conformal measure
	and $p$ the a priori measure used to define $\mathscr{L}$.
	Then the pair $(p,\nu)$ satisfies the hypothesis \eqref{hip1}.
	As a consequence if the a priori measure $p$ has full support, then
	$\mathbb{L}$ has an integral representation as in \eqref{eq-int-rep-squareL}.
\end{proposition}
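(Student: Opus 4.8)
The plan is to prove the bound \eqref{hip1} by a direct comparison argument and then read off the integral representation from Proposition \ref{prop-extensao-l1}. The starting point is that $X$ is compact and $f$ continuous, so $f$ is bounded and $\exp(f(ay))\geqslant e^{-\|f\|_{\infty}}$ for every $y\in X$ and $a\in E$. Recalling that the product measure acts on a nonnegative measurable $g$ by $\int_X g\, d(p\times\nu)=\int_X\int_E g(ay)\,dp(a)\,d\nu(y)$, where $ay=(a,y_1,y_2,\ldots)$, I would write for an arbitrary $B\in\mathscr{B}(X)$ first $(p\times\nu)(B)=\int_X\int_E \mathds{1}_B(ay)\,dp(a)\,d\nu(y)$, and then, using $1\leqslant e^{\|f\|_{\infty}}\exp(f(ay))$,
\[
(p\times\nu)(B)\leqslant e^{\|f\|_{\infty}}\int_X\int_E \exp(f(ay))\,\mathds{1}_B(ay)\,dp(a)\,d\nu(y)=e^{\|f\|_{\infty}}\int_X \mathscr{L}(\mathds{1}_B)\,d\nu,
\]
the last equality being the defining formula \eqref{def-Lf} of $\mathscr{L}$ evaluated at the bounded measurable function $\mathds{1}_B$. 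Here one checks joint measurability of $(a,y)\mapsto \exp(f(ay))\mathds{1}_B(ay)$ from continuity of $(a,y)\mapsto ay$ and of $f$, so that Tonelli applies and $\mathscr{L}(\mathds{1}_B)$ is measurable. Everything thus reduces to computing $\int_X \mathscr{L}(\mathds{1}_B)\,d\nu$.

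The heart of the argument, and the step needing the most care, is that the conformal identity $\int_X \mathscr{L}\varphi\,d\nu=\rho(\mathscr{L})\int_X\varphi\,d\nu$ is guaranteed by the definition of $\mathscr{G}^{*}$ only for $\varphi\in C(X)$, whereas I must apply it to the discontinuous $\mathds{1}_B$. I would bridge this gap by a measure-theoretic identification: define $\lambda(B)\equiv\int_X \mathscr{L}(\mathds{1}_B)\,d\nu$. Positivity and linearity of $\mathscr{L}$ together with monotone convergence show that $\lambda$ is a finite positive Borel measure on $X$ and that $\int_X\varphi\,d\lambda=\int_X\mathscr{L}\varphi\,d\nu$ for every nonnegative measurable $\varphi$ (first for simple $\varphi$ by linearity, then in general by monotone approximation $s_n\uparrow\varphi$, using $\mathscr{L}(s_n)\uparrow\mathscr{L}(\varphi)$). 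In particular, for continuous $\varphi$ the conformal relation gives $\int_X\varphi\,d\lambda=\rho(\mathscr{L})\int_X\varphi\,d\nu$. Since two finite Borel measures on the compact metric space $X$ that agree on $C(X)$ must coincide, I conclude $\lambda=\rho(\mathscr{L})\,\nu$, that is, $\int_X\mathscr{L}(\mathds{1}_B)\,d\nu=\rho(\mathscr{L})\,\nu(B)$ for every $B\in\mathscr{B}(X)$.

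Combining the two displays yields $(p\times\nu)(B)\leqslant e^{\|f\|_{\infty}}\rho(\mathscr{L})\,\nu(B)$, so \eqref{hip1} holds with $K=e^{\|f\|_{\infty}}\rho(\mathscr{L})$; note that this first assertion required no support hypothesis. For the second assertion, assume $\mathrm{supp}(p)=E$. Then Theorem \ref{Teo-EP-fully-supp} gives $\mathrm{supp}(\nu)=X$, so $C(X)$ embeds in $L^1(\nu)$ and the operator $\mathbb{L}$ of Theorem \ref{Teo-Extension-L1} is defined. Having just verified \eqref{hip1}, I may invoke Proposition \ref{prop-extensao-l1} to obtain the concrete operator $L$ given by the integral formula \eqref{eq-Lphi}, and by the remark following that proposition this $L$ induces precisely $\mathbb{L}$. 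Since $\mathds{1}_B\in\mathrm{dom}(L)$ for every Borel $B$ (the defining integral is finite because $f$ is bounded), this gives $\mathbb{L}\mathds{1}_B=[L\mathds{1}_B]_{\nu}$, which is exactly the integral representation \eqref{eq-int-rep-squareL}.
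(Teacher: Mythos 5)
Your proof is correct, but it reaches \eqref{hip1} by a genuinely different route than the paper. The paper never applies the conformal identity to a discontinuous function: it first proves the inequality only for open rectangles $U\times V$, sandwiching $\mathds{1}_{U\times V}$ from below by products of Urysohn functions $\psi_n(x_1)\phi_n(\sigma(x))$ so that the relation $\int_X\mathscr{L}\Psi_n\,d\nu=\rho(\mathscr{L})\int_X\Psi_n\,d\nu$ can be used for continuous $\Psi_n$, and then pays for this restriction with a separate extension step (inner/outer regularity to pass from open to measurable rectangles, the algebra of finite disjoint unions of rectangles, and Carath\'eodory's extension theorem) to reach all of $\mathscr{B}(X)$. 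You instead extend the conformal identity itself to all Borel sets in one stroke: you observe that $B\mapsto\int_X\mathscr{L}(\mathds{1}_B)\,d\nu$ defines a finite positive Borel measure $\lambda$ satisfying $\int_X\varphi\,d\lambda=\int_X\mathscr{L}\varphi\,d\nu$ for all nonnegative measurable $\varphi$, note that $\lambda$ and $\rho(\mathscr{L})\nu$ agree on $C(X)$, and invoke uniqueness of the Riesz representation (every finite Borel measure on a compact metric space being regular) to conclude $\lambda=\rho(\mathscr{L})\nu$; the pointwise bound $1\leqslant e^{\|f\|_{\infty}}e^{f}$ then gives \eqref{hip1} with the same constant $K=\rho(\mathscr{L})e^{\|f\|_{\infty}}$. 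Your argument is shorter, dispenses entirely with the auxiliary proposition on comparing measures via open rectangles, and as a byproduct yields the exact identity $\int_X\mathscr{L}(\mathds{1}_B)\,d\nu=\rho(\mathscr{L})\nu(B)$ (hence two-sided comparability of $p\times\nu$ and $\nu$), at the modest cost of having to verify joint measurability so that Tonelli applies to $\mathscr{L}$ acting on indicators --- which you do. The deduction of the integral representation from Theorem \ref{Teo-EP-fully-supp} and Proposition \ref{prop-extensao-l1} matches the paper's intent exactly.
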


\begin{proof}
	The goal is to prove inequality in
	\eqref{hip1} for every Borel set
	$B\in \mathscr{B}(X)$.
	We first show its validity for a family of rectangles
	\[
	\mathscr{R}=\{U\times V: U \subseteq E \text{ and } V \subseteq X\  \text{are open sets}\}.
	\]
	
	Let $B \in \mathscr{R}$ of the form $B = U \times V$. Since $U$ is open in
	$E$, there is an increasing sequence of continuous functions $\psi_n: E
	\rightarrow [0,1]$ such that, for every $n\in \mathbb{N}$, $\psi_n \uparrow
	1_{U}$ pointwisely and, therefore, in $L^1(p)$. Similarly, there
	is an increasing sequence of continuous functions $\phi_n: X \rightarrow
	[0,1]$ (Urysohn functions) such that $\phi_n \uparrow 1_{V}$ again
	pointwisely and in $L^1(\nu)$. Therefore for any $x\in X$ we have that
	$\Psi_n(x) \equiv \psi_n(x_1) \phi_n(\sigma(x)) \uparrow 1_B(x)$.
	Clearly $\Psi_n\in C(X)$ and we have
	\begin{align*}
		\nu(B)
		& = \int_X 1_{B} d\nu \geqslant \int_X \Psi_n d\nu
		\\
		& = \frac{1}{\rho(\mathscr{L})}\int_X \Psi_n \, d[\mathscr{L}^{*}\nu]
		= \frac{1}{\rho(\mathscr{L})}\int_X \left( \mathscr{L} \Psi_n\right)\, d \nu
		\\
		& = \frac{1}{\rho(\mathscr{L})} \int_X \left[ \int_E
		\exp \left(f(ay) \right) \Psi_n(ax)\, dp(a) \right] d \nu(x)
		\\
		& = \frac{1}{\rho(\mathscr{L})}
		\int_X \int_{E} \exp \left( f(a x) \right) \psi_n(a) \phi_n(\sigma(ax)) \ dp(a) d \nu(x)
		\\
		& \geqslant \frac{e^{-\|f\|_\infty}}{\rho(\mathscr{L})} \int_{E} \psi_n(a)\, dp(a) \int_X \phi_n(x)\, d\nu(x).
	\end{align*}
	
	Thus, taking the limit when $n\rightarrow \infty$, one can conclude that
	\begin{equation*}
		\nu(B) \geqslant  \frac{e^{-\|f\|_\infty}}{\rho(\mathscr{L})}  p(U) \nu(V)
		=  \frac{e^{-\|f\|_\infty}}{\rho(\mathscr{L})} (p \times \mu)(B).
	\end{equation*}
	That is, inequality in \eqref{hip1} holds for any
	open rectangle and $K=\rho(\mathscr{L})e^{\|f\|_{\infty}}$.
	
	\medskip
	Since  the inequality in \eqref{hip1} holds for any element of
	$\mathscr{R}$ (which generates the Borel sigma-algebra $\mathscr{B}(X)$),
	it would be natural to expect that the same should be true for every Borel set of $X$.
	This is actually true,	but a careful argument is required to give a rigorous proof of this fact.
\end{proof}

To complete the proof we show first that the rectangles
of open sides approximate the rectangles of measurable
sides. Next we use that the family of finite disjoint unions of
rectangles with measurable sides form an algebra and conclude
by applying the Carathéodory  Extension Theorem.

The following proposition summarizes what was discussed on the last paragraphs.
This should be a very well known result, and we only prove it here because
we do not found a precise reference for this inequality.

\begin{proposition}
	Let $E$ and $F$ be two compact metric spaces and
	$\mu, \nu$ two Borel measures on
	the product space $(E \times F,\mathscr{B}(E\times F))$.
	If $\nu(U\times V) \leqslant \mu(U\times V)$
	for every open rectangle $U\times V$,
	then $\nu(B) \leqslant \mu(B)$ for every $B \in \mathscr{B}(E\times F)$.
\end{proposition}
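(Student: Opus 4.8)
The plan is to bootstrap the hypothesized inequality from open rectangles up to all Borel sets, using throughout that a finite Borel measure on a compact metric space is outer regular by open sets and inner regular by closed sets (this holds since such a space is metrizable and separable). The one genuine difficulty is that $\nu \leqslant \mu$ is a \emph{one-sided} inequality, so it is not preserved under inclusion--exclusion: knowing $\nu(R)\leqslant\mu(R)$ on two overlapping rectangles tells us nothing directly about their union. The device that circumvents this is to first upgrade the inequality to \emph{all} rectangles with measurable sides, so that afterwards one may work entirely inside the algebra of finite disjoint unions of such rectangles, where only additivity (never inclusion--exclusion) is invoked. I regard this upgrade as the crux of the argument.

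First I would extend the inequality from open rectangles to arbitrary measurable rectangles $A\times B$ with $A\in\mathscr{B}(E)$ and $B\in\mathscr{B}(F)$, one coordinate at a time. Fix an open set $V\subseteq F$. The set functions $A\mapsto\nu(A\times V)$ and $A\mapsto\mu(A\times V)$ are finite Borel measures on $E$ that, by hypothesis, satisfy the inequality on open sets. For any open $U\supseteq A$ one has $\nu(A\times V)\leqslant\nu(U\times V)\leqslant\mu(U\times V)$; taking the infimum over such $U$ and invoking outer regularity of $\mu(\,\cdot\times V)$ gives $\nu(A\times V)\leqslant\mu(A\times V)$ for every Borel $A$. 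Repeating the identical argument in the second coordinate, now with $A$ a fixed Borel set and $V$ ranging first over open and then over Borel subsets of $F$, yields $\nu(A\times B)\leqslant\mu(A\times B)$ for all measurable rectangles.

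Next I would pass to the algebra $\mathscr{A}$ of finite disjoint unions of measurable rectangles, which generates $\mathscr{B}(E\times F)$ and on which the inequality now follows from finite additivity alone: if $S=\bigsqcup_{i=1}^{n}(A_i\times B_i)$ then $\nu(S)=\sum_i\nu(A_i\times B_i)\leqslant\sum_i\mu(A_i\times B_i)=\mu(S)$. The key point enabling the final step is that the difference of two rectangles is itself a finite disjoint union of measurable rectangles, since $(A_1\times B_1)\setminus(A_2\times B_2)=\big((A_1\setminus A_2)\times B_1\big)\sqcup\big((A_1\cap A_2)\times(B_1\setminus B_2)\big)$, so $\mathscr{A}$ is indeed closed under differences.

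Finally I would reach all open sets and then all Borel sets. Since $E\times F$ is second countable, every open $U$ is a countable union $\bigcup_n R_n$ of open rectangles; the disjointification $S_n=R_n\setminus(R_1\cup\cdots\cup R_{n-1})$ stays inside $\mathscr{A}$ by the closure property above, so countable additivity gives $\nu(U)=\sum_n\nu(S_n)\leqslant\sum_n\mu(S_n)=\mu(U)$. For an arbitrary Borel set $S$ and any open $U\supseteq S$ one then has $\nu(S)\leqslant\nu(U)\leqslant\mu(U)$, and taking the infimum over open $U\supseteq S$ together with outer regularity of $\mu$ yields $\nu(S)\leqslant\mu(S)$, which completes the proof. (This last passage from the generating algebra to $\mathscr{B}(E\times F)$ is where a Carath\'eodory-type extension/uniqueness argument could equally be used, but the regularity route is cleaner and avoids constructing outer measures.)
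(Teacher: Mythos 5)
Your proof is correct and follows essentially the same path as the paper's: first upgrade the inequality from open rectangles to arbitrary measurable rectangles via outer regularity of finite Borel measures on metric spaces (one coordinate at a time), then pass to the algebra of finite disjoint unions of measurable rectangles by additivity, and finally to all of $\mathscr{B}(E\times F)$. The only divergence is the last step, where the paper invokes the Carath\'eodory extension theorem to compare the induced outer measures, whereas you reach open sets of $E\times F$ via second countability and disjointification inside the algebra and then conclude by outer regularity of $\mu$ --- an equally valid and arguably more self-contained finish, which you yourself note is interchangeable with the Carath\'eodory route.
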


\begin{proof}
	The first step towards this generalization is to approximate an arbitrary  rectangle $R = C\times D$ with
	measurable sides, i.~e., $R \in \mathscr{B}(E \times F)$  by a
	sequence of open rectangles.
	
	Since every Borel measure on a metric space is regular \cite{MR2169627}, the set function
	$\nu(C\times \cdot)$  defines a regular
	measure on $(F, \mathscr{B}(F))$. Hence, for every $\epsilon = 1/n$, it is
	possible to find an open set $V_n \supseteq D$ such that
	$\nu(C \times V_n) \leqslant \nu(C \times D) + 1/n$.
	Again, by regularity of $\nu(\cdot \times V_n)$, it is possible to find $U_n$
	open such that
	$\nu(U_n \times V_n) \leqslant \nu(C \times V_n) + 1/n$.
	Piecing  together the last two inequalities, we get that
	$\nu(U_n \times V_n) \leqslant \nu(C \times D) + 2/n$.
	This construction gives a sequence of open rectangles $(U_n \times V_n)$ which
	approximates $(C \times D)$ from above and it is such that $\nu(C\times D) =
	\lim\nu(U_n \times V_n)$.
	
	Using the above result for open rectangles we get that
	\[
		\nu(R)
		=
		\inf_{\substack{U\times V \subseteq X \text{ open} \\ R \subseteq U\times V}} \nu(U\times V)
		\geqslant
		\inf_{\substack{U\times V \subseteq X \text{ open} \\ R \subseteq U\times V}} \mu(U\times V)
		\geqslant
		\inf_{\substack{W \subseteq X \text{ open}         \\ R \subseteq W}} \mu(W)
		=
		\mu(R).
	\]
	This means that the desired inequality holds for every mensurable rectangle
	$R=C\times B$.
	
	It is clear that, if the above inequality holds separately for two disjoint
	measurable rectangles $R_1$ and $R_2$, it also holds for their union
	$R_1\cup R_2$ and more generally for any finite pairwise disjoint union of rectangles.
	Recall that the family $\mathscr{C}$
	of unions of pairwise disjoint mensurable rectangles forms an algebra of
	sets.
	
	From the last paragraph we conclude that $\mu|_\mathscr{C}\leqslant \nu|_\mathscr{C}$.
	Therefore the outer-measures associated to them will satisfy
	$(\mu|_\mathscr{C})^{*}\leqslant (\nu|_\mathscr{C})^{*}$.
	Since $\mu|_\mathscr{C}$ and $\nu|_\mathscr{C}$ are countable-additive pre-measures it follows from
	Carathéodory's Extension Theorem that $\mu=(\mu|_\mathscr{C})^{*}\leqslant (\nu|_\mathscr{C})^{*}=\nu$
	on $\sigma(\mathscr{C})=\mathscr{B}(E\times F)$.
\end{proof}

\subsection{The Spectrum of the Extended Operator}

In this section we obtain the spectrum of the extensions $\mathscr{L}$. 
The result is similar to the one known for finite alphabets  see, for example, \cite{MR2656475}.
However, its generalization for uncountable alphabets, presented here, is new. 
Before proceed, we should remind that when talking about the spectrum of the extension of the transfer 
operator $\mathbb{L}$, we are actually referring to the spectrum of its 
standard complexification, but for the sake of simplicity we 
will keep the same notation for both operators.

\begin{proposition}\label{prop-spectrum}
	Let $f$ be a general continuous potential and suppose that the a priori measure 
	satisfies the full support condition $\mathrm{supp}(p)=E$. Let $\nu\in\mathscr{G}^{*}$
	an arbitrary conformal measure and $\mathbb{L}:L^1(\nu)\to L^1(\nu)$ the extension 
	of the transfer operator associated to the potential $f$. 
	Then 
	$
	\mathrm{spec}(\mathbb{L}) = \{\lambda\in\mathbb{C}: |\lambda| \leqslant  \rho(\mathscr{L})\}
	$.  
\end{proposition}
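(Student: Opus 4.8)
The plan is to prove the two inclusions separately, the nontrivial one being $\{\lambda:|\lambda|\leqslant\rho(\mathscr{L})\}\subseteq\mathrm{spec}(\mathbb{L})$. First, adding a constant $c$ to the potential replaces $\mathscr{L}$ by $e^{c}\mathscr{L}$ and hence $\mathbb{L}$ by $e^{c}\mathbb{L}$, which rescales both the spectrum and $\rho(\mathscr{L})$ by the factor $e^{c}$; so there is no loss of generality in assuming $\rho(\mathscr{L})=1$, in which case $\|\mathbb{L}\|_{\mathrm{op}}=1$ by Theorem \ref{Teo-Extension-L1} and it remains to show that $\mathrm{spec}(\mathbb{L})$ equals the closed unit disk $\overline{\mathbb{D}}$. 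The inclusion $\mathrm{spec}(\mathbb{L})\subseteq\overline{\mathbb{D}}$ is immediate from $\rho(\mathbb{L})\leqslant\|\mathbb{L}\|_{\mathrm{op}}=1$. Since a bounded operator and its Banach adjoint share the same spectrum, I would pass to $\mathbb{L}^{*}$ acting on $(L^{1}(\nu))^{*}=L^{\infty}(\nu)$ and identify it with a concrete, tractable operator.

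The key computation is the algebraic identity $\mathscr{L}\big((\varphi\circ\sigma)\,\psi\big)=\varphi\,\mathscr{L}\psi$, valid for $\varphi,\psi\in C(X)$ because $(\varphi\circ\sigma)(ax)=\varphi(x)$ pulls $\varphi(x)$ out of the integral in \eqref{def-Lf}. Integrating against the conformal measure $\nu$ and using $\int_{X}\mathscr{L}g\,d\nu=\int_{X}g\,d\nu$ gives $\langle\varphi,\mathbb{L}\psi\rangle_{\nu}=\langle\varphi\circ\sigma,\psi\rangle_{\nu}$ for all continuous $\varphi,\psi$. Fixing $\psi\in L^{1}(\nu)$, both $A\mapsto\int_{A}\mathbb{L}\psi\,d\nu$ and $A\mapsto\int_{\sigma^{-1}A}\psi\,d\nu$ are finite signed Borel measures on the compact metric space $X$ that agree on every continuous test function, hence are equal; this yields $\mathbb{L}^{*}\mathds{1}_{A}=\mathds{1}_{A}\circ\sigma$ for each $A\in\mathscr{B}(X)$. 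By linearity and the norm-density of simple functions in $L^{\infty}(\nu)$, I conclude that $\mathbb{L}^{*}$ is exactly the Koopman operator $U\varphi=\varphi\circ\sigma$ on $L^{\infty}(\nu)$, so the problem reduces to computing $\mathrm{spec}(U)$.

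Next I would show that $U$ is an isometry of $L^{\infty}(\nu)$ that is not surjective. For the isometry property, the identity above with $\psi=\mathds{1}$ gives $\nu(\sigma^{-1}N)=\int_{N}\mathbb{L}\mathds{1}\,d\nu$; since $\mathbb{L}\mathds{1}(x)=\int_{E}e^{f(ax)}\,dp(a)\geqslant e^{-\|f\|_{\infty}}>0$, a $\nu$-null set pulls back to a null set and conversely, whence $\esssup_{\nu}|\varphi\circ\sigma|=\esssup_{\nu}|\varphi|$, i.e. $\|U\varphi\|_{\infty}=\|\varphi\|_{\infty}$. For non-surjectivity it suffices to exhibit a nonzero element of $\ker\mathbb{L}$: then $0\in\mathrm{spec}(\mathbb{L})=\mathrm{spec}(U)$, and since an isometry is bounded below it can fail to be invertible only by failing to be onto. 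Choosing disjoint open sets $V_{1},V_{2}\subseteq E$ with $p(V_{i})>0$ (possible as $\mathrm{supp}(p)=E$ is nondegenerate) and setting $\psi(x)=\big(\int_{V_{2}}e^{f(a\sigma x)}dp(a)\big)\mathds{1}_{V_{1}}(x_{1})-\big(\int_{V_{1}}e^{f(a\sigma x)}dp(a)\big)\mathds{1}_{V_{2}}(x_{1})$, a direct computation using $\sigma(ax)=x$ gives $\mathbb{L}\psi\equiv 0$, while $\psi\not\equiv 0$ in $L^{1}(\nu)$ because $\nu$ is fully supported (Theorem \ref{Teo-EP-fully-supp}) and the coefficients are bounded away from $0$.

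Finally, with $U$ a non-surjective isometry, I would deduce $\mathrm{spec}(U)=\overline{\mathbb{D}}$. For $|\lambda|<1$ the bound $\|(U-\lambda)\varphi\|_{\infty}\geqslant(1-|\lambda|)\|\varphi\|_{\infty}$ shows that $U-\lambda$ is injective with closed range, hence upper semi-Fredholm with nullity $0$; by the stability of the Fredholm index under small perturbations, the deficiency $\beta(\lambda)=\mathrm{codim}\,\mathrm{range}(U-\lambda)$ is locally constant, hence constant on the connected open disk and equal to $\beta(0)=\mathrm{codim}\,\mathrm{range}(U)\geqslant 1$. Thus $U-\lambda$ is never surjective for $|\lambda|<1$, so the open disk lies in $\mathrm{spec}(U)$, and taking the closure gives $\overline{\mathbb{D}}$; combined with $\mathrm{spec}(U)\subseteq\overline{\mathbb{D}}$, $\mathrm{spec}(\mathbb{L})=\mathrm{spec}(U)$, and undoing the normalization, this yields $\mathrm{spec}(\mathbb{L})=\{\lambda:|\lambda|\leqslant\rho(\mathscr{L})\}$. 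I expect the main obstacle to be this last spectral step—controlling the deficiency of $U-\lambda$ uniformly over the disk via semi-Fredholm perturbation theory—together with the measure-theoretic identification $\mathbb{L}^{*}=U$ in the uncountable-alphabet setting, where $C(X)$ is \emph{not} norm-dense in $L^{\infty}(\nu)$, so one must argue through indicator functions and the fact that finite Borel measures on $X$ are determined by their action on $C(X)$.
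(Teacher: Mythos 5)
Your proposal is correct, and it shares the paper's high-level strategy: normalize so that $\rho(\mathscr{L})=1$, pass to the Banach adjoint $\mathbb{L}^{*}$ on $L^{\infty}(\nu)$, identify it with the Koopman operator $\psi\mapsto\psi\circ\sigma$ (the paper gets this from Proposition \ref{prop-extensao-l1}; your derivation via $\mathscr{L}((\varphi\circ\sigma)\psi)=\varphi\,\mathscr{L}\psi$ and regularity of Borel measures is a clean way to justify it in the uncountable-alphabet setting, and your verification that $\sigma$ preserves $\nu$-null sets in both directions is exactly the point that makes the Koopman operator well defined and isometric on $L^{\infty}(\nu)$-classes), and then show that $\mathbb{L}^{*}-\lambda I$ fails to be onto for $|\lambda|<1$. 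Where you genuinely diverge is in the last step. The paper argues concretely: it picks disjoint cylinders $A,B$ with $\sigma(A)=X$ and shows by an $\esssup$ estimate that the equation $\psi\circ\sigma-\lambda\psi=\mathds{1}_{B}$ has no solution in $L^{\infty}(\nu)$, i.e.\ it exhibits a specific element outside the range. You instead establish two structural facts --- $\ker\mathbb{L}\neq\{0\}$ via an explicit two-cylinder combination, hence $U=\mathbb{L}^{*}$ is a non-surjective isometry --- and then invoke the constancy of the deficiency of the semi-Fredholm operators $U-\lambda$ over the connected disk. Both are sound; the paper's route is more elementary and self-contained, while yours buys extra information (nontriviality of $\ker\mathbb{L}$, the isometry property of $\mathbb{L}^{*}$, and uniform lower bounds $\|(U-\lambda)\varphi\|_{\infty}\geqslant(1-|\lambda|)\|\varphi\|_{\infty}$ showing that no $|\lambda|<1$ is even an approximate eigenvalue of $U$). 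If you want to avoid quoting semi-Fredholm perturbation theory, note that the same conclusion follows more cheaply from the standard fact that the boundary of the spectrum lies in the approximate point spectrum: your lower bound excludes every $|\lambda|<1$ from the approximate point spectrum of $U$, while $0\in\mathrm{spec}(U)$, so the spectrum (a compact set) must contain the whole closed disk. One small caveat, shared with the paper's own proof: the statement implicitly requires $E$ to contain at least two points, since otherwise the disk degenerates.
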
 

\begin{proof}
	Without loss of generality we can assume that $\rho(\mathscr{L})=1$. 
	Therefore the spectral radius of $\mathbb{L}:L^1(\nu)\to L^1(\nu)$ is also equal to one.
	Since $\mathrm{spec}(\mathbb{L}) = \mathrm{spec}(\mathbb{L^*})$ and the spectrum of 
	a bounded operator is a closed subset of the complex plane, it is enough to show that 
	$\{\lambda\in \mathbb{C}: |\lambda|<1\}\subset \mathrm{spec}(\mathbb{L}^{*})$. And 
	this is a consequence of the operator $\mathbb{L}^{*}-\lambda\mathrm{I}$ to be not onto whenever $|\lambda|<1$,
	hence it is not be invertible.  
	
	The main idea is to show that $\mathrm{Im}(\mathbb{L}^{*}-\lambda\mathrm{I})$ can not contains an 
	essentially bounded measurable function
	which is positive on a set $A$, with $\nu(A)>0$ and $\sigma(A)=X$; and identically 
	zero on a set $B$, which is disjoint from $A$ and also have positive measure $\nu(B)>0$.     
	
	Let us first construct the sets $A$ and $B$. They can be chosen as two disjoint open cylinder sets of the following
	form.  
	Take two distinct points $a,b\in E$ and $0<r < (1/2)d_{E}(a,b)$. 
	Define $A\equiv  B_{E}(a,r)\times E^{\mathbb{N}}$ and $B\equiv B_{E}(b,r)\times E^{\mathbb{N}}$.
	Since $A$ and $B$ are open sets of $X$ it follows from Theorem \ref{Teo-EP-fully-supp} that
	$\nu(A),\nu(B)>0$. By construction $\sigma(A)=X$ and $A\cap B = \emptyset$. 
	
	Let $\lambda\in\mathbb{C}$ be such that $|\lambda|<1$ and suppose by contradiction  
	that there is some complex function 
	$\psi = |\psi|\exp(i\arg(\psi))$
	such that 
	\[
	\mathbb{L}^{*}\psi -\lambda \psi = \mathds{1}_{B}.
	\]
	Since $\nu(B)>0$ it follows that $\psi$ can not be identically zero. 
	From Proposition \ref{prop-extensao-l1} we have that $\mathbb{L}^{*}\psi = \psi\circ\sigma$, $\nu$-almost everywhere. 
	Multiplying the above equation by $\mathds{1}_{A}$ we obtain the following identity
	\[
	\mathds{1}_{A}|\psi\circ \sigma|\exp(i\arg(\psi\circ\sigma)) -\lambda \mathds{1}_{A}|\psi|\exp(i\arg(\psi))= 0,
	\quad \nu-a.e.
	\]
	Therefore there is a measurable subset $X'\subset X$ such that $\nu(X')=1$ and the above equality
	holds for every $x\in X'$. By taking the modulus on the last expression and after the essential supremum we
	get
	\[
	\esssup_{x\in X'\cap A} |\psi\circ \sigma(x)|
	\leqslant |\lambda| \esssup_{x\in X'\cap A}|\psi(x)|
	\leqslant |\lambda|\esssup_{x\in X}|\psi(x)| 
	\equiv |\lambda|\|\psi\|_{\infty}.
	\]
	By the definition of $A$, we have $\sigma(X'\cap A)=\sigma(X')$. 
	From Proposition \ref{mu_f satisfies C1} it follows that $\sigma(X')$ is contains a set of
	$\nu$-measure one. Therefore it follows from the definition of 
	essential supremum that the left hand side above
	is equal to $\|\psi\|_{\infty}$. 
	But, this implies that $1\leqslant |\lambda|$ which is an absurd. 
\end{proof}

\section*{Acknowledgments}
The authors thanks Aernout van Enter, Artur Lopes, Daniel Takahashi and Paulo Varandas for their helpful comments, suggestions and references.

\bibliographystyle{plain}
\bibliography{references}

\end{document}